\documentclass[reqno,11pt]{amsart}
\usepackage{amssymb}
\usepackage{verbatim}
\newif\ifpdf
\ifpdf
  \usepackage[pdftex]{graphicx}
  \usepackage[pdftex]{hyperref}
\else
  \usepackage{graphicx}
\fi
%
%
\textwidth=135mm
\textheight=210mm

\numberwithin{equation}{section}       
\setcounter{secnumdepth}{4}

 \theoremstyle{plain}    
 \newtheorem{thm}{Theorem}[section]
 \numberwithin{equation}{section} 
 \numberwithin{figure}{section} 
 \theoremstyle{plain}
 \theoremstyle{plain}    
 \newtheorem{cor}[thm]{Corollary} 
 \theoremstyle{plain}    
 \newtheorem{prop}[thm]{Proposition} 
 \theoremstyle{plain}    
 \newtheorem{lem}[thm]{Lemma} 
 \theoremstyle{remark}
 \newtheorem{rem}[thm]{Remark}
 \theoremstyle{definition}
 
\theoremstyle{definition}
\newtheorem{defi}[thm]{Definition}
\newtheorem*{thmA}{Theorem A} 
\newtheorem*{thmB}{Theorem B} 
\newtheorem*{thmC}{Theorem C} 

\newtheorem*{corA}{Corollary A}

\newtheorem*{corC}{Corollary C}
\newtheorem*{corD}{Corollary D}

\newtheorem*{ackn}{Acknowledgement}

\newcommand{\C}{{\mathbb{C}}}

\newcommand{\Q}{{\mathbb{Q}}}
\newcommand{\R}{{\mathbb{R}}}

\newcommand{\PP}{{\mathbb{P}}}

\newcommand{\cB}{{\mathcal{B}}}

\newcommand{\cD}{{\mathcal{D}}}
\newcommand{\cE}{{\mathcal{E}}}

\newcommand{\cL}{{\mathcal{L}}}

\newcommand{\cO}{{\mathcal{O}}}

\newcommand{\cT}{{\mathcal{T}}}

\newcommand{\cP}{{\mathcal{P}}}

\renewcommand{\b}{\beta}

\newcommand{\de}{\delta}
\newcommand{\e}{\varepsilon}

\newcommand{\MA}{\mathrm{MA}\,}

\newcommand{\vol}{\operatorname{vol}}

\newcommand{\supp}{\operatorname{supp}}

\newcommand{\Herm}{\operatorname{Herm}}

\newcommand{\ev}{\mathrm{ev}}
\newcommand{\eq}{{\mu_\mathrm{eq}}}
\newcommand{\eneq}{{\cE_\mathrm{eq}}}

%
%

\begin{document}

\setcounter{tocdepth}{1}

\title[Fekete points]{Fekete points and convergence towards equilibrium measures on complex
manifolds}

\date{\today{}}

\author{Robert Berman, S{\'e}bastien Boucksom, David Witt Nystr{\"o}m}

\address{Chalmers University of Technology and the University of G{\"o}teborg\\
Department of Mathematics
 SE-412 96 G{\"o}teborg\\
 Sweden}

\email{robertb@chalmers.se}

\address{CNRS-Universit{\'e} Paris 7\\
 Institut de Math{\'e}matiques\\
 F-75251 Paris Cedex 05\\
 France}

\email{boucksom@math.jussieu.fr}

\address{Chalmers University of Technology and the University of G{\"o}teborg\\
Department of Mathematics
 SE-412 96 G{\"o}teborg\\
 Sweden}

\email{wittnyst@chalmers.se}

\begin{abstract}
Building on \cite{BB08a}, we prove a general criterion for convergence
of (possibly singular) Bergman measures towards equilibrium measures
on complex manifolds. The criterion may be formulated in terms of
growth properties of balls of holomorphic sections, or equivalently
as an asymptotic minimization of generalized Donaldson $L$-functionals.
Our result yields in particular the proof of a well-known
conjecture in pluripotential theory concerning the equidistribution
of Fekete points, and it also gives the convergence of Bergman measures
towards equilibrium for Bernstein-Markov measures. The present paper
therefore supersedes our preprints \cite{BB08b,BWN08}. Applications
to interpolation of holomorphic sections are also discussed. 
\end{abstract}

\maketitle

\tableofcontents{}

\section*{Introduction}

\subsection{The setting}

Let $L$ be a holomorphic line bundle over a compact complex manifold
$X$ of complex dimension $n$. Following \cite{BB08a}, let $(K,\phi)$
be a \emph{weighted compact subset}, i.e. a non-pluripolar compact
subset $K$ of $X$ together with the weight $\phi$ of a continuous
Hermitian metric $e^{-\phi}$ on the restriction $L|_{K}$. Finally
let $\mu$ be a probability measure on $K$.

The asymptotic study as $k\rightarrow\infty$ of the space of global
sections $s\in H^{0}(X,kL)$ endowed with either the $L^{2}$ norm
$$
\Vert s\Vert_{L^{2}(\mu,k\phi)}^{2}:=\int_{X}|s|^{2}e^{-2k\phi}d\mu
$$
 or the $L^{\infty}$ norm 
 $$
\Vert s\Vert_{L^{\infty}(K,k\phi)}:=\sup_{K}|s|e^{-k\phi}
$$
 is a natural generalization of the classical theory of orthogonal
polynomials. The latter indeed corresponds to the case 
$$
K\subset\C^{n}\subset\PP^{n}=:X
$$
equipped with the tautological ample bundle $\cO(1)=:L$. It is of
course well-known that $H^{0}(\PP^{n},\cO(k))$ identifies with the
space of polynomials on $\C^{n}$ of total degree at most $k$. The
section of $L$ cutting out the hyperplane at infinity induces a flat
Hermitian metric on $L$ over $\C^{n}$, so that a continuous weight
$\phi$ on $L|_{K}$ is naturally identified with a function in $C^{0}(K)$.
On the other hand, a psh function on $\C^{n}$ with at most logarithmic
growth at infinity gets identified with the weight $\phi$ of a non-negatively
curved (singular) Hermitian metric on $L$, which will thus be referred
to as a \emph{psh weight}.

Our geometric setting is therefore seen to be a natural (and more
symmetric) extension of so-called \emph{weighted potential theory}
in the classical case (cf. \cite{ST97} and in particular Bloom's
appendix therein). It also contains the case of \emph{spherical
polynomials} on the round sphere $S^{n}\subset\R^{n+1}$, as studied
e.g.~in \cite{Mar07,MOC08,SW04} (we are grateful to N.Levenberg
for pointing this out). Indeed, the
space of spherical polynomials of total degree at most $k$ is by
definition the image of the restriction to $S^{n}$ of the space of all
polynomials on $\R^{n+1}$ of degree at most $k$. It thus coincides with (the real points of) $H^{0}(X,kL)$
with $X$ being the smooth quadric hypersurface 
$$
\{X_{1}^{2}+..+X_{n}^{2}=X_{0}^{2}\}\subset\PP^{n+1}
$$
endowed with the ample line bundle $L:=\cO(1)|_{X}$. Here we take
$K:=S^{n}=X(\R)$, and the section cutting out the hyperplane at infinity
again identifies weights on $L$ with certain functions on the affine
piece of $X$.

In view of the above dictionary, one is naturally led to introduce
the \emph{equilibrium weight} of $(K,\phi)$ as 

\begin{equation}\label{equ:equilib}
\phi_{K}:=\sup\left\{ \psi\,\text{ psh weight on }L,\,\psi\leq\phi\,\,\text{ on }K\right\},\end{equation}
 whose upper semi-continuous regularization $\phi_{K}^{*}$ is a psh
weight on $L$ since $K$ is non-pluripolar (cf.~Section \ref{sec:polar}).

The \emph{equilibrium measure} of $(K,\phi)$ is then defined as the
Monge-Amp{\`e}re measure of $\phi_{K}^{*}$ normalized to unit mass: 
$$
\eq(K,\phi):=M^{-1}\MA(\phi_{K}^{*}).
$$
 This measure is concentrated on $K$, and we have $\phi=\phi_{K}^{*}$
a.e. with respect to it.

This approach is least technical when $L$ is \emph{ample}, but the
natural setting appears to be the more general case of a \emph{big}
line bundle, which is the one considered in the present paper, following
our preceding work \cite{BB08a}. As was shown there, the Monge-Amp{\`e}re
measure $\MA(\psi)$ of a psh weight $\psi$ with minimal singularities,
defined as the Beford-Taylor top-power $(dd^{c}\psi)^{n}$ of the
curvature $dd^{c}\psi$ on its bounded locus, is well-behaved. Its
total mass $M$ is in particular an invariant of the big line bundle
$L$, and in fact coincides with the \emph{volume} $\vol(L)$, characterized
by 
$$
N_k:=\dim H^0(kL)=\vol(L)\frac{k^n}{n!}+o(k^n).
$$

The main goal of the present paper is to give a general criterion
involving spaces of global sections that ensures convergence of certain
sequences of probability measures on $K$ towards the equilibrium
measure $\eq(K,\phi)$.

\subsection{Fekete configurations}

Let $(K,\phi)$ be a weighted compact subset as above. A \emph{Fekete
configuration} is a finite subset of points maximizing the determinant
in the interpolation problem. More precisely, let $N:=\dim H^0(L)$ and 
$$
P=(x_1,...,x_N)\in K^N
$$
be a configuration of points in the given compact subset $K$. Then
$P$ is said to be a Fekete configuration for $(K,\phi)$ if it maximizes
the determinant of the evaluation operator

\begin{equation}\label{equ:ev}
\ev_P:H^0(L)\rightarrow\oplus_{j=1}^N L_{x_j}
\end{equation}
with respect to a given basis $s_1,...,s_N$ of $H^0(L)$,
i.e. the Vandermonde-type determinant 
$$
\left|\det(s_{i}(x_{j}))\right|e^{-\left(\phi(x_{1})+...+\phi(x_{n})\right)}.
$$
This condition is independent of the choice of the basis $(s_j)$.

If $P=(x_1,...,x_N)\in X^N$ is a configuration, then we let
$$
\delta_P:=\frac{1}{N}\sum_{j=1}^N\delta_{x_j}
$$
 be the averaging measure along $P$. Our first main result is an equidistribution
result for Fekete configurations.

\begin{thmA} For each $k$ let $P_k\in K^{N_k}$ be a Fekete configuration for $(K,k\phi)$. Then the sequence $P_k$ equidistributes towards the equilibrium measure as $k\to\infty$, that is
$$
\lim_{k\to\infty}\delta_{P_k}=\eq(K,\phi)
$$
in the weak topology of measures.
\end{thmA}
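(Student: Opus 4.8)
The plan is to derive Theorem~A from the general convergence criterion of the present paper, which in this instance amounts to a variational comparison between Fekete configurations and the \emph{energy at equilibrium} functional $\eneq$, combined with the differentiability of the latter.

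For a continuous weight $\psi$ on $L|_K$ fix a basis $s_1,\dots,s_{N_k}$ of $H^0(kL)$ and set
\begin{equation*}
\cD_k(\psi):=\frac{1}{kN_k}\log\,\sup_{P\in K^{N_k}}\Big(\big|\det(s_i(x_j))\big|\,e^{-k\sum_{j=1}^{N_k}\psi(x_j)}\Big),
\end{equation*}
where $x_1,\dots,x_{N_k}$ denote the components of $P$, so that a Fekete configuration $P_k$ for $(K,k\phi)$ realizes the supremum at $\psi=\phi$; changing the basis shifts $\cD_k$ by an additive constant that will be irrelevant. Two facts link $\cD_k$ to pluripotential theory. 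First, elementary linear algebra identifies $\cD_k(\psi)-\cD_k(\psi')$, up to an error $O(k^{-1}\log N_k)=o(1)$, with the normalized logarithm of the ratio of the volumes of the unit balls of $\|\cdot\|_{L^\infty(K,k\psi')}$ and $\|\cdot\|_{L^\infty(K,k\psi)}$ in $H^0(kL)$: the evaluation map $\ev_{P_k}$ carries the $L^\infty$-unit ball into the unit polydisc, while the maximality defining $P_k$ forces a uniform sup-norm bound on the Lagrange interpolation sections, hence a Bernstein--Markov inequality with subexponential constant $N_k$, which gives the reverse inclusion up to dilation by $N_k^{-1}$. Second, the main asymptotic theorem of \cite{BB08a}, in the $L^\infty$-form worked out here, then yields, for any continuous weights $\psi,\psi'$,
\begin{equation*}
\lim_{k\to\infty}\big(\cD_k(\psi)-\cD_k(\psi')\big)=\eneq(K,\psi')-\eneq(K,\psi),
\end{equation*}
where $\eneq(K,\cdot)$ is normalized so that its G\^ateaux differential at $\psi$ along $u\in C^0(K)$ is the probability measure $\eq(K,\psi)$; that this differential exists and has this value is again a theorem of \cite{BB08a}, resting on the orthogonality relation $\psi=\psi_K^*$ holding $\eq(K,\psi)$-a.e.\ recalled above.

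The variational step is then immediate. Fix $u\in C^0(K)$ and substitute the Fekete configuration $P_k=(x_1,\dots,x_{N_k})$ for $(K,k\phi)$ into the supremum defining $\cD_k(\phi+u)$:
\begin{equation*}
\cD_k(\phi+u)-\cD_k(\phi)\ \ge\ -\frac{1}{N_k}\sum_{j=1}^{N_k}u(x_j)\ =\ -\int_K u\,d\delta_{P_k}.
\end{equation*}
By weak compactness of probability measures on $K$, any subsequence of $(\delta_{P_k})$ has a further subsequence converging weakly to some probability measure $\nu$ on $K$; letting $k\to\infty$ along it and using the convergence just stated gives
\begin{equation*}
\eneq(K,\phi+u)-\eneq(K,\phi)\ \le\ \int_K u\,d\nu\qquad\text{for all }u\in C^0(K).
\end{equation*}
Replacing $u$ by $tu$, dividing by $t$, and letting $t\to0^+$, resp.\ $t\to0^-$, bounds $\int_K u\,d\nu$ between the two one-sided derivatives at $0$ of $t\mapsto\eneq(K,\phi+tu)$; by differentiability these coincide and equal $\int_K u\,d\eq(K,\phi)$. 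Hence $\int_K u\,d\nu=\int_K u\,d\eq(K,\phi)$ for every $u\in C^0(K)$, that is $\nu=\eq(K,\phi)$. Since every weak limit point of $(\delta_{P_k})$ equals $\eq(K,\phi)$, the whole sequence converges to it, which is Theorem~A.

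The entire difficulty is concentrated in the two inputs drawn from \cite{BB08a}: the existence of the limiting (``transfinite diameter'') functional --- equivalently, the precise first-order asymptotics of the $L^\infty$-ball volumes --- and the differentiability of $\eneq(K,\cdot)$ with differential $\eq(K,\cdot)$. These encapsulate the pluripotential theory of the Monge--Amp\`ere energy and of the envelope operator $\psi\mapsto\psi_K^*$, and the real work is to make them available for a \emph{big}, rather than merely ample, line bundle. On the Fekete side proper the only new ingredient is the subexponential two-sided comparison of the maximal Vandermonde determinant with the $L^\infty$-unit-ball volume, in which the maximality of Fekete points enters exactly through the uniform control of the Lagrange interpolation sections.
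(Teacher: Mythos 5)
Your argument is correct, and it reaches Theorem A by a more direct route than the paper does. The paper deduces Theorem A from its general criterion (Theorem C): it first shows that $\cL_k(\delta_{P_k},k\phi)\to\eneq(K,\phi)$, using the transfinite-diameter theorem of \cite{BB08a} together with the $L^2$-ball formulas (Proposition \ref{prop:formules}), Lemma \ref{lem:vol} and a Bernstein--Markov reference measure $\mu_0$, and then applies Theorem C combined with the identity $\beta(\delta_P,\phi)=\delta_P$ (Proposition \ref{prop:balanced}); Theorem C itself is proved by differentiating the concave functions $t\mapsto\cL_k(\mu_k,\phi+tv)$, whose derivatives are the Bergman measures, and invoking Lemma \ref{lem:elementary}. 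You instead work directly with the weighted Vandermonde maximum $\cD_k(\psi)$: since it is a supremum of functionals affine in the weight, Fekete maximality immediately gives the subgradient inequality $\cD_k(\phi+u)-\cD_k(\phi)\ge -\int_K u\,d\delta_{P_k}$, and you conclude from the convergence of $\cD_k$-differences to $\eneq$-differences plus the differentiability of $\eneq$ (Theorem \ref{thm:diff}). The underlying variational principle (convergence of values plus differentiability of the limit forces convergence of derivatives, respectively subgradients) is the same as the paper's Lemma \ref{lem:elementary}, and both arguments rest on exactly the two inputs from \cite{BB08a} that you identify; but you bypass the Bergman-measure formalism, the $L^2$-ball computations and the reference measure $\mu_0$ entirely, replacing them by the comparison of the maximal Vandermonde with the $L^\infty$-ball volume via the Lagrange interpolation bound (essentially Proposition \ref{pro:dist for fekete}, which the paper only uses later, for Corollary C) --- or one could simply quote the transfinite-diameter theorem of \cite{BB08a} in place of that comparison. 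What the paper's longer route buys is generality: Theorem C also yields Theorem B and Corollaries C and D, whereas your argument is tailored to Fekete configurations. Two small points you should make explicit: the Fekete supremum is strictly positive (so $\ev_{P_k}$ is invertible and the Lagrange sections are defined), which follows from $K$ being non-pluripolar; and the $N_k^{2N_k}$ discrepancy in the volume comparison (real dimension $2N_k$) is what produces your stated $O(k^{-1}\log N_k)=o(1)$ error, which is indeed harmless since $N_k=O(k^n)$.
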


Theorem A first appeared in the first two named authors' preprint \cite{BB08b}.
It will be obtained here as a consequence of a more general convergence
result (Theorem C below).

In $\C$ this result is well-known (cf. \cite{ST97} for a modern
reference and \cite{Dei99} for the relation to Hermitian random matrices).
In $\C^{n}$ this result has been conjectured for quite some time,
probably going back to the pioneering work of Leja in the late 50's.
See for instance Levenberg's survey on approximation theory in $\C^{n}$ \cite{Lev06},
p.29 and the appendix by Bloom in \cite{ST97}.

As explained above, the spherical polynomials situation corresponds
to the round sphere $S^{n}$ embedded in its complexification, the
complex quadric hypersurface in $\PP^{n+1}$. This special case of
Theorem A thus yields:

\begin{corA} Let $K\subset S^n$ be a compact subset of the round \(n\)-sphere, and for each \(k\) let \(P_k\in K^{N_k}\) be Fekete configuration of degree \(k\) for \(K\) (also called \emph{extremal fundamental system} in this setting). Then \(\delta_{P_k}\) converges to the equilibrium measure \(\eq(K)\) of \(K\). 
\end{corA}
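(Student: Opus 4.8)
The plan is to exhibit Corollary~A as a special case of Theorem~A, by unwinding the dictionary between spherical potential theory and the geometric framework of the present paper that was sketched in the introduction. First I would fix the geometric data: let $X\subset\PP^{n+1}$ be the smooth complex quadric whose real locus $X(\R)$ is the round sphere $S^n$ (the real points with $X_0\neq 0$, normalized to $X_0=1$), let $L:=\cO(1)|_X$, which is ample, and let $\sigma\in H^0(X,L)$ be the section induced by the linear form $X_0$, cutting out the intersection of $X$ with the hyperplane at infinity; then $\sigma$ is nowhere vanishing along $S^n$. Choosing the continuous (indeed smooth) weight $\phi$ on $L$ over a neighbourhood of $S^n$ normalized by $|\sigma|\,e^{-\phi}\equiv 1$ on the affine chart $\{X_0\neq 0\}$, the trivialization $s\mapsto s/\sigma^{\otimes k}$ identifies $H^0(X,kL)$ with the space of spherical polynomials of degree $\le k$ on $S^n$ and $|s|\,e^{-k\phi}$ with the absolute value of the associated polynomial on $S^n$. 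In particular $N_k=\dim H^0(X,kL)$ is the dimension of that space, matching the definition of an extremal fundamental system of degree $k$.

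Second, I would check that $(K,\phi)$ is a weighted compact subset in the sense of the paper, the only nontrivial point being that $K$ is non-pluripolar in $X$. Since $S^n=X(\R)$ is a totally real submanifold of $X$ of maximal dimension $n$, a compact subset of $S^n$ is non-pluripolar in the complex manifold $X$ as soon as it is non-thin (equivalently, of positive capacity) for the potential theory on $S^n$ underlying the definition of $\eq(K)$ — a hypothesis implicit in that very definition. Granting this, the equilibrium weight $\phi_K^*$ and the equilibrium measure $\eq(K,\phi)=M^{-1}\MA(\phi_K^*)$ are defined as in \eqref{equ:equilib} and the definitions immediately following it, and the dictionary of the introduction identifies $\phi_K^*$ with the sphere extremal function of $K$, hence $\eq(K,\phi)$ with the equilibrium measure $\eq(K)$ of $K$ in the spherical sense.

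Third, I would match the two notions of Fekete configuration. An extremal fundamental system of degree $k$ for $K$ is, by definition, an $N_k$-tuple $(x_1,\dots,x_{N_k})\in K^{N_k}$ maximizing the Vandermonde determinant $|\det(p_i(x_j))|$ over a real basis $(p_i)$ of spherical polynomials of degree $\le k$; under the identifications above this is exactly the quantity $|\det(s_i(x_j))|\,e^{-k(\phi(x_1)+\dots+\phi(x_{N_k}))}$ whose maximization over $K^{N_k}$ defines a Fekete configuration for $(K,k\phi)$, and since $K\subset S^n$ already consists of real points there is no discrepancy between real and complex configurations. Hence each $P_k$ is a Fekete configuration for $(K,k\phi)$, and Theorem~A gives $\delta_{P_k}\to\eq(K,\phi)=\eq(K)$ in the weak topology, which is the claim.

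I expect the genuinely non-formal step to be the second one: establishing non-pluripolarity of $K$ in $X$ from the (implicit) thinness hypothesis on $K\subset S^n$, and pinning down the precise correspondence between the classical sphere potential theory and the intrinsic pluripotential theory on $X$ that identifies $\eq(K,\phi)$ with $\eq(K)$. Once the geometric data is set up correctly, everything else is bookkeeping.
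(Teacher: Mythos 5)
Your proposal is correct and follows essentially the same route as the paper: Corollary~A is obtained there simply by specializing Theorem~A to the quadric $X\subset\PP^{n+1}$ with $L=\cO(1)|_X$, $K\subset S^n=X(\R)$, and the flat weight induced by the section cutting out the hyperplane at infinity, exactly the dictionary you spell out (including the identification of extremal fundamental systems with Fekete configurations and the implicit non-pluripolarity of $K$ needed for $\eq(K)$ to be defined).
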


This is a generalization of the recent result of Morza and Ortega-Cerd{\`a} \cite{MOC08}
on equidistribution of Fekete points on the sphere. Their result corresponds
to the case $K=S^{n}$ whose equilibrium measure $\eq(S^{n})$ coincides
with the rotationally invariant probability measure on $S^{n}$ for
symmetry reasons.

\subsection{Bernstein-Markov measures}

Let as before $(K,\phi)$ be a weighted compact subset, and let $\mu$
be a probability measure on $K$. The distortion between the natural
$L^{2}$ and $L^{\infty}$ norms on $H^{0}(L)$ introduced above is
locally acounted for by the \emph{distortion function} $\rho(\mu,\phi)$,
whose value at $x\in E$ is defined by \begin{equation}
\rho(\mu,\phi)(x)=\sup_{\Vert s\Vert_{L^{2}(\mu,\phi)}=1}|s(x)|^{2}e^{-2\phi(x)},\label{equ:distortion}\end{equation}
 the squared norm of the evaluation operator at $x$.

The function $\rho(\mu,\phi)$ is known as the \emph{Christoffel-Darboux
function} in the orthogonal polynomials literature and may also be
represented as \begin{equation}
\rho(\mu,\phi)(x)=\sum_{i=1}^{N}|s_{i}(x)|^{2}e^{-2\phi(x)}\label{eq:rho in base}\end{equation}
 in terms of any given orthonormal base $(s_{i})$ for $H^{0}(L)$
wrt the $L^{2}-$norm induced by $(\mu,\phi).$ In this latter form,
it sometimes also appears under the name \emph{density of states function}.
Integrating (\ref{eq:rho in base}) shows that the corresponding \emph{probability}
measure \begin{equation}
\beta(\mu,\phi):=N^{-1}\rho(\mu,\phi)\mu,\label{equ:bergmes}\end{equation}
 which will be referred to as the \emph{Bergman measure}, can indeed
be interpreted as a dimensional density for $H^{0}(L)$.

When $\mu$ is a smooth positive volume form on $X$ and $\phi$ is
smooth and strictly psh, the celebrated Bouche-Catlin-Tian-Zelditch
theorem (\cite{Bou90,Cat99,Tia90,Zel98}) asserts that $\beta(\mu,k\phi)$
admits a full asymptotic expansion in the space of smooth volume forms
as $k\rightarrow\infty$, with $M^{-1}(dd^{c}\phi)^{n}$ as the dominant
term.

As was shown by the first named author (in \cite{Ber07a} for the
$\PP^{n}$ case and in \cite{Ber07b} for the general case), part
of this result still holds when $\mu$ is a smooth positive volume
form and $\phi$ is smooth but without any \emph{a priori} curvature
sign. More specifically, the norm distortion still satisfies \begin{equation}
\sup_{X}\rho(\mu,k\phi)=O(k^{n})\label{equ:berg1}\end{equation}
 and the Bergman measures still converge towards the equilibrium measure:
\begin{equation}
\lim_{k\rightarrow\infty}\beta(\mu,k\phi)=\eq(X,\phi)\label{equ:berg2}\end{equation}
 now in the weak topology of measures.

Both of these results fail when $K,\mu$ and $\phi$ are more general.
However \emph{sub-exponential} growth of the distortion between $L^{2}(\mu,k\phi)$
and $L^{\infty}(K,k\phi)$ norms, that is \begin{equation}
\sup_{K}\rho(\mu,k\phi)=O(e^{\e k})\,\,\mathrm{for}\,\,\mathrm{all}\,\,\e>0,\label{equ:berg3}\end{equation}
 appears to be a much more robust condition. Following a standard
terminology (cf. \cite{NZ83} and \cite{Lev06} p.120), the measure
$\mu$ will be said to be \emph{Bernstein-Markov} for $(K,\phi)$
when (\ref{equ:berg3}) holds.

When $K=X$, any measure with continuous positive density is Bernstein-Markov
for $(X,\phi)$ by the mean-value inequality. Generalizing classical
results of Nguyen-Zeriahi and Siciak (\cite{NZ83,Sic88}), we give
more generally in Section \ref{sec:BM} a characterization of Bernstein-Markov
measures showing that Bernstein-Markov measures for $(K,\phi)$ abound
when the latter is \emph{regular} in the sense of pluripotential theory,
i.e.~when $\phi_{K}$ is usc. For instance any smoothly bounded
domain $K$ in $X$ is regular, and we show that the equilibrium measure
of $(K,\phi)$ as well as any measure with support equal to $K$ is
Bernstein-Markov. 

Our second main result asserts that convergence of Bergman measures
to equilibrium as in (\ref{equ:berg2}) holds for arbitrary Bernstein-Markov
measures. 

\begin{thmB} Let \(\mu\) be a Bernstein-Markov measure for \((K,\phi)\). Then 
\[\lim_{k\to\infty}\beta(\mu,k\phi)=\eq(K,\phi)\]
in the weak topology of measures.
\end{thmB}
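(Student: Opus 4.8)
The plan is to reduce the convergence of Bergman measures to the more general convergence criterion established in the paper (Theorem C), by exhibiting the Bergman measures $\beta(\mu,k\phi)$ as a family of probability measures whose associated $L$-functionals are asymptotically minimized. Concretely, the key identity is that for each $k$ the Bergman measure is the push-forward under the evaluation/orthonormalization procedure of a measure naturally attached to the $L^2$-geometry of $H^0(kL)$, so that its asymptotic behaviour is governed by the partition-function-type quantities $\log\det$ of the Gram matrices of $H^0(kL)$ with respect to $L^2(\mu,k\phi)$ and $L^\infty(K,k\phi)$. The first step, then, is to record the standard reformulation: if $(s_i)$ is an $L^2(\mu,k\phi)$-orthonormal basis, then $\rho(\mu,k\phi)=\sum_i|s_i|^2e^{-2k\phi}$ pointwise, and the total mass calculation $\int_K\rho(\mu,k\phi)\,d\mu=N_k$ gives $\beta(\mu,k\phi)$ mass one; this is already quoted in the excerpt.

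Next I would bring in the Bernstein-Markov hypothesis \eqref{equ:berg3}. The point of sub-exponential distortion is that the $L^2(\mu,k\phi)$-norm and the $L^\infty(K,k\phi)$-norm on $H^0(kL)$ have the same asymptotic logarithmic behaviour at scale $k^{n+1}$: one has trivially $\Vert s\Vert_{L^2(\mu,k\phi)}\le\Vert s\Vert_{L^\infty(K,k\phi)}$ (as $\mu$ is a probability measure), and in the reverse direction the Bernstein-Markov property gives $\Vert s\Vert_{L^\infty(K,k\phi)}\le e^{\e k}\Vert s\Vert_{L^2(\mu,k\phi)}$ for every $\e>0$ and $k$ large. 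Consequently the two norms are equivalent up to a factor $e^{\e k}$, hence their unit balls $\cB^2_k$ and $\cB^\infty_k$ in $H^0(kL)$ satisfy $e^{-\e k}\cB^\infty_k\subset\cB^2_k\subset\cB^\infty_k$, and therefore the (normalized logarithms of the) volumes of these balls — equivalently the Donaldson-type $L$-functionals evaluated at the $L^2$-metric induced by $(\mu,k\phi)$ versus the $L^\infty$-metric induced by $(K,k\phi)$ — differ by $o(k^{n+1})$, i.e.\ by $O(\e k\cdot N_k)=O(\e k^{n+1})$, and $\e$ is arbitrary. In other words, the $L^2$-metric coming from a Bernstein-Markov measure is an asymptotic minimizer of the relevant $L$-functional relative to the sup-norm datum, which is precisely the hypothesis under which Theorem C asserts $\lim_k\beta(\mu,k\phi)=\eq(K,\phi)$.

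The final step is simply to invoke Theorem C: one checks that the sequence of probability measures $\beta(\mu,k\phi)$ satisfies the growth/minimization condition in its statement — this is exactly the content of the previous paragraph, via the comparison of ball volumes — and conclude weak convergence to $\eq(K,\phi)$. I expect the main obstacle to lie not in this reduction, which is formal once the right functional-analytic dictionary is in place, but in the precise bookkeeping that translates ``sub-exponential distortion of norms'' into ``$o(k^{n+1})$ error in the $L$-functional'': one must be careful that the distortion bound $\sup_K\rho(\mu,k\phi)=O(e^{\e k})$ controls the full Gram determinant (an $N_k\times N_k$ object) and not merely individual sections, which is where the factor $N_k\sim\vol(L)k^n/n!$ enters and forces the $k^{n+1}$ normalization. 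The other point requiring care is that $\rho(\mu,k\phi)$ is an $L^\infty$ quantity on $K$ while the measure $\beta(\mu,k\phi)$ is only controlled $\mu$-a.e., so one should phrase the comparison at the level of the integrated functionals $L_k$ rather than pointwise; once that is done, Theorem C applies verbatim.
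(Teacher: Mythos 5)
Your reduction is essentially the paper's own proof: the Bernstein--Markov bound $\sup_K\rho(\mu,k\phi)=O(e^{\e k})$ gives $\log\bigl(\vol\cB^2(\mu,k\phi)/\vol\cB^\infty(K,k\phi)\bigr)=o(kN_k)$ (this is the paper's Lemma \ref{lem:vol}), which is precisely the hypothesis of Theorem C applied to the constant sequence $\mu_k=\mu$, whose conclusion is the convergence of $\beta(\mu,k\phi)$ to $\eq(K,\phi)$. Two cosmetic slips that do not affect the argument: the ball inclusions should read $\cB^\infty(K,k\phi)\subset\cB^2(\mu,k\phi)\subset Ce^{\e k}\,\cB^\infty(K,k\phi)$ (you wrote them in the reverse order), and the minimization condition of Theorem C is to be verified for $\mu$ itself (as $\mu_k=\mu$), not for the Bergman measures $\beta(\mu,k\phi)$, as your second paragraph in fact correctly does.
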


In the classical one-variable setting, this theorem was obtained,
using completely different methods, by Bloom and Levenberg \cite{BL07a}, who also conjectured Theorem B in \cite{BL07b}.
A slightly less general version of Theorem B (dealing only with \emph{stably}
Bernstein-Markov measures) was first obtained in the first and third
named author's preprint \cite{BWN08}. Theorem B will here be obtained
as a special case of Theorem C below.

\subsection{Donaldson's $\cL$-functionals and general convergence criterion}

\label{sub:donald} We now state our third main result, which is a
general criterion ensuring convergence of Bergman measures to equilibrium
in terms of $\cL$-functionals, first introduced by Donaldson \cite{Don05a,Don05b}.
This final result actually implies Theorem A and B above, as well
as a convergence result for so-called \emph{optimal measures} first
obtained in \cite{BBLW08} by reducing the result to \cite{BB08b}.

The $L^{2}$ and $L^{\infty}$ norms on $H^{0}(kL)$ introduced above
are described geometrically by their unit balls, which will be denoted
respectively by \[
\cB^{\infty}(K,k\phi)\subset\cB^{2}(\mu,k\phi)\subset H^{0}(kL).\]
 We fix a reference weighted compact subset $(K_{0},\phi_{0})$, which
should be taken to be the compact torus endowed with the standard
flat weight in the classical $\C^{n}$ case. We can then normalize
the Haar measure $\vol$ on $H^{0}(kL)$ by \[
\vol\cB^{\infty}(K_{0},k\phi_{0})=1,\]
 and we introduce the following slight variants of Donaldson's $\cL$-functional \cite{Don05a}:
\[
\cL_{k}(\mu,\phi):=\frac{1}{2kN_{k}}\log\vol\cB^{2}(\mu,k\phi)\]
 and \[
\cL_{k}(K,\phi):=\frac{1}{2kN_{k}}\log\vol\cB^{\infty}(K,k\phi).\]
 Theorem A of \cite{BB08a} then reads \begin{equation}
\lim_{k\rightarrow\infty}\cL_{k}(K,\phi)=\eneq(K,\phi).\label{equ:BB}\end{equation}
 Here \[
\eneq(K,\phi):=M^{-1}\cE(\phi_{K}^{*})\]
 denotes the \emph{energy at equilibrium} of $(K,\phi)$ (with respect
to $(K_{0},\phi_{0})$). $\cE(\psi)$ stands for the \emph{Aubin-Mabuchi
energy} of a psh weight $\psi$ with minimal singularities, characterized
as the primitive of the Monge-Amp{\`e}re operator: \[
\frac{d}{dt}_{t=0_{+}}\cE(t\psi_{1}+(1-t)\psi_{2})=\int_{X}(\psi_{1}-\psi_{2})\MA(\psi_{2})\]
 normalized by \[
\cE(\phi_{0,E_{0}}^{*})=0.\]

Since $\cL_{k}(\mu,\phi)\geq\cL_{k}(K,\phi)$ for any probability
measure $\mu$ on $K$, (\ref{equ:BB}) shows in particular that the
energy at equilibrium $\eneq(K,\phi)$ is an \emph{a priori} asymptotic
lower bound for $\cL_{k}(\cdot,\phi)$. Our final result describes
what happens for asymptotically minimizing sequences:

\begin{thmC} Let \(\mu_k\) be a sequence of probability measures on \(K\) such that
\[\lim_{k\to\infty}\cL_k(\mu_k,\phi)=\eneq(K,\phi).\]
Then the associated Bergman measures satisfy
\[\lim_{k\to\infty}\beta(\mu_k,k\phi)=\eq(K,\phi)\]
in the weak topology of measures.
\end{thmC}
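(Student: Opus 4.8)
The plan is to deduce Theorem C from the equilibrium asymptotics \eqref{equ:BB} by a convexity/differentiation argument in the spirit of Donaldson's variational approach, exploiting the fact that $\cL_k(\cdot,\phi)$ is (after an affine change of variable) the ``free energy'' whose derivative along a perturbation of the weight produces the Bergman measure. Concretely, fix an arbitrary continuous function $v\in C^0(K)$ and consider the perturbed weight $\phi+v$. One computes that $t\mapsto\cL_k(\mu_k,\phi+tv)$ is convex in $t$ (it is $\frac{1}{2kN_k}\log\vol$ of a ball whose logarithm of volume is concave along such one-parameter families, since $\|s\|^2_{L^2(\mu_k,k(\phi+tv))}=\int|s|^2e^{-2kt v}e^{-2k\phi}d\mu_k$ and $t\mapsto\log\int|s|^2e^{-2ktv}d\mu$ is convex by Hölder), with derivative at $t=0$ equal to $-\int_X v\,\beta(\mu_k,k\phi)$. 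Thus $\cL_k(\mu_k,\phi+tv)-\cL_k(\mu_k,\phi)\ge -t\int_X v\,\beta(\mu_k,k\phi)$ for $t\ge 0$, and the reverse inequality for $t\le 0$ after dividing by $t$.

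Next I would invoke \eqref{equ:BB} at the two weighted compact subsets $(K,\phi)$ and $(K,\phi+tv)$: since $\cL_k(\mu_k,\psi)\ge\cL_k(K,\psi)$ for every probability measure on $K$ and every weight $\psi$, one gets, for $t>0$,
\[
-\frac1t\int_X v\,\beta(\mu_k,k\phi)\ \le\ \frac{\cL_k(\mu_k,\phi+tv)-\cL_k(\mu_k,\phi)}{t}\ \le\ \frac{\cL_k(\mu_k,\phi+tv)-\cL_k(K,\phi)}{t}.
\]
Using the hypothesis $\cL_k(\mu_k,\phi)\to\eneq(K,\phi)$ together with $\cL_k(\mu_k,\phi+tv)\ge\cL_k(K,\phi+tv)\to\eneq(K,\phi+tv)$, passing to the limsup in $k$ yields, for every $t>0$,
\[
\limsup_{k\to\infty}\left(-\int_X v\,\beta(\mu_k,k\phi)\right)\ \le\ \frac{\eneq(K,\phi+tv)-\eneq(K,\phi)}{t}
\]
after also using the upper bound on $\cL_k(\mu_k,\phi+tv)$ coming from combining the hypothesis at $\phi$ with the convexity bound in the $t<0$ direction — so that $\cL_k(\mu_k,\phi+tv)$ is squeezed between the two equilibrium energies up to $o(1)$. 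Letting $t\to0_+$ and recalling the differentiability of the energy at equilibrium from \cite{BB08a}, namely $\frac{d}{dt}\big|_{0_+}\eneq(K,\phi+tv)=\int_X v\,\eq(K,\phi)$, gives $\limsup_k\big(-\int v\,\beta(\mu_k,k\phi)\big)\le -\int v\,\eq(K,\phi)$. Running the same argument with $-v$ in place of $v$ produces the matching liminf, hence $\int_X v\,\beta(\mu_k,k\phi)\to\int_X v\,\eq(K,\phi)$ for every $v\in C^0(K)$; since the $\beta(\mu_k,k\phi)$ are probability measures supported in $K$, this is exactly weak convergence to $\eq(K,\phi)$.

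The main obstacle is the last step: one needs that the limsup and liminf of $\cL_k(\mu_k,\phi\pm tv)$ coincide with the equilibrium energy $\eneq(K,\phi\pm tv)$, i.e.\ that the minimizing hypothesis at $\phi$ propagates to the perturbed weights. This is where the convexity in $t$ is essential — it controls $\cL_k(\mu_k,\phi+tv)$ from \emph{above} in terms of $\cL_k(\mu_k,\phi)$ and the Bergman measure, and from \emph{below} trivially by $\cL_k(K,\phi+tv)$, and one must check these two bounds pinch down to $\eneq(K,\phi+tv)$ as $k\to\infty$ and $t\to 0$ in the right order. A secondary technical point is that $\eneq$ is only \emph{one-sided} differentiable a priori in the big (non-ample) case, so the argument must be set up with one-sided derivatives throughout and the symmetrization $v\leftrightarrow -v$ done carefully; this is precisely the differentiability of $t\mapsto\cE(\phi_{K,t}^*)$ established in \cite{BB08a}, which is the key input I would cite rather than reprove. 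The remaining verifications — convexity of $t\mapsto\log\vol\cB^2$, the formula for its derivative in terms of $\beta$, and weak-compactness of probability measures on $K$ — are routine.
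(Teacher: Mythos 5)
Your overall strategy is exactly the one the paper uses: perturb the weight to $\phi+tv$, use the convexity properties of $\log\vol\cB^{2}$ in the weight variable together with the derivative formula producing the Bergman measure, the asymptotic lower bound $\cL_k(\mu_k,\phi+tv)\ge\cL_k(K,\phi+tv)\to\eneq(K,\phi+tv)$ from \eqref{equ:BB}, the minimizing hypothesis at $t=0$, and the differentiability of $\eneq$ (Theorem \ref{thm:diff}), finally symmetrizing $v\leftrightarrow -v$. However, as written your argument has a sign confusion that is not cosmetic. With the paper's definition of $\cL_k$, the function $f_k(t):=\cL_k(\mu_k,\phi+tv)$ is \emph{concave} in $t$ (Proposition \ref{prop:concave}; your own parenthetical derivation — log-convexity of $t\mapsto\Vert\det S\Vert^2_{L^2(\mu_k,k(\phi+tv))}$ — shows precisely this), and its derivative at $t=0$ is $+\langle\beta(\mu_k,k\phi),v\rangle$, not $-\int v\,\beta(\mu_k,k\phi)$ (Proposition \ref{prop:derivatives}). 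Your final step also silently replaces $\frac{d}{dt}\big|_{0_+}\eneq(K,\phi+tv)=\langle v,\eq(K,\phi)\rangle$ by its negative, so the two sign slips do not cancel consistently.

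This matters because the logical structure of your chain of inequalities depends on the convexity direction. With your stated convexity, bounding $\limsup_k f_k'(0)$ requires an asymptotic \emph{upper} bound on $\cL_k(\mu_k,\phi+tv)$ for $t\neq 0$ — the ``main obstacle'' you identify — and the squeeze you propose (``combining the hypothesis at $\phi$ with the convexity bound in the $t<0$ direction'') cannot deliver it: a convex function with prescribed value at $0$ and only \emph{lower} bounds elsewhere (which is all that $\cL_k(\mu_k,\cdot)\ge\cL_k(K,\cdot)$ gives) admits no upper control away from $0$. Once the sign is corrected, this obstacle disappears entirely: concavity puts the tangent line \emph{above} the graph, so
\begin{equation*}
t\,\langle\beta(\mu_k,k\phi),v\rangle \;=\; t\,f_k'(0)\;\ge\; f_k(t)-f_k(0)\;\ge\;\cL_k(K,\phi+tv)-\cL_k(\mu_k,\phi),
\end{equation*}
and letting $k\to\infty$ (using \eqref{equ:BB} and the hypothesis), then $t\to 0_{+}$ and $t\to 0_{-}$, gives $\langle\beta(\mu_k,k\phi),v\rangle\to\langle\eq(K,\phi),v\rangle$ with no need to propagate the minimizing hypothesis to perturbed weights; this is precisely the elementary concavity argument of Lemma \ref{lem:elementary}. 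So the idea is right and coincides with the paper's proof, but your write-up needs the concavity/derivative signs fixed, after which the problematic squeeze step should simply be deleted.
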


The condition bearing on the sequence $(\mu_{k})$ in Theorem C is
independent of the choice of the reference weighted compact subset
$(K_{0},\phi_{0})$. In fact (\ref{equ:BB}) shows that it can equivalently
be written as the condition \[
\log\frac{\vol\cB^{2}(\mu_{k},k\phi)}{\vol\cB^{\infty}(K,k\phi)}=o(kN_{k}),\]
 which can be understood as a \emph{weak Bernstein-Markov condition}
on the sequence $(\mu_{k})$, relative to $(K,\phi)$, cf.~Lemma \ref{lem:vol}
below.

As will be clear from the proof of Theorem A the case when $\mu_{k}=\delta_{P_{k}}$
is equivalent to equidistribution of sequences $P_{k}$ of configurations
that are \emph{asymptotically Fekete} for $(K,\phi)$ in the sense
that \begin{equation}
\liminf_{k\rightarrow\infty}\frac{1}{kN_{k}}\log\left|(\det S_{k})(P_{k})\right|_{k\phi}\geq0\label{eq:asymptotically fekete}\end{equation}
 where $S_{k}$ is an orthonormal basis for $H^{0}(kL)$ wrt $(\mu_{0},k\phi_{0}).$
In order to get Theorem A we then use the simple fact that \begin{equation}
\beta(\mu,\phi)=\mu\label{eq:balanced in intro}\end{equation}
 for measures $\mu$ of the form $\delta_{P}$.

The proof of Theorem C is closely related to the generalization of
Yuan's equidistribution theorem for generic sequences of $\overline{Q}$-points \cite{Yua06}
obtained in \cite{BB08a}.

\subsection{Applications to interpolation}

Next, we will consider an application of Theorem C to a general \emph{interpolation
problem} for sections of $kL.$ The problem may be formulated as follows:
given a weigted set $(K,\phi)$ what is the distribution of $N_{k}$
(nearly) optimal \emph{interpolation nodes} on $K$ for elements in
$H^{0}(X,kL)?$ Of course, for any generic configuration $P_{k}$
the evaluation operator $\ev_{P_{k}}$ in (\ref{equ:ev}) is invertible
and interpolation is thus possible. But the problem is to find the
distribution of \emph{optimal} interpolation nodes, in the sense that
$P_{k}$ minimizes a suitable operator norm of the interpolation operator
$(\ev_{P_{k}})^{-1}$ over all configurations on $K$. More precisely,
given a measure $\mu$ supported on $K$ and numbers
$p,q$ such that $1\leq p,q\leq\infty,$ the \emph{$L^{p}(\mu,k\phi)$-$L^{q}(\de_{P_k},k\phi)$
distortion}: \begin{equation}
\sup_{s\in H^{0}(kL)}\frac{\left\Vert s\right\Vert _{L^{p}(\mu,k\phi)}}{\left\Vert s\right\Vert _{L^{q}(\delta_{P_{k}},k\phi)}},\label{eq:dist}\end{equation}
 defines a function on the space $K^{N_{k}}$ of all configurations
$P_{k}$ on $K.$ A configuration $P_{k}$ in $K^{N_{k}}$ will be
said to be \emph{optimal} wrt $(\mu;p,q)$ if it minimizes the corresponding
distortion over all configurations on $K.$ It should be pointed out
that it is in practice virtually impossible to find such optimal configurations
numerically. But the next corollary give necessary conditions for any
sequence of configurations to have \emph{sub-exponential} distortion
and in particular to be optimal. 

\begin{corC}
Let $\mu$ be a Bernstein-Markov measure for the weighted regular
set $(K,\phi)$ and let $(P_{k})$ be a sequence of configurations
in $K^{N_{k}}$ such that the distortion $(\ref{eq:dist})$ has subexponential
growth in $k$ for given numbers $p,q\in[1,\infty].$ Then $(P_{k})$
is asymptotically equilibrium distributed i.e. \[
\lim_{k\rightarrow\infty}\delta_{P_{k}}=\eq(K,\phi)\]
in the weak topology of measures. Moreover, any sequence of optimal
configurations wrt $(\mu;p,q)$\emph{ }has subexponential distortion
and is hence\emph{ }asymptotically equilibrium distributed.
\end{corC}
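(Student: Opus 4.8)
The plan is to deduce Corollary C directly from Theorem C, via the equivalence between subexponential distortion and the weak Bernstein--Markov condition. First I would reduce to the case $p=q=\infty$: since $K$ is compact and $\mu$ is a probability measure, the $L^p(\mu,k\phi)$ norm is dominated by the $L^\infty(K,k\phi)$ norm, while the hypothesis that $\mu$ is Bernstein--Markov for $(K,\phi)$ gives the reverse domination up to a subexponential factor; similarly $\|s\|_{L^q(\delta_{P_k},k\phi)}\le\|s\|_{L^\infty(\delta_{P_k},k\phi)}$ always, and one checks that $\|s\|_{L^\infty(\delta_{P_k},k\phi)}\le N_k^{1/q}\|s\|_{L^q(\delta_{P_k},k\phi)}$, a polynomial (hence subexponential) factor. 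Consequently, for any $p,q\in[1,\infty]$, subexponential growth of the distortion $(\ref{eq:dist})$ is equivalent to subexponential growth of the $L^\infty(K,k\phi)$--$L^\infty(\delta_{P_k},k\phi)$ distortion, i.e. of
\[
D_k:=\sup_{s\in H^0(kL)}\frac{\sup_K|s|e^{-k\phi}}{\sup_j|s(x_j)|e^{-k\phi(x_j)}}.
\]

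Next I would translate $D_k$ into a statement about the unit balls, hence into $\cL$-functionals. The ball $\cB^\infty(K,k\phi)$ is contained in $D_k\cdot\cB^\infty(\delta_{P_k},k\phi)$ by definition of $D_k$; the reverse inclusion $\cB^\infty(\delta_{P_k},k\phi)\subset\cB^\infty(K,k\phi)$ holds trivially since $\delta_{P_k}$ is supported on $K$, but what we actually need is a volume comparison, so I would use the Fekete/Vandermonde point of view: applying the evaluation operator $\ev_{P_k}$ and computing $\vol$ via $|\det\ev_{P_k}|$ relative to the orthonormal basis $S_k$ for $(\mu_0,k\phi_0)$, one obtains
\[
\cL_k(\delta_{P_k},\phi)=\cL_k(K,\phi)+\frac{1}{kN_k}\log\bigl(\text{something controlled by }D_k\text{ and by }|(\det S_k)(P_k)|_{k\phi}\bigr)+o(1),
\]
and more to the point $D_k=e^{o(k)}$ forces $\log\frac{\vol\cB^\infty(\delta_{P_k},k\phi)}{\vol\cB^\infty(K,k\phi)}=o(kN_k)$. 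Combining this with the Bernstein--Markov bound on $\mu$ (which gives $\vol\cB^2(\mu,k\phi)\le e^{o(kN_k)}\vol\cB^\infty(K,k\phi)$, cf.~the weak Bernstein--Markov reformulation after Theorem C) yields $\cL_k(\delta_{P_k},\phi)\to\eneq(K,\phi)$, whence Theorem C applies with $\mu_k=\delta_{P_k}$. Finally, using $(\ref{eq:balanced in intro})$, namely $\beta(\delta_{P_k},k\phi)=\delta_{P_k}$, Theorem C gives exactly $\lim_k\delta_{P_k}=\eq(K,\phi)$.

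For the last assertion I would exhibit a single sequence of configurations with subexponential distortion and invoke minimality: a sequence $(Q_k)$ of genuine Fekete configurations for $(K,k\phi)$ has $D_k$ bounded — indeed, for a Fekete configuration the inequality $\sup_K|s|e^{-k\phi}\le \sum_j|s(x_j)|e^{-k\phi(x_j)}\le N_k\sup_j|s(x_j)|e^{-k\phi(x_j)}$ holds by Lagrange-type interpolation against the maximizing configuration, so $D_k\le N_k$, which is polynomial; passing back through the norm comparisons of the first paragraph, the $L^p(\mu,k\phi)$--$L^q(\delta_{Q_k},k\phi)$ distortion of $(Q_k)$ is then subexponential, using again that $\mu$ is Bernstein--Markov. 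Since an optimal sequence minimizes the distortion over all configurations, its distortion is at most that of $(Q_k)$, hence also subexponential, and the first part of the corollary applies. The main obstacle I anticipate is the bookkeeping in the second paragraph: carefully relating $\vol\cB^\infty(\delta_{P_k},k\phi)$ to $D_k$ and to $|(\det S_k)(P_k)|_{k\phi}$ so that the $o(kN_k)$ error is genuinely uniform, and making sure the polynomial factors $N_k^{1/q}$, $N_k$ introduced at several places all remain $e^{o(k)}$ after division by $kN_k$ — which they do, since $N_k=O(k^n)$, so $\log N_k=O(\log k)=o(k)$.
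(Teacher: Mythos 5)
Your proposal is correct in substance and follows essentially the same route as the paper: reduce the general $(p,q)$ case to the $L^{\infty}(K)$--$L^{\infty}(\delta_{P_k})$ case using the Bernstein--Markov property of $\mu$ together with the comparisons $\Vert s\Vert_{L^q(\delta_{P_k},k\phi)}\leq\Vert s\Vert_{L^{\infty}(\delta_{P_k},k\phi)}\leq N_k^{1/q}\Vert s\Vert_{L^q(\delta_{P_k},k\phi)}$; convert the resulting subexponential distortion $D_k$ into the hypothesis of Theorem C; conclude with $\beta(\delta_{P_k},k\phi)=\delta_{P_k}$ (Proposition \ref{prop:balanced}); and, for the optimal case, bound the distortion of genuine Fekete configurations polynomially via Proposition \ref{pro:dist for fekete} and invoke minimality. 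Two remarks on your middle step. First, the ball inclusions are stated backwards: the correct chain is $\cB^{\infty}(K,k\phi)\subset\cB^{\infty}(\delta_{P_k},k\phi)\subset D_k\cdot\cB^{\infty}(K,k\phi)$, the first being the trivial one and the second being the definition of $D_k$ (a section small at the points $x_j$ need not be small on $K$, so the discrete ball is the larger one). With the inclusions corrected, your argument closes cleanly and is in fact more direct than the Vandermonde detour you then sketch: they give $\log\bigl(\vol\cB^{\infty}(\delta_{P_k},k\phi)/\vol\cB^{\infty}(K,k\phi)\bigr)\leq 2N_k\log D_k=o(kN_k)$, and combined with $\cB^{2}(\delta_{P_k},k\phi)\subset N_k^{1/2}\,\cB^{\infty}(\delta_{P_k},k\phi)$ this is exactly the weak Bernstein--Markov condition (\ref{equ:weakBM}), i.e.\ the hypothesis of Theorem C; at this point no further use of the Bernstein--Markov property of $\mu$ is needed, so your sentence invoking $\vol\cB^{2}(\mu,k\phi)\leq e^{o(kN_k)}\vol\cB^{\infty}(K,k\phi)$ is extraneous. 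The paper instead applies the distortion bound variable by variable to $\det S_k$, using antisymmetry, to conclude that $(P_k)$ is asymptotically Fekete and then reuses the computation from the proof of Theorem A (where a Bernstein--Markov measure enters, but for the reference pair $(K_0,\phi_0)$, not for $(K,\phi)$); both mechanisms work, and the polynomial factors $N_k^{1/q}$, $N_k^{1/2}$, $N_k$ are indeed harmless since $\log N_k=O(\log k)$.
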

Note that we have assumed that $(K,\phi)$ is regular to make sure
that when $p=\infty$ the $L^{\infty}$ norm wrt $\mu$ coincides
with the sup-norm on $K$ (cf.~Proposition~\ref{prop:deter} below).  

It should be pointed out that the content of the corollary above is
well-known in the classical one-variable setting in $\C$ corresponding
to the case $(X,L)=(\PP^{1},\mathcal{O}(1)).$ Indeed in the latter
setting the case where $K$ is a compact subset of the real line $\R\subset\C$
was treated in \cite{GMS02}. The general one-dimensional case was
then obtained in \cite{BL03a}.

When $(p,q)=(\infty,2)$ the distortion is given by 
$$\sup_K\rho(\mu_{0},k\phi)^{1/2}.$$
Minimizers of the distortion among \emph{all}
probability measures were called \emph{optimal measures} (for $(K,k\phi)$)
in \cite{BBLW08}, where the convergence result for optimal measures
was obtained (by reducing the problem to the convergence of
Fekete points \cite{BB08b}). It turns out that optimal measures satisfy
(\ref{eq:balanced in intro}) and yield probability measures on $K$ that minimize the functional \emph{$\mathcal{L}(\cdot,\phi)$}
- see \cite{KW60} and Proposition \ref{prop:optimal} in our setting.
Such measures appear naturally in the context of \emph{optimal experimental
designs} (see \cite{BBLW08} and references therein).
\begin{rem}
For a numerical study in the setting of Corollary A and with $\mu_{0}$
the invariant measure on $S^{2}$ see \cite{SW04}, where the cases
$(p,q)=(\infty,\infty)$ and $(p,q)=(2,2)$ are considered. It should
also be pointed out that in the classical litterature on orthogonal
polynomials optimal configurations are usually called \emph{Lesbegue points} in the case $(p,q)=(\infty,\infty)$and \emph{Fejer points}
in the case $(p,q)=(\infty,2)$. 
\end{rem}

\subsubsection*{Recursively extremal configurations}

Finally, we will consider a recursive way of constructing configurations
with certain extremal properties. Even if the precise construction
seems to be new, it should be emphasized that it is inspired by the
elegant algorithmic construction of determinantal random point processes
in \cite{hkpv}.

Fix a weighted measure $(\mu,\phi)$ where $\mu$ is as before a probability
measure on $K$. A configuration $P=(x_{1},...,x_{N})$ will be said
to be \emph{recursively extremal} for $(\mu,\phi)$ if it arises in
the following way. Denote by $\mathcal{H}_{N}$ be the corresponding
Hilbert space $H^{0}(X,L)$ of dimension $N$. Take a pair $(x_{N},s_{N})$
maximizing the point-wise norm $\left|s(x)\right|_{\phi}^{2}$ over
all points $x$ in the set $K$ and sections $s$ in the unit-sphere
in $\mathcal{H}_{N}.$ Next, replace $\mathcal{H}_{N}$ by the Hilbert
space $\mathcal{H}_{N-1}$ of dimension $N-1$ obtained as the orthogonal complement
of $s_{N}$ in $\mathcal{H}_{N}$ and repeat the procedure to get
a new pair $(x_{N-1},s_{N-1})$ where now $s_{N-1}\in\mathcal{H}_{N-1}.$
Continuing in this way gives a configuration $P:=(x_{1},...,x_{N})$
after $N$ steps.

Note that $x_{N}$ may be equivalently obtained as a point maximizing
the Bergman distortion function $\rho(x)$ of $\mathcal{H}_{N}$ and
so on. Hence, the main advantage of recursively extremal configurations
over Fekete configurations, is that they are obtained by maximizing
functions defined on $X$ and not on the space $X^{N}$ of increasing
dimension. This advantage should make them useful in numerical interpolation
problems. We show that a sequence of recursively extremal configurations
$P_{k}$ is, in fact, \emph{asymptotically Fekete} (i.e. (\ref{eq:asymptotically fekete})
holds). As a direct consequence $P_{k}$ is equilibrium distributed:
\begin{corD}
Let $\mu$ be a Bernstein-Markov measure for the weighted set $(K,\phi)$
and $P_{k}$ a sequence of configurations which are \emph{recursively
extremal} for $(\mu,k\phi).$ Then \[
\lim_{k\rightarrow\infty}\delta_{P_{k}}=\eq(K,\phi)\]
 in the weak topology of measures. 
\end{corD}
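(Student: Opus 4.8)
The plan is to deduce Corollary D from Theorem C. Since the recursively extremal construction applied to $(\mu,k\phi)$ produces an invertible evaluation operator $\ev_{P_k}$ (see below), the measure $\delta_{P_k}$ induces a genuine norm on $H^0(kL)$ and one has $\beta(\delta_{P_k},k\phi)=\delta_{P_k}$ by (\ref{eq:balanced in intro}); hence it suffices to prove that
\[
\lim_{k\to\infty}\cL_k(\delta_{P_k},\phi)=\eneq(K,\phi),
\]
equivalently — in view of (\ref{eq:asymptotically fekete}) — that a recursively extremal sequence is asymptotically Fekete. The a priori inequality $\cL_k(\delta_{P_k},\phi)\ge\cL_k(K,\phi)$ together with (\ref{equ:BB}) already gives $\liminf_k\cL_k(\delta_{P_k},\phi)\ge\eneq(K,\phi)$, so only the matching upper bound is at stake.

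The heart of the matter is an exact evaluation of the Vandermonde-type determinant of $\ev_{P_k}$ made possible by the recursive structure. Fix $k$, put $N=N_k$, and let $H^0(kL)=\cH_N\supset\cH_{N-1}\supset\dots\supset\cH_1$, together with orthonormal sections $s_1,\dots,s_N$ and points $x_1,\dots,x_N$ in $K$, be the data produced by the construction, so that $s_j\in\cH_j$, $\cH_{j-1}=s_j^\perp\cap\cH_j$, and $(x_j,s_j)$ maximizes $|s(x)|^2_{k\phi}$ over all $x\in K$ and all $s\in\cH_j$ with $\|s\|_{L^2(\mu,k\phi)}=1$. The key observation is that for each $j$ the maximizing section $s_j$ is, up to a unimodular factor, the normalized Riesz representative of evaluation at $x_j$ on $\cH_j$, hence spans the orthogonal complement in $\cH_j$ of the hyperplane $\ker(\ev_{x_j}|_{\cH_j})$; therefore $\cH_{j-1}=\ker(\ev_{x_j}|_{\cH_j})$, so every section in $\cH_{j-1}$ — in particular $s_i$ for every $i<j$ — vanishes at $x_j$. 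Consequently the matrix $(s_i(x_j))_{i,j}$ is triangular with non-vanishing diagonal, and
\[
\bigl|\det(s_i(x_j))\bigr|_{k\phi}=\prod_{j=1}^{N}|s_j(x_j)|_{k\phi}=\prod_{j=1}^{N}\Bigl(\sup_K\rho_{\cH_j}\Bigr)^{1/2},
\]
where $\rho_{\cH_j}$ denotes the distortion (Bergman) function of $\cH_j$ relative to $k\phi$, the last equality using that $(x_j,s_j)$ maximizes $|s(x)|^2_{k\phi}=\rho_{\cH_j}(x)$ over $K$. In particular $\ev_{P_k}$ is invertible, as claimed above.

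It remains to feed this identity into the $\cL$-functional. Working in the coordinates on $H^0(kL)$ provided by the orthonormal basis $(s_j)$ — in which $\cB^2(\mu,k\phi)$ is the Euclidean unit ball while $\cB^2(\delta_{P_k},k\phi)$ is the preimage under $\ev_{P_k}$ of the ball of radius $\sqrt N$ — a direct volume computation gives
\[
\cL_k(\delta_{P_k},\phi)-\cL_k(\mu,\phi)=\frac{\log N_k}{2k}-\frac{1}{2kN_k}\sum_{j=1}^{N_k}\log\sup_K\rho_{\cH_j}.
\]
Now $1\le\sup_K\rho_{\cH_j}\le\sup_K\rho(\mu,k\phi)$: the lower bound because $\int_K\rho_{\cH_j}\,d\mu=\dim\cH_j\ge1$, the upper bound by monotonicity of the distortion function under inclusion of subspaces of $H^0(kL)$. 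Since $\mu$ is Bernstein-Markov for $(K,\phi)$, (\ref{equ:berg3}) gives $\log\sup_K\rho(\mu,k\phi)=o(k)$; as also $\log N_k=O(\log k)$, both terms on the right tend to $0$, so $\cL_k(\delta_{P_k},\phi)=\cL_k(\mu,\phi)+o(1)$. Finally, the Bernstein-Markov property of $\mu$, combined with Lemma \ref{lem:vol} and (\ref{equ:BB}), yields $\cL_k(\mu,\phi)\to\eneq(K,\phi)$ — this is exactly the input used to deduce Theorem B from Theorem C. Hence $\cL_k(\delta_{P_k},\phi)\to\eneq(K,\phi)$, Theorem C applies, and $\delta_{P_k}=\beta(\delta_{P_k},k\phi)\to\eq(K,\phi)$.

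The only genuinely delicate step is the determinant identity of the second paragraph: one must recognize that replacing $\cH_j$ by the orthogonal complement of the extremal section $s_j$ amounts to killing evaluation at $x_j$, so that the interpolation nodes are consumed one at a time and the otherwise intractable Vandermonde determinant collapses to a product of suprema of Bergman functions of the intermediate spaces, each of which is sub-exponential precisely because $\mu$ is Bernstein-Markov. Once this is established, the comparison with $\cL_k(\mu,\phi)$ and the appeal to Theorem C are routine; one should however note that nothing in the argument depends on the (possibly non-unique) choice of maximizer at each step, nor on the choice of the reference weighted set.
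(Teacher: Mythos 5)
Your proposal is correct and follows essentially the same route as the paper: you establish that the evaluation matrix $(s_i(x_j))$ is triangular, identify $|\det S_k(P_k)|_{k\phi}^2$ with $\prod_j\sup_K\rho_{\cH_j}$, bound each factor below using $\int\rho_{\cH_j}\,d\mu=\dim\cH_j$, and then invoke Theorem C together with $\beta(\delta_{P_k},k\phi)=\delta_{P_k}$. The only differences are cosmetic: the paper deduces triangularity by expanding $\rho^{\cH_j}$ in the orthonormal basis and evaluating at $x_j$ rather than via the Riesz representative, and it phrases the final step as ``asymptotically Fekete'' where you carry out the volume bookkeeping against $\cL_k(\mu,\phi)$ explicitly (a step the paper leaves terse, so your version is if anything more complete).
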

\begin{ackn}
It is a pleasure to thank Bo Berndtsson, Jean-Pierre Demailly and Norm Levenberg for stimulating discussions related to the topic of this article. 
\end{ackn}

\section{Regular sets and Bernstein-Markov measures}

Recall that $L$ denotes a given big line bundle over a complex compact
manifold $X$. The existence of a such a big line bundle on $X$ is
equivalent to $X$ being \emph{Moishezon}, i.e.~bimeromorphic to
a projective manifold, and $X$ is then projective iff it is K{\"a}hler.

\subsection{Pluripolar subsets and regularity}

\label{sec:polar} The goal of this section is to recall some preliminary
results from \cite{BB08a} and to quickly explain how to adapt to our
big line bundle setting further results on equilibrium weights that
are standard in the classical situation. We refer to Klimek's book \cite{Kli91}
and Demailly's survey \cite{Dem} for details.

First recall that a set $A$ in $X$ said to be \emph{(locally) pluripolar}
if it is locally contained in the polar set of a local psh function.
For a big line bundle $L$ this is equivalent to the following global
notion of pluripolarity (as shown by Josefson in the classical setting):
\begin{prop}
\label{prop:josef} If $A\subset X$ is (locally) pluripolar, then
there exists a psh weight $\phi$ on $L$ such that $A\subset\{\phi=-\infty\}$. \end{prop}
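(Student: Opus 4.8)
The plan is to reduce the global statement to the classical Josefson theorem on $\C^n$ (or more precisely on polydiscs in $\C^n$), which is already available in the literature, e.g.\ in Klimek's book \cite{Kli91}. The point is that pluripolarity, being a local notion, is detected on a finite cover of $X$ by coordinate balls, and on each such ball the classical Josefson theorem produces a global psh function on $\C^n$ whose $-\infty$ locus contains the given set; the remaining work is to patch these local potentials into the weight of a single singular metric on the big line bundle $L$.

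\textbf{Step 1: local data.} First I would cover the compact manifold $X$ by finitely many open sets $U_1,\dots,U_m$, each biholomorphic to a bounded domain (say a polydisc) in $\C^n$, over which $L$ is trivialized by a nowhere-vanishing holomorphic section $e_i$, with transition functions $g_{ij}\in\cO^*(U_i\cap U_j)$. Given the locally pluripolar set $A$, on each $U_i$ we have by the classical Josefson theorem a psh function $v_i$ on (a neighborhood of the closure of) $U_i$, not identically $-\infty$, with $A\cap U_i\subset\{v_i=-\infty\}$; after subtracting a constant we may assume $v_i\le 0$ on $U_i$.

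\textbf{Step 2: fix a reference psh weight and a cutoff partition.} Since $L$ is big, by \cite{BB08a} there exists a psh weight $\phi_0$ on $L$ with minimal singularities; in the local trivialization $e_i$ it is represented by a psh function $\phi_{0,i}$, and these satisfy the compatibility $\phi_{0,i}-\phi_{0,j}=\log|g_{ij}|$ on overlaps. The idea is to write the desired weight as $\phi=\phi_0+u$ where $u$ is a global (quasi-psh) function that is $-\infty$ on $A$; equivalently, in each chart $\phi$ is represented by $\phi_{0,i}+u$, and since $\phi_{0,i}-\phi_{0,j}$ has the correct transition behaviour it suffices that $u$ be a genuine function on $X$, psh modulo the bounded curvature contribution of $dd^c\phi_0$. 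To build such a $u$ I would glue the $v_i$ using a construction of Richberg/regularized-maximum type: choose a slightly shrunken cover $V_i\Subset U_i$ still covering $X$, pick large constants $C_i>0$, and set, on a neighborhood of each point $x$,
\[
u(x):=\text{(regularized) }\max_{\,i\,:\,x\in U_i}\bigl(\varepsilon_i v_i(x)-C_i\bigr),
\]
where the weights $\varepsilon_i>0$ are small and the constants $C_i$ are chosen so that near $\partial V_i$ (i.e.\ where $V_i$ stops contributing) the term $\varepsilon_i v_i-C_i$ is dominated by some other term $\varepsilon_j v_j-C_j$ with $x\in V_j$; a standard Richberg-style bookkeeping argument with the cover's Lebesgue number makes this possible. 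The regularized maximum is then locally a finite max of functions each of which is $dd^c\phi_0$-psh after multiplying curvature by a uniform constant, hence $u$ is quasi-psh, and $u=-\infty$ exactly where all the locally-relevant $v_i$ are $-\infty$, which in particular holds on $A$. Finally, absorbing the bounded curvature defect of $u$ into a large multiple of $\phi_0$ (using that $dd^c\phi_0\ge 0$ and $L$ big, so $dd^c(m\phi_0)$ dominates any fixed smooth form for $m\gg0$), the function $\psi:=(1-t)\phi_0+t(\phi_0+u)$ for small $t>0$, suitably rescaled, is a genuine psh weight on $L$ with $A\subset\{\psi=-\infty\}$.

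\textbf{Main obstacle.} The analytically delicate point is not the invocation of the classical theorem but the gluing: ensuring that the local patching with cutoffs produces a function that is (quasi-)psh \emph{globally} and simultaneously still takes the value $-\infty$ on all of $A$. The competing requirements are that the constants $C_i$ be large enough for the max to ``switch off'' $V_i$ before its boundary (so no spurious non-psh behaviour is introduced at chart boundaries), yet that this switching never forces $u$ to be finite at a point of $A$ — which is guaranteed because at a point $x\in A$ \emph{every} $v_i$ with $x\in U_i$ already equals $-\infty$, so every competitor in the max is $-\infty$ there. I would be careful to phrase the curvature absorption precisely in the big (not merely ample) setting, using the minimal-singularity weight $\phi_0$ from \cite{BB08a} rather than a smooth metric. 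Once the gluing is set up correctly, the rest is routine.
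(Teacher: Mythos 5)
There is a genuine gap, and it sits exactly where you locate the ``main obstacle'': the Richberg-type gluing of the local potentials does not work for functions with $-\infty$ poles. To make the regularized maximum $u=\max_i(\e_i v_i-C_i)$ plurisubharmonic across $\partial U_i$ you must arrange that the $i$-th term is strictly dominated, near $\partial U_i$, by some term $\e_j v_j-C_j$ with $U_j$ covering that region. This requires a \emph{lower} bound on $v_j$ there, and no such bound exists: $v_j$ is unbounded below (indeed equal to $-\infty$ on $A\cap U_j$) on every neighbourhood of any point of $A$, and the dangerous points are those \emph{near} $A$ but not in $A$, close to $\partial U_i$, where $v_j$ is hugely negative while $v_i$ may be $0$. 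There the $i$-th term is the active one, it gets cut off discontinuously at $\partial U_i$, and $u$ fails to be upper semicontinuous (hence psh) no matter how the constants $\e_i, C_i$ are chosen. Your observation that all competitors are $-\infty$ \emph{at} points of $A$ does not address this. This local-to-global passage is precisely the non-trivial content of Josefson's theorem and of its extension to compact K\"ahler manifolds, so the proposal in effect tries to reprove that theorem by a patching argument that is known to fail; the paper instead avoids the issue entirely by writing $L=(L-E)+E$ with $E$ an effective $\Q$-divisor and $L-E$ ample (Kodaira's lemma, using bigness), invoking Guedj--Zeriahi's global Josefson theorem \cite{GZ05} to produce a positive current $T$ in the ample class $c_1(L-E)$ whose polar set contains $A$, and taking the psh weight on $L$ with curvature $T+[E]$.

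A secondary, fixable error: in the absorption step you use the minimal-singularity weight $\phi_0$ from \cite{BB08a} and claim $dd^c(m\phi_0)$ dominates a fixed smooth form for $m\gg0$. It does not: $m\phi_0$ is a weight on $mL$, not $L$, and scaling a merely positive current never produces strict positivity (for big non-ample $L$ the minimal-singularity current is in general not a K\"ahler current). What you need is a \emph{strictly} psh weight $\psi_+$ on $L$, which exists precisely because $L$ is big; then $(1-t)\psi_+ + t(\psi_+ +u)$ with $t$ small would absorb a bounded negative part of $dd^cu$ --- but this only helps once a genuine quasi-psh $u$ with poles along $A$ has been produced, which is the step that is missing.
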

\begin{proof}
Since $L$ is big, we can find an effective divisor $E$ with $\Q$-coefficients
such that $L-E$ is ample. By Guedj-Zeriahi's extension of Josefson's
result to the K{\"a}hler situation \cite{GZ05}, there exists a closed
positive $(1,1)$-current $T$ cohomologous to $L-E$ whose polar
set contains $A$. We can thus find a psh weight $\phi$ on $L$ such
that $dd^{c}\phi=T+[E]$, and the polar set of $\phi$ contains $A$
as desired. 
\end{proof}
By definition, a \emph{weighted compact subset} $(K,\phi)$ consists
of a non-pluripolar compact set $K\subset X$ together with a continuous
Hermitian metric $e^{-\phi}$ on $L|_{K}$. By the Tietze-Urysohn
extension theorem, $\phi$ extends to a continuous weight on $L$
over all of $X$. Now if $E$ is an \emph{arbitrary} subset of $X$
and $\phi$ is a continuous weight on $L$ (over all of $X$) we define
the associated extremal function by \[
\phi_{E}:=\sup\left\{ \psi\,\text{ psh weight on }L,\,\psi\leq\phi\,\,\text{ on }E\right\} .\]

It is shown (see \cite{GZ05}) that its usc regularization $\phi_{E}^{*}$
is a psh weight if $E$ is non-pluripolar, whereas $\phi_{E}^{*}\equiv+\infty$
if $E$ is pluripolar.

It is enough to consider psh weights induced by sections in the definition
of $\phi_{E}^{*}$: 
\begin{prop}
\label{prop:equisec} Let $E$ be a non-pluripolar subset of $X$
and let $\phi$ be a continuous weight. Then we have \begin{equation}
\phi_{E}^{*}=\mathrm{sup}^{*}\{\frac{1}{k}\log|s|,\, k\geq1,\, s\in H^{0}(kL),\,\sup_{E}|s|_{k\phi}\leq1\}.\label{equ:equisec}\end{equation}

\end{prop}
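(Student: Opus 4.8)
\textbf{Proof plan for Proposition \ref{prop:equisec}.}

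The plan is to prove the two inequalities ``$\geq$'' and ``$\leq$'' between $\phi_E^*$ and the usc regularization of the envelope $\Psi := \sup\{\frac{1}{k}\log|s| : k\geq 1,\ s\in H^0(kL),\ \sup_E|s|_{k\phi}\leq 1\}$. The inequality $\phi_E^* \geq \Psi^*$ is the easy direction: for any $s\in H^0(kL)$ with $\sup_E|s|_{k\phi}\leq 1$, the weight $\frac{1}{k}\log|s|$ is a genuine psh weight on $L$ (its curvature is $\frac{1}{k}[\{s=0\}]\geq 0$) and by construction $\frac{1}{k}\log|s|\leq \phi$ on $E$; hence it is one of the competitors in the definition of $\phi_E$, so $\frac{1}{k}\log|s|\leq\phi_E\leq\phi_E^*$. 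Taking the supremum over all such $s$ and $k$ and then the usc regularization (using that $\phi_E^*$ is already usc) gives $\Psi^*\leq\phi_E^*$.

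The reverse inequality $\phi_E^*\leq\Psi^*$ is the substantive part. First I would reduce to controlling $\phi_E$ itself: since $\Psi^*$ is usc and $\phi_E^*$ is the usc regularization of $\phi_E$, it suffices to show $\phi_E\leq\Psi^*$ pointwise, or even $\phi_E\leq\Psi^* + \varepsilon$ for every $\varepsilon>0$. So fix a psh weight $\psi$ on $L$ with $\psi\leq\phi$ on $E$; I must approximate $\psi$ from below, away from a small error, by normalized logarithms of sections vanishing appropriately. The idea is to perturb $\psi$ to have strictly positive curvature and \emph{analytic singularities}: using that $L$ is big, write $L = A + \{D\}$ with $A$ ample and $D$ an effective $\mathbb{Q}$-divisor, and replace $\psi$ by $(1-\delta)\psi + \delta\psi_D + \delta\,\omega_A$-type weight, where $\psi_D$ is the weight with $dd^c\psi_D = [D]$ and $\omega_A$ is a smooth positively curved weight on $A$; this produces a psh weight $\psi_\delta \leq \psi$ (up to an additive constant $O(\delta)$) whose curvature dominates a fixed Kähler form. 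Then I would apply a Hörmander/Ohsawa-Takegoshi-type $L^2$ estimate (or the global regularization theorem of Demailly, combined with the Bergman-kernel asymptotics / partial Zariski decomposition results recalled from \cite{BB08a}) to produce, for $k$ large, sections $s\in H^0(kL)$ with $\frac{1}{k}\log|s|$ close to $k^{-1}(\text{something})\to\psi_\delta$ on a large set, and in particular with $\sup_E|s|_{k\phi}\leq 1$ after a harmless rescaling using $\psi_\delta\leq\phi$ on $E$. Letting $k\to\infty$ and then $\delta\to 0$ yields $\psi\leq\Psi^* + o(1)$, and taking the sup over $\psi$ finishes the argument.

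The main obstacle is the construction of the approximating sections with the precise sup-norm control on $E$: one needs the error in $\sup_E|s|_{k\phi}$ to be subexponential (so that rescaling by $e^{o(k)}$ fixes it without destroying the limit), and one needs the $L^2$-to-$L^\infty$ comparison on $E$, which is where non-pluripolarity of $E$ and the big (rather than ample) setting both enter. I expect that the cleanest route is to \emph{not} reprove this from scratch but to invoke the corresponding statement already established in \cite{BB08a} (the ``equilibrium weight equals envelope of normalized Fubini–Study-type weights'' result used there in the proof of \eqref{equ:BB}), together with Proposition \ref{prop:josef} to handle the pluripolar error sets; the present proposition is then essentially a restatement adapted to an arbitrary continuous weight $\phi$ rather than one attached to a weighted compact subset. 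The only genuinely new check is that enlarging from a compact $K$ to an arbitrary non-pluripolar $E$ does not change anything, which follows because the defining supremum is the same in both cases and $\phi_E^*$ is insensitive to pluripolar modifications of $E$.
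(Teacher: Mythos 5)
Your easy direction is fine and your overall skeleton for the hard direction (perturb $\tau$ to a strictly psh weight using bigness, then invoke Ohsawa--Takegoshi-type $L^2$ machinery) is the same as the paper's, but the decisive step is missing. The paper does not produce sections whose normalized logarithms are ``close to $\psi_\delta$ on a large set''; what is needed, and what your sketch does not supply, is: for each fixed point $x_0$ off the polar set of the perturbed weight $\tau$, a section $s_k\in H^0(kL)$ which is \emph{extremal at $x_0$}. Ohsawa--Takegoshi extension from the single point $\{x_0\}$ gives $\int_X|s_k|^2e^{-2k\tau}d\lambda\leq C\,|s_k(x_0)|^2e^{-2k\tau(x_0)}$, and the mean-value inequality (Lebesgue measure is Bernstein--Markov for $(X,\phi)$), combined with $\tau\leq\phi+\e$ near $E$, converts this $L^2(k\tau)$ bound into $\sup_E|s_k|^2_{k\phi}\leq\sup_X|s_k|^2_{k\phi}\leq C'e^{3k\e}|s_k(x_0)|^2e^{-2k\tau(x_0)}$; after normalizing $\sup_E|\tilde s_k|_{k\phi}=1$ one gets $\tau(x_0)\leq\frac{1}{k}\log|\tilde s_k(x_0)|+O(\e)+O(1/k)$, i.e.\ a competitor in (\ref{equ:equisec}) that recovers the value at $x_0$. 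Your ``harmless rescaling using $\psi_\delta\leq\phi$ on $E$'' is exactly the point that requires an argument: closeness on a large set gives neither an upper bound for $|s|e^{-k\psi_\delta}$ on the arbitrary (possibly very small) set $E$ nor, after rescaling, a lower bound at the chosen point; the subexponential error is precisely what the extremal-at-a-point construction plus the mean-value step provide. Also, non-pluripolarity of $E$ is not where the $L^2$-to-$L^\infty$ comparison enters (the sup on $E$ is simply dominated by the sup on $X$); it only guarantees that $\phi_E^*$ is a genuine psh weight.

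Your proposed ``cleanest route''---quoting a corresponding statement from \cite{BB08a}---is not available: no Siciak-type identity between the psh envelope and the envelope of normalized sections is established there (the proof of (\ref{equ:BB}) does not go through such an approximation), which is why the present paper proves Proposition \ref{prop:equisec} from scratch; citing it back to \cite{BB08a} would be circular or at best an unverified reference. The compact-versus-arbitrary-$E$ reduction is likewise not the issue, since the argument works directly for any $E$ and Proposition \ref{prop:josef} plays no role in this proof. Two minor points: the analytic-singularities refinement of your perturbation is unnecessary (strict positivity of $(1-\e)\tau+\e\psi_+$ with $\psi_+$ any strictly psh weight suffices), and it is exactly the poles of $\psi_+$ in the big (non-ample) case that force the usc regularization on the right-hand side of (\ref{equ:equisec}).
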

In the classical case, the result is known to hold even without taking
usc regularization on both sides (cf.~for instance Theorem 5.6 in
the survey \cite{Dem}). As we shall see in the proof this is more
generally the case when $L$ is ample, but we do not know whether it
remains true for an arbitrary big line bundle.
\begin{proof}
Let $\psi$ denote the right-hand side of (\ref{equ:equisec}). It
is clear by definition that $\phi_{E}^{*}\geq\psi$. To get the converse
inequality we shall apply the Ohsawa-Takegoshi extension theorem.
Let thus $\tau$ be a psh weight on $L$ such that $\tau\leq\phi$
on $E$. In order to show that $\tau\leq\psi$, we may furthermore
assume that $\tau$ is \emph{strictly psh} in the sense that $dd^{c}\tau$
dominates a smooth strictly positive $(1,1)$-form. Indeed there exists
a strictly psh weight $\psi_{+}$ since $L$ is big, and $(1-\e)\tau+\e\psi_{+}$
is then strictly psh for every $\e>0$. If we know that \[
(1-\e)\tau+\e\psi_{+}\leq\psi\]
 for each $\e>0$ then $\tau\leq\psi$ holds outside the polar set
of $\psi_{+}$, which has Lebesgue measure $0$, and we infer $\tau\leq\psi$
everywhere on $X$ as desired.

Note that it is at this point that we need to take usc regularizations
in (\ref{equ:equisec}) since $\psi_{+}$ will have poles in general
when $L$ is not ample. 

Since $\tau$ is strictly psh, the Ohsawa-Takegoshi-Manivel $L^{2}$-extension
theorem (see for instance \cite{Bern05} for a particularly nice approach) yields a constant
$C>0$ such that for every point $x_{0}\in X$ outside the polar set
of $\tau$ we can find a sequence
$s_{k}\in H^{0}(kL)$ with \begin{equation}
\int_{X}|s_{k}|^{2}e^{-2k\tau}d\lambda\leq C|s_{k}(x_{0})|^{2}e^{-2k\tau(x_{0})}\label{equ:OT}\end{equation}
 for all $k\gg1$ (more precisely for all $k$ such that the strictly
positive current $kdd^{c}\tau$ absorbs the curvature of some given
smooth metric on the canonical bundle). We have denoted by $\lambda$
a Lebesgue measure on $X$. On the other hand given $\e>0$ we have
$\tau\leq\phi+\e$ on a neighbourhood of $E$ by upper semi-continuity
of $\tau-\phi$. Since $\lambda$ is Bernstein-Markov for $(X,\phi)$
(an easy consequence of the mean value inequality, cf. \cite{BB08a})
there exists another constant $C_{1}>0$ such that \[
\sup_{E}|s_{k}|_{k\phi}^{2}\leq\sup_{X}|s_{k}|_{k\phi}^{2}\]
 \[
\leq C_{1}e^{k\e}\int_{X}|s_{k}|^{2}e^{-2k\phi}d\lambda\leq C_{1}e^{3k\e}\int_{X}|s_{k}|^{2}e^{-2k\tau}d\lambda\]
 which is in turn \[
\leq C_{2}e^{3k\e}|s_{k}(x_{0})|^{2}e^{-2k\tau(x_{0})}\]
 for all $k\gg1$ by (\ref{equ:OT}). But this means that \[
\tau(x_{0})\leq\frac{1}{k}\log|\tilde{s}_{k}(x_{0})|+\frac{3}{2}\e+\frac{C_{3}}{k}\]
 where \[
\tilde{s}_{k}:=\frac{s_{k}}{\sup_{E}|s_{k}|_{k\phi}}\]
 has $\sup_{E}|\tilde{s}_{k}|_{k\phi}=1$ hence is a candidate in
the right-hand side of (\ref{equ:equisec}). It therefore follows that
\[
\tau(x_{0})\leq\psi(x_{0})+2\e\]
 for every $\e>0$ and every $x_{0}$ outside the polar
locus of $\tau$, which has measure
$0$, and the result follows. 
\end{proof}
Using Proposition \ref{prop:josef} one proves the following two
useful facts exactly as in the classical setting (cf.~for instance \cite{Kli91},
p.194): 
\begin{prop}
\label{prop:polar} Let $\phi$ be a continuous weight and let $E,A\subset X$
be two subsets with $A$ pluripolar, then we have $\phi_{E\cup A}^{*}=\phi_{E}^{*}$. \end{prop}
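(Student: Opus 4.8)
The goal is to prove Proposition~\ref{prop:polar}: for a continuous weight $\phi$ and subsets $E, A \subset X$ with $A$ pluripolar, one has $\phi_{E\cup A}^* = \phi_E^*$.

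\medskip

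The plan is to establish the two inequalities separately, the first being trivial and the second being where all the work lies. Since $E \subset E \cup A$, the defining supremum for $\phi_{E\cup A}$ is taken over a smaller family of psh weights (those bounded by $\phi$ on the larger set $E \cup A$), so $\phi_{E\cup A} \le \phi_E$ pointwise, and passing to usc regularizations gives $\phi_{E\cup A}^* \le \phi_E^*$. For the reverse inequality $\phi_E^* \le \phi_{E\cup A}^*$, the idea is to show that any psh weight $\psi$ competing in the definition of $\phi_E$ can be corrected, at the cost of an arbitrarily small loss, into a psh weight competing in the definition of $\phi_{E\cup A}$. By Proposition~\ref{prop:josef}, since $A$ is pluripolar there is a psh weight $\chi$ on $L$ with $A \subset \{\chi = -\infty\}$; we may normalize so that $\chi \le \phi$ on all of $X$ (subtract a constant, using that $\phi$ is continuous hence bounded below and $\chi$ is usc hence bounded above, after replacing $\chi$ by $\max$ with a very negative constant if needed — actually one just subtracts a large constant). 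Then for $\e \in (0,1)$ consider $\psi_\e := (1-\e)\psi + \e\chi$. This is a psh weight on $L$; on $E$ we have $\psi_\e \le (1-\e)\phi + \e\phi = \phi$, and on $A$ we have $\psi_\e = -\infty \le \phi$, so $\psi_\e \le \phi$ on $E \cup A$ and hence $\psi_\e \le \phi_{E\cup A} \le \phi_{E\cup A}^*$.

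\medskip

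Now let $\e \to 0$: outside the polar set $\{\chi = -\infty\}$, which has Lebesgue measure zero, $\psi_\e = (1-\e)\psi + \e\chi \to \psi$ pointwise (at points where $\psi$ is also finite; the set where $\psi = -\infty$ likewise has measure zero), so $\psi \le \phi_{E\cup A}^*$ holds Lebesgue-a.e. on $X$. Since both sides are psh weights (in particular $\phi_{E\cup A}^*$ is a psh weight because $E\cup A$ is non-pluripolar, being a superset of the non-pluripolar $E$), and psh functions satisfy the submean-value property, an a.e.\ inequality between a psh function and a psh weight upgrades to an everywhere inequality: $\psi \le \phi_{E\cup A}^*$ on all of $X$. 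Taking the supremum over all such competitors $\psi$ gives $\phi_E \le \phi_{E\cup A}^*$, and since $\phi_{E\cup A}^*$ is usc we may pass to the regularization on the left to conclude $\phi_E^* \le \phi_{E\cup A}^*$. Combined with the trivial inequality this yields $\phi_E^* = \phi_{E\cup A}^*$.

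\medskip

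The main (minor) obstacle is the normalization step ensuring $\chi \le \phi$ globally and the a.e.-to-everywhere upgrade; both are standard in pluripotential theory — the first uses compactness of $X$ and boundedness of continuous weights together with the fact that adding a constant to $\chi$ keeps it a psh weight with the same polar set, and the second is the routine fact that if $u$ is psh and $v$ is psh (or more precisely a psh weight, so locally psh) with $u \le v$ a.e., then $u = u^{*} \le v$ everywhere. There are no analytic subtleties specific to the big line bundle case here, which is precisely why the excerpt says this "follows exactly as in the classical setting."
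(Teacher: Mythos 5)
Your proof is correct and follows exactly the route the paper intends: it gives no written proof, deferring to the classical argument (Klimek, p.~194) via Proposition~\ref{prop:josef}, which is precisely your convex-combination perturbation $(1-\e)\psi+\e\chi$ with $\chi$ polar on $A$, followed by the standard a.e.-to-everywhere upgrade for psh functions. The only cosmetic point is that the statement does not assume $E$ non-pluripolar; in that degenerate case $E\cup A$ is also pluripolar and both envelopes are $\equiv+\infty$, so the claimed equality is trivial there and your argument covers the remaining (main) case.
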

\begin{cor}
\label{cor:decr} If $E$ is the increasing union of subsets $E_{j}$,
then $\phi_{E_{j}}^{*}$ decreases pointwise to $\phi_{E}^{*}$ as
$j\rightarrow\infty$. 
\end{cor}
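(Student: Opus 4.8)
The plan is to combine an elementary monotonicity observation with Proposition~\ref{prop:polar}. Since $E_j\subset E_{j+1}\subset E$, the class of psh weights $\psi$ with $\psi\le\phi$ on $E_j$ decreases with $j$ and always contains the class defining $\phi_E$; hence $\phi_{E_j}\ge\phi_{E_{j+1}}\ge\phi_E$, and these inequalities are preserved by usc regularization, so $(\phi_{E_j}^*)_j$ decreases and stays $\ge\phi_E^*$. If $E$ is pluripolar then so is each $E_j$ and all functions in sight are $\equiv+\infty$, so I may assume $E$ non-pluripolar; since a countable union of pluripolar sets is pluripolar (a standard consequence of Proposition~\ref{prop:josef}), each $E_j$ is non-pluripolar for $j$ large, and after discarding finitely many indices I may assume every $\phi_{E_j}^*$ is a genuine psh weight. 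The decreasing limit $\psi:=\lim_j\phi_{E_j}^*$ is then a psh weight, being usc, psh unless $\equiv-\infty$, and dominating the psh weight $\phi_E^*$; moreover $\psi\ge\phi_E^*$.

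It remains to show $\psi\le\phi_E^*$. Here I would invoke the fact that the negligible set $N_j:=\{\phi_{E_j}^*>\phi_{E_j}\}$ is pluripolar (Bedford--Taylor theory, in the present setting \cite{GZ05}), hence so is $N:=\bigcup_j N_j$, and $\phi_{E_j}^*=\phi_{E_j}$ on $X\setminus N$ for every $j$. Fix $x\in E\setminus N$; choose $j_0$ with $x\in E_{j_0}$, so that $x\in E_j$ and hence $\phi_{E_j}^*(x)=\phi_{E_j}(x)\le\phi(x)$ for all $j\ge j_0$, and letting $j\to\infty$ gives $\psi(x)\le\phi(x)$. Thus $\psi\le\phi$ on $E\setminus N$, so by definition of the extremal function $\psi\le\phi_{E\setminus N}\le\phi_{E\setminus N}^*$. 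Finally, since $N$ is pluripolar and $(E\setminus N)\cup N=E\cup N$, two applications of Proposition~\ref{prop:polar} yield $\phi_{E\setminus N}^*=\phi_{(E\setminus N)\cup N}^*=\phi_{E\cup N}^*=\phi_E^*$, whence $\psi\le\phi_E^*$ and equality holds throughout.

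The monotonicity half is immediate; the delicate point---and the one I would expect to be the main obstacle---is passing across the usc regularizations, namely that $x\in E$ controls $\phi_{E_j}(x)$ but not a priori $\phi_{E_j}^*(x)$. This forces one to work on $E\setminus N$ and then restore the removed pluripolar set through Proposition~\ref{prop:polar}; it is worth double-checking that both that proposition and the pluripolarity of negligible sets are genuinely available for a big line bundle, which they are by \cite{GZ05}.
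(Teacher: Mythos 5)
Your argument is correct, and it is precisely the classical one the paper has in mind when it says these facts are proved ``exactly as in the classical setting'' (Klimek, p.~194): monotonicity, passage to the decreasing psh limit, removal of the negligible (hence pluripolar, by Bedford--Taylor) sets $\{\phi_{E_j}<\phi_{E_j}^*\}$, and restoration of the removed set via Proposition~\ref{prop:polar}. All the ingredients you flag (countable unions of pluripolar sets being pluripolar, negligible $\Rightarrow$ pluripolar in the big-line-bundle setting) are indeed available here, as the paper itself uses them, so there is nothing to add.
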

Adapting to our setting a classical notion we introduce \begin{defi} If \(E\) is a non-pluripolar subset of \(X\) and \(\phi\) is a continuous weight, we say that \((E,\phi)\) is \emph{regular} (or that \(E\) is regular with respect to \(\phi\)) iff \(\phi_E\) is upper semi-continuous.
\end{defi} As opposed to the classical case (cf. \cite{Dem} Theorem 15.6),
we are unable to prove that $\phi_{E}$ is \emph{a priori} lower semi-continuous
when $L$ is not ample, hence our definition (note that $\phi_{E}^{*}$
has $-\infty$-poles in the big case, see \cite{BB08a} Remark 1.14
for a short discussion on this issue).

Note that $\phi_{E}$ is usc iff $\phi_{E}^{*}$ satisfies $\phi_{E}^{*}\leq\phi$
on $E$, that is iff the set of psh weights $\psi$ such that $\psi\leq\phi$
on $E$ admits a largest element.

Since $\phi$ is in particular usc, we see that $X$ or in fact any
open subset of $X$ is regular with respect to $\phi$.

Irregularity of a subset $E$ with respect to $\phi$ is always accounted
for by a pluripolar set. Indeed the set of points where $\phi_{E}<\phi_{E}^{*}$
is negligible, hence pluripolar by \cite{BT82} Theorem 7.1. Conversely,
a typical example of an irregular set is obtained by adding to a given
subset a pluripolar one, in view of Proposition \ref{prop:polar}.
For instance in the classical situation, adding to the closed
unit disk of $\C$ an outside point yields an irregular set.

In order to get examples of regular sets in our setting, we shall
say that a compact subset $K\subset X$ is \emph{locally regular}
if for every open set $U$ and every local non-decreasing uniformly bounded sequence $u_{j}$
of psh functions on $U$ such that $u_{j}\leq0$ on $K\cap U$ the usc upper envelope
also satisfies \[
(\sup_{j}u_{j})^{*}\leq0\text{ on }K\cap U.\]
 This notion is independent of the line bundle $L$, and means in
fact that every point of $K$ sits in a small ball $B$ such that
the \emph{relative extremal function} \[
u_{K\cap B,B}:=\sup\{v\leq0\text{ psh on }B,\, v\leq-1\text{ on }K\cap B\}\]
 of $K\cap B$ in $B$ (cf. \cite{Kli91} P.158) satisfies \[
u_{K\cap B,B}^{*}\equiv-1\text{ on }K\cap B.\]
 The following criterion is easily checked. 
\begin{lem}
\label{lem:locreg} Let $K\subset X$ be a non-pluripolar compact
subset and assume that $K$ is locally regular. Then $K$ is regular
with respect to every continuous weight $\phi$ on $L$. 
\end{lem}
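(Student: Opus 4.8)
The plan is to reduce the global, line-bundle statement to the purely local condition defining local regularity, and then to verify that condition directly from the hypothesis. So fix a non-pluripolar compact subset $K\subset X$ which is locally regular, and fix a continuous weight $\phi$ on $L$. The goal is to show $\phi_K$ is usc, which by the remark just above Lemma~\ref{lem:locreg} is equivalent to $\phi_K^*\le\phi$ on $K$. Suppose for contradiction that there is a point $x_0\in K$ with $\phi_K^*(x_0)>\phi(x_0)$; by continuity of $\phi$ we may pick $\e>0$ and a small coordinate ball $B$ around $x_0$, over which $L$ is trivialized, such that $\phi_K^*>\phi(x_0)+\e$ at $x_0$ while $\phi<\phi(x_0)+\e/2$ on $B$.

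Next I would translate into the local picture. Over $B$ the weight $\phi$ becomes an honest continuous function and any psh weight $\psi$ on $L$ with $\psi\le\phi$ on $K$ becomes an ordinary psh function on $B$ bounded above by $\phi$ on $K\cap B$; subtracting the constant $\phi(x_0)+\e/2$ and rescaling, each such $\psi$ gives (after shrinking $B$ a bit so everything is uniformly bounded, using that the family $\phi_K$ is locally bounded above by $\phi_K^*$ which is a psh weight) a psh function $v\le 0$ on $B$ with $v\le -c<0$ on $K\cap B$ for a fixed constant $c$ coming from the gap $\e/2$. The supremum of all these $v$'s is then, up to scaling, the local envelope appearing in the definition of local regularity, so the hypothesis forces its usc regularization to be $\le 0$ (in fact $\le -c$ on $K\cap B$). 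Tracing back, this says $\phi_K^*\le\phi(x_0)+\e/2$ at $x_0$ — wait, more carefully, $\phi_K^*(x_0)\le\phi(x_0)+\e/2-c'<\phi(x_0)+\e$, contradicting the choice of $x_0$. The bookkeeping with the two thresholds $\e/2$ and $\e$ is exactly what makes the strict inequality survive the usc regularization.

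The one technical point that needs care — and what I'd flag as the main obstacle — is ensuring that the local envelope of the rescaled family $\{v\}$ really is dominated by (a rescaling of) the relative extremal function $u_{K\cap B,B}$ that appears in the definition of local regularity, i.e. that the family is \emph{uniformly bounded below} on a fixed smaller ball so that the comparison principle / monotone envelope machinery applies. This is where non-pluripolarity of $K$ is used: $\phi_K^*$ is a genuine psh weight (by the discussion in Section~\ref{sec:polar}, via Guedj-Zeriahi), so on a slightly smaller ball $B'\Subset B$ it is bounded, and every competitor $\psi\le\phi_K^*$ is then uniformly bounded above there; combined with $\psi\le\phi$ on $K$ one gets the uniform two-sided control needed. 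Once this is in place, the local-regularity hypothesis applies verbatim to the monotone envelope of any countable subfamily, and a standard Choquet-type argument (the envelope $\sup\psi$ agrees a.e., hence everywhere after regularization, with the sup over a countable subfamily) upgrades this to the full family.

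Finally I would assemble: the contradiction derived above is valid at the arbitrary point $x_0\in K$ we assumed bad, so no such point exists, $\phi_K^*\le\phi$ on $K$, and therefore $\phi_K$ is usc, i.e. $(K,\phi)$ is regular. Since $\phi$ was an arbitrary continuous weight, $K$ is regular with respect to every continuous weight on $L$, as claimed. The whole argument is local and uses nothing about $L$ beyond the existence of the psh weight $\phi_K^*$ guaranteed in Section~\ref{sec:polar}, which is why the conclusion is uniform in $\phi$.
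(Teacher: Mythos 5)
Your argument is correct. The paper in fact offers no proof of this lemma (it is introduced with ``The following criterion is easily checked''), and the localization you carry out --- reducing regularity to $\phi_K^*\le\phi$ on $K$, fixing a small ball where $\phi$ is nearly constant, using non-pluripolarity (via the fact that $\phi_K^*$ is a genuine psh weight, hence locally bounded above) to get a uniform upper bound on all competitors, rescaling them into the family defining the local envelope, and invoking local regularity (with Choquet's lemma to pass to a non-decreasing countable subfamily, as the sequence form of the definition requires) --- is exactly the standard verification the authors leave to the reader. Two cosmetic points: ``uniformly bounded below'' should read ``uniformly bounded above'' (as your own next clause makes clear), and the definite gap $c>0$ on $K\cap B'$ needs either shrinking the ball so that $\phi<\phi(x_0)+\varepsilon/4$ there or taking the maximum of $\phi$ over the compact set $K\cap\overline{B'}$; neither affects the argument.
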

The converse implication already fails already in the classical situation,
cf. \cite{Sad81}.

We have the so-called accessibility criterion for regularity.
\begin{prop}
If $K\subset X$ is a compact subset of $X$ and if for each point
$x\in\partial K$ in the topological boundary there exists a real
analytic arc $\gamma:[0,1]\rightarrow X$ such that $\gamma(0)=z$
whereas $\gamma(]0,1])$ is contained in the topological interior
$K^{0}$, then $K$ is locally regular. 
\end{prop}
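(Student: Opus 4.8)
The plan is to verify directly the defining property of local regularity recalled above. Fix an open set $U\subset X$ and a non-decreasing, uniformly bounded sequence $(u_j)$ of psh functions on $U$ with $u_j\le 0$ on $K\cap U$, and put $u:=\sup_j u_j$. I must show that the psh function $u^*$, the usc regularization of $u$, satisfies $u^*\le 0$ on all of $K\cap U$. Since $K$ is closed I would split $K\cap U=(K^0\cap U)\cup(\partial K\cap U)$ and treat the interior and the topological boundary separately.

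The interior part is immediate: if $x_0\in K^0\cap U$ then a punctured neighbourhood of $x_0$ lies in $K^0\cap U$, where $u\le 0$ because every $u_j\le 0$ there; as $u^*(x_0)=\limsup_{x\to x_0}u(x)$, this gives $u^*\le 0$ on $K^0\cap U$. Now let $x_0\in\partial K\cap U$. By hypothesis there is a real-analytic arc $\gamma\colon[0,1]\to X$ with $\gamma(0)=x_0$ and $\gamma((0,1])\subset K^0$ (note $\gamma$ is non-constant near $0$, since $x_0\notin K^0$). In a coordinate chart around $x_0$ the coordinate functions of $\gamma$ are real-analytic in $t$ near $0$, hence extend holomorphically, yielding a holomorphic map $\tilde\gamma\colon\mathbb{D}_\rho\to X$ from a small disc $\mathbb{D}_\rho\subset\C$ whose restriction to $[0,\rho)\subset\R$ agrees with $\gamma$. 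Shrinking $\rho$, I may assume $\tilde\gamma(\mathbb{D}_\rho)\subset U$ and $\gamma((0,\rho))\subset K^0\cap U$ (the latter because $\gamma(t)\to x_0\in U$ as $t\to0^+$ while $\gamma(t)\in K^0$ for $t>0$). The composition $h:=u^*\circ\tilde\gamma$ is then subharmonic on $\mathbb{D}_\rho$ — or identically $-\infty$, in which case $u^*(x_0)=-\infty$ and we are done. Otherwise $h(t)=u^*(\gamma(t))\le 0$ for real $t\in(0,\rho)$ by the interior case, and since the real segment $(0,\rho)$ is non-thin at $0$, we get $u^*(x_0)=h(0)\le\limsup_{t\to0^+}h(t)\le 0$.

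Combining the two cases yields $u^*\le 0$ on $K\cap U$, which is exactly local regularity of $K$. The single non-formal ingredient — and what I regard as the crux — is this last reduction of an $n$-variable pluripotential estimate at $x_0$ to one-variable potential theory along the complexified arc $\tilde\gamma$, together with the classical fact that a non-degenerate real line segment is non-thin at each of its endpoints (equivalently, that the tip of a slit is a regular boundary point of the slit disc); I would cite Klimek \cite{Kli91} for this. A minor technical point to keep in mind is that $u=\sup_j u_j$ need not be upper semicontinuous, so every estimate has to be run with the psh function $u^*$, the inequality $u\le 0$ on $K\cap U$ being used only to obtain $u^*\le 0$ on the open set $K^0\cap U$ in the first step.
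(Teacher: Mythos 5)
Your proof is correct and follows essentially the same route as the paper: after the easy interior case, you complexify the real-analytic arc to a holomorphic disc and invoke precisely the one-variable fact the paper cites, namely that a subharmonic function near $0\in\C$ satisfies $h(0)\le\limsup_{t\to0^+,\,t\in\,]0,\rho[}h(t)$ (non-thinness of a segment at its endpoint). The paper merely states this reduction as well known; you have supplied the routine details (psh-ness of $u^*$, restriction along $\tilde\gamma$, handling of the $-\infty$ case) correctly.
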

This is well-known and follows from the fact that any subharmonic
function $u$ defined around $[0,1]\subset\C$ satisfies \[
u(0)=\limsup_{z\rightarrow0,\, z\in]0,1]}u(z).\]

\begin{cor}
\label{cor:smooth} Let $\Omega$ and $M$ be a smoothly bounded
domain and a real analytic $n$-dimensional
totally real submanifold respectively. Then $\overline{\Omega}$
and $M$ are locally regular. \end{cor}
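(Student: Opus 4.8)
The plan is to reduce the corollary to the accessibility criterion for local regularity stated in the preceding proposition, by exhibiting, for every boundary point, a real analytic arc that enters the topological interior. The two cases are handled separately but by the same underlying idea.

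First consider the smoothly bounded domain $\overline{\Omega}$. Let $x\in\partial\Omega$. Since $\Omega$ has smooth boundary, there is a defining function $\rho$ near $x$ with $\rho(x)=0$, $d\rho(x)\neq 0$, and $\Omega=\{\rho<0\}$ locally. Pick any vector $v$ with $d\rho(x)\cdot v<0$; then the straight segment $\gamma(t)=x+tv$ (in a coordinate chart) satisfies $\gamma(0)=x$ and, by a first-order Taylor expansion of $\rho\circ\gamma$, $\rho(\gamma(t))<0$ for all small $t>0$, so $\gamma(]0,\e])\subset\Omega\subset\overline{\Omega}^{0}$. Shrinking to $[0,1]$ by reparametrization, $\gamma$ is a real analytic arc of the required type, hence $\overline{\Omega}$ is locally regular by the accessibility criterion.

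For the totally real real analytic submanifold $M$ the topological interior is empty, so the accessibility criterion does not apply and one must verify the local regularity condition directly. The key geometric input is that since $M$ is real analytic, totally real and of maximal dimension $n$, it is locally biholomorphic, via a real analytic change of coordinates, to an open piece of $\R^{n}\subset\C^{n}$: concretely, near a point $p\in M$ one can straighten $M$ so that it becomes $\{\operatorname{Im} z_1=\dots=\operatorname{Im} z_n=0\}$. Thus it suffices to check local regularity for $\R^{n}\subset\C^{n}$, i.e.\ that for a small ball $B$ the relative extremal function $u_{\R^{n}\cap B,B}^{*}$ equals $-1$ on $\R^{n}\cap B$. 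This is a classical fact: one proves it by a one-variable slicing argument, using that for a subharmonic function $u$ on a neighbourhood of $[-1,1]\subset\C$ one has $u(x)=\limsup_{\C\ni z\to x}u(z)$ for every real $x$ — equivalently, $\R$ is non-thin at each of its points — applied to the restriction of the relevant psh functions to complex lines meeting $\R^{n}$ transversally (or to the slices $z\mapsto(\text{fixed}, z)$). Combined with the invariance of local regularity under real analytic (indeed under biholomorphic) coordinate changes, this gives local regularity of $M$.

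The main obstacle is the second case: one must make precise both that local regularity is a biholomorphic (and real analytic coordinate) invariant — which follows since psh-ness and the defining envelopes transform correctly under biholomorphisms — and that the real analytic totally real $M$ of full dimension can be locally straightened to $\R^n$; the latter is where real analyticity is genuinely used, as a merely smooth totally real submanifold need not be locally biholomorphic to $\R^n$. Once these two points are granted, the reduction to the one-dimensional statement about non-thinness of $\R$ at its points is routine, and for $\overline{\Omega}$ the argument via the accessibility criterion is immediate. Hence in both cases $\overline{\Omega}$ and $M$ are locally regular, proving the corollary.
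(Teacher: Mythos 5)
Your proposal is correct and follows essentially the same route as the paper: the case of $\overline{\Omega}$ is handled via the accessibility criterion (an inward real analytic arc at each boundary point, as in Klimek, Cor.\ 5.3.13), and the case of $M$ is reduced, by locally straightening the real analytic totally real $n$-dimensional submanifold in holomorphic coordinates, to the classical fact that $\R^{n}$ is locally regular in $\C^{n}$. You simply spell out the details (the defining-function argument and the one-variable slicing behind the regularity of $\R^{n}$) that the paper leaves implicit.
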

\begin{proof}
The first assertion follows from the accessibility criterion just
as in \cite{Kli91} Cor 5.3.13 and the second from the fact that
$\R^{n}$ is locally regular in $\C^{n}$.
\end{proof}
It seems to be unknown whether the real analyticity assumption on
$M$ can be relaxed to $C^{\infty}$ regularity.

\subsection{Bernstein-Markov and determining measures}

\label{sec:BM} Recall from the introduction that given a weighted
compact subset $(K,\phi)$ we say that a probability measure $\mu$
on $K$ is \emph{Bernstein-Markov} for $(K,\phi)$ iff the distortion
between the $L^{\infty}(K,k\phi)$ and $L^{2}(\mu,k\phi)$ norms on
$H^{0}(kL)$ has sub-exponential growth as $k\rightarrow\infty$,
that is:

For each $\e>0$ there exists $C>0$ such that \begin{equation}
\sup_{K}|s|^{2}e^{-2k\phi}\leq Ce^{\e k}\int|s|^{2}e^{-2k\phi}d\mu.\label{equ:BMsections}\end{equation}
 for each $k$ and each section $s\in H^{0}(kL)$.

We are going to obtain a characterization of the following stronger
property. \begin{defi}\label{defi:BMpsh} Let \((K,\phi)\) be a weighted compact subset and let \(\mu\) be a non-pluripolar probability measure on \(K\). Then \(\mu\) will be said to be \emph{Bernstein-Markov with respect to psh weights} for \((K,\phi)\) iff for each \(\e>0\) there exists \(C>0\) such that 
\begin{equation}\label{equ:BMpsh}
\sup_K e^{p(\psi-\phi)}\leq Ce^{\e p}\int e^{p(\psi-\phi)}d\mu.
\end{equation}
for all \(p\ge 1\) and all psh weights \(\psi\) on \(L\). 
\end{defi} One virtue of Definition \ref{defi:BMpsh} is that it also makes
sense in the more general situation of $\theta$-psh functions with
respect to a smooth $(1,1)$-form $\theta$ as considered for example
in \cite{GZ05,BEGZ08}. It is immediate to see that $\mu$ is Bernstein-Markov
for $(K,\phi)$ (in the previous sense, i.e.~with respect to sections)
if it is Bernstein-Markov with respect to psh weights (apply the definition
to $\psi:=\frac{1}{k}\log|s|$ and $p:=2k$).

We will only consider \emph{non-pluripolar} measures $\mu$, that
is measures putting no mass on pluripolar subsets. Note that the equilibrium
measure $\eq(K,\phi)$ is non-pluripolar, since it is defined as the
non-pluripolar Monge-Amp{\`e}re measure of $\phi_{K}^{*}$ (cf. \cite{BB08a}).

Following essentially \cite{Sic88} we shall say that $\mu$ is \emph{determining}
for $(K,\phi)$ iff the following equivalent properties hold (compare \cite{Sic88}
Theorem A). 
\begin{prop}
\label{prop:deter} Let $(K,\phi)$ be a weighted compact subset and
let $\mu$ be a non-pluripolar probability measure on $K$. Then the
following properties are equivalent: 
\begin{enumerate}
\item[(i)] $(K,\phi)$ is regular and each Borel subset $E\subset K$ such
that $\mu(K-E)=0$ satisfies $\phi_{E}=\phi_{K}.$
\item[(ii)] For each psh weight $\psi$ we have \begin{equation}
\psi\leq\phi\,\mu\text{-a.e.}\Longrightarrow\psi\leq\phi\text{ on }K\label{equ:deter}\end{equation}

\item[(iii)] $(K,\phi)$ is regular and for each $k$ and each section $s\in H^{0}(kL)$
we have \[
\Vert s\Vert_{L^{\infty}(\mu,k\phi)}=\sup_{K}|s|_{k\phi}.\]

\end{enumerate}
\end{prop}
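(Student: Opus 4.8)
The plan is to prove the cycle of implications $(i)\Rightarrow(ii)\Rightarrow(iii)\Rightarrow(i)$, exploiting throughout the characterization of $\phi_E^*$ by sections (Proposition~\ref{prop:equisec}) and the standard pluripotential-theoretic facts recalled above.

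For $(i)\Rightarrow(ii)$: suppose $\psi$ is a psh weight with $\psi\le\phi$ $\mu$-a.e. Let $E\subset K$ be the Borel set where $\psi\le\phi$; then $\mu(K\setminus E)=0$, so by $(i)$ we have $\phi_E=\phi_K$. Since $\psi$ is a psh weight dominated by $\phi$ on $E$, by definition $\psi\le\phi_E=\phi_K$, and since $(K,\phi)$ is regular, $\phi_K=\phi_K^*\le\phi$ on $K$; hence $\psi\le\phi$ on $K$, which is exactly \eqref{equ:deter}.

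For $(ii)\Rightarrow(iii)$: first note $(ii)$ forces regularity, because $\phi_K^*$ is a psh weight with $\phi_K^*=\phi$ $\mu$-a.e. (as $\mu$ is concentrated on $K$ and the set where $\phi_K<\phi_K^*$ is pluripolar, hence $\mu$-null), so $(ii)$ gives $\phi_K^*\le\phi$ on $K$, i.e. $(K,\phi)$ is regular. For the norm equality, the inequality $\Vert s\Vert_{L^\infty(\mu,k\phi)}\le\sup_K|s|_{k\phi}$ is trivial since $\mu$ is supported on $K$. For the reverse, set $\psi:=\frac1k\log|s|+\frac1k\log\Vert s\Vert_{L^\infty(\mu,k\phi)}^{-1}$ after normalizing so that $\Vert s\Vert_{L^\infty(\mu,k\phi)}=1$; then $\psi$ is a psh weight with $|s|_{k\phi}\le1$ $\mu$-a.e., i.e. $\psi\le\phi$ $\mu$-a.e., so by $(ii)$, $\psi\le\phi$ on $K$, which reads $\sup_K|s|_{k\phi}\le1=\Vert s\Vert_{L^\infty(\mu,k\phi)}$.

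For $(iii)\Rightarrow(i)$: regularity is assumed, so we only need the statement about Borel subsets $E$ with $\mu(K\setminus E)=0$. Trivially $\phi_E\ge\phi_K$ since $E\subset K$; for the reverse we use Proposition~\ref{prop:equisec} applied to $E$: $\phi_E^*$ is the usc upper envelope of $\frac1k\log|s|$ over all $k\ge1$ and $s\in H^0(kL)$ with $\sup_E|s|_{k\phi}\le1$. For such a section, $\sup_E|s|_{k\phi}\le1$ means $|s|_{k\phi}\le1$ $\mu$-a.e. (as $\mu(K\setminus E)=0$), i.e. $\Vert s\Vert_{L^\infty(\mu,k\phi)}\le1$, whence by $(iii)$, $\sup_K|s|_{k\phi}\le1$, so $s$ is also a valid competitor for $\phi_K^*$. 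Taking the usc envelope gives $\phi_E^*\le\phi_K^*$, hence $\phi_E^*=\phi_K^*$, and then by regularity of $(K,\phi)$ this descends to $\phi_E=\phi_K$ on $K$ (the two envelopes agree off a pluripolar set, and regularity pins down the $K$-values). I expect the main subtlety to be the careful bookkeeping between the ``on $E$'' conditions and the ``$\mu$-a.e.'' conditions — in particular making sure that $\sup_E|s|_{k\phi}\le1$ is genuinely equivalent to the $\mu$-essential sup being $\le1$, which relies precisely on $\mu(K\setminus E)=0$ together with continuity of $|s|_{k\phi}$ on $K$, and the analogous point for passing from $\phi_E^*=\phi_K^*$ back to $\phi_E=\phi_K$ using regularity.
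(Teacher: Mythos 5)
Your proof is correct and follows essentially the same route as the paper: the cycle $(i)\Rightarrow(ii)\Rightarrow(iii)\Rightarrow(i)$ with the Borel set $\{\psi\le\phi\}$, the pluripolarity of $\{\phi_K<\phi_K^*\}$ to force regularity, the substitution $\psi=\frac1k\log|s|$, and Proposition~\ref{prop:equisec} for $(iii)\Rightarrow(i)$. The only cosmetic remark is that in $(i)\Rightarrow(ii)$ the final step $\psi\le\phi_K\Rightarrow\psi\le\phi$ on $K$ needs no regularity, since $\phi_K\le\phi$ on $K$ holds by definition of the envelope.
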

\begin{proof}
Assume that (i) holds and let $\psi$ be a psh weight such that $\psi\leq\phi$
$\mu$-a.e. Consider the Borel subset \[
E:=\{x\in K|\psi(x)\leq\phi(x)\}\subset K.\]
 We then have $\mu(K-E)=0$ by assumption, hence $\phi_{E}=\phi_{K}$.
On the other hand we have $\psi\leq\phi$ on $E$ by definition of
$E$, hence $\psi\leq\phi_{E}=\phi_{K}$ on $X$, and we infer $\psi\leq\phi$
on $K$. We have thus shown that (i)$\Rightarrow$(ii).

Assume that (ii) holds. The set $\{\phi_{K}<\phi_{K}^{*}\}$ is negligible
hence pluripolar by Bedford-Taylor's theorem, thus it has $\mu$-measure
$0$ since $\mu$ is non-pluripolar by assumption. We thus have $\phi_{K}^{*}=\phi_{K}\leq\phi$
$\mu$-a.e., and (ii) implies $\phi_{K}^{*}\leq\phi$ everywhere on
$K$, which means that $(K,\phi)$ is regular. On the other hand it
is straightforward to see that (ii) is equivalent to \[
\Vert e^{\psi-\phi}\Vert_{L^{\infty}(\mu)}=\sup_{K}e^{\psi-\phi}\]
 for all psh weights $\psi$, and (ii)$\Rightarrow$(iii) follows by
applying this to $\psi:=\frac{1}{k}\log|s|$.

Now assume that (iii) holds and let $E\subset K$ be a Borel set such
that $\mu(K-E)=0$. Since $\mu(K-E)=0$ each section $s\in H^{0}(kL)$
such that $\sup_{E}|s|_{k\phi}\leq1$ atisfies in particular $\Vert s\Vert_{L^{\infty}(\mu,k\phi)}\leq1$,
hence $\sup_{K}|s|e^{-k\phi}\leq1$ by (iii), i.e. \[
\frac{1}{k}\log|s|\leq\phi_{K}.\]
 By Proposition \ref{prop:equisec} we thus get \[
\phi_{E}\leq\phi_{E}^{*}\leq\phi_{K}^{*}=\phi_{K}\text{ on }K,\]
 hence $\phi_{E}=\phi_{K}$ since $E\subset K$ implies $\phi_{K}\leq\phi_{E}$. \end{proof}
\begin{cor}
\label{cor:support} Let $(K,\phi)$ be a regular weighted subset.
Then every non-pluripolar probability measure $\mu$ on $K$ such
that $\supp\mu=K$ is determining for $(K,\phi)$. \end{cor}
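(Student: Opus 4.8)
The plan is to deduce Corollary \ref{cor:support} directly from Proposition \ref{prop:deter}, in particular from the equivalence (i)$\Leftrightarrow$(ii) (or (i)$\Leftrightarrow$(iii)); since $(K,\phi)$ is assumed regular and $\mu$ is assumed non-pluripolar, it suffices to verify property (i), namely that every Borel subset $E\subset K$ with $\mu(K-E)=0$ satisfies $\phi_E=\phi_K$.

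So let $E\subset K$ be Borel with $\mu(K-E)=0$. Since $\supp\mu=K$, the complement $K-E$ has empty interior in $K$: indeed if $U$ were a nonempty relatively open subset of $K$ contained in $K-E$, then $\mu(U)=0$ would contradict $\supp\mu=K$. Hence $E$ is dense in $K$. The key point is then that the extremal function is insensitive to passing from a set to a dense subset in this situation: I would argue that $\phi_E=\phi_K$ as follows. The inequality $\phi_K\le\phi_E$ is automatic from $E\subset K$ and the definition of the extremal function (fewer constraints give a larger sup). For the reverse inequality, take any psh weight $\psi$ with $\psi\le\phi$ on $E$; I want $\psi\le\phi$ on all of $K$. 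Here I use that $K-E$ has $\mu$-measure zero and $\mu$ is non-pluripolar together with regularity, but more cleanly: $\psi\le\phi$ on $E$ gives $\psi\le\phi_E^*\le\phi$ $\mu$-a.e.\ on $K$ (since $\{\phi_E<\phi_E^*\}$ is pluripolar by \cite{BT82} and has $\mu$-measure zero, and on $E$ we have $\phi_E\le\phi$), whence by property (ii) of Proposition \ref{prop:deter} — which we are trying to establish — this is circular. To avoid circularity I instead argue directly: since $\psi-\phi$ is usc and $\le 0$ on the dense subset $E$ of $K$, and $\mu$ charges every nonempty open subset of $K$, the set $\{x\in K:\psi(x)>\phi(x)\}$ is open in $K$ (by upper semicontinuity of $\psi-\phi$, wait — $\phi$ continuous, $\psi$ usc, so $\psi-\phi$ is usc, so $\{\psi-\phi>0\}$ is \emph{not} obviously open).

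Let me restructure: the clean route is to use regularity of $(K,\phi)$ from the start. Since $(K,\phi)$ is regular, $\phi_K$ is usc and $\phi_K=\phi_K^*$. Given a psh weight $\psi$ with $\psi\le\phi$ on $E$, consider the negligible, hence pluripolar, set $P:=\{\phi_E<\phi_E^*\}$; since $\mu$ is non-pluripolar, $\mu(P)=0$, and since $\psi\le\phi_E\le\phi$ on $E$ and $\mu(K-E)=0$, we get $\psi\le\phi_E^*$ $\mu$-a.e.\ on $K$. Now $\phi_E^*\le\phi_K^*=\phi_K$ (as $E\subset K$), and on $E$ we still have $\psi\le\phi$. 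The issue is upgrading from "$\mu$-a.e." to "everywhere on $K$" for the inequality $\psi\le\phi$; this is precisely what Proposition \ref{prop:deter}(ii) would give, so the honest structure is: prove property (i), feed it into Proposition \ref{prop:deter}, and get (ii) and (iii) for free. Thus I must prove $\phi_E=\phi_K$ by a self-contained pluripotential-theoretic argument. That argument: for a psh weight $\psi\le\phi$ on $E$, we have $\psi\le\phi_E^*\le\phi_K$ everywhere on $X$ by Proposition \ref{prop:equisec}-type reasoning (or directly from the definition, $\psi\le\phi_E$, hence $\psi\le\phi_E^*$). It remains to see $\phi_E=\phi_K$, i.e.\ that already $\phi_E\le\phi_K$, i.e.\ every such $\psi$ satisfies $\psi\le\phi$ on $K\setminus E$ too. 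Since $\psi\le\phi_E^*$ and $\phi_E^*\le\phi$ holds $\mu$-a.e.\ on $K$ (because $\phi_E^*=\phi_E\le\phi$ off the pluripolar set $P$, and $\mu(P)=0$, $\mu(K-E)=0$), the usc function $\phi_E^*-\phi$ is $\le 0$ on the $\mu$-full, hence (by $\supp\mu=K$) dense subset $\{x\in K:\phi_E^*(x)\le\phi(x)\}$ of $K$; but $\phi_E^*-\phi$ is usc, so its set of points $\le 0$ is not closed in general — instead I use that $\phi_E^*$ is a psh weight and invoke: a psh weight dominated by a continuous weight $\phi$ on a dense subset need not be dominated everywhere.

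Given these subtleties, the cleanest correct plan — and I believe the intended one — is this. First, observe $\phi_K\le\phi_E$ trivially. Second, to get $\phi_E\le\phi_K$, let $\psi$ be any psh weight with $\psi\le\phi$ on $E$; by \cite{BB08a} (or Proposition \ref{prop:equisec}) it suffices to treat $\psi=\frac1k\log|s|$ for $s\in H^0(kL)$ with $\sup_E|s|_{k\phi}\le1$. Then $|s|e^{-k\phi}\le1$ on $E$, hence on $\overline E=K$ by continuity of $|s|e^{-k\phi}$ and density of $E$ in $K$ (which holds because $\mu(K-E)=0$ and $\supp\mu=K$). Therefore $\frac1k\log|s|\le\phi$ on $K$, which gives $\psi\le\phi_K$ on $X$ for all such $\psi$, and taking the sup (and using Proposition \ref{prop:equisec} to reduce the general psh weight to these section-weights, noting $(K,\phi)$ is regular so $\phi_K=\phi_K^*$) yields $\phi_E^*\le\phi_K$, hence $\phi_E=\phi_K$. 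This establishes property (i) of Proposition \ref{prop:deter}, so $\mu$ is determining for $(K,\phi)$, which is the assertion. The main obstacle is exactly the point where I replaced a general psh weight by a section-weight: one must be careful that Proposition \ref{prop:equisec} requires the usc regularization, and that it is legitimate here because $(K,\phi)$ is regular (so $\phi_K$ is already usc), which is why the hypothesis "$(K,\phi)$ regular" is indispensable; the density of $E$ in $K$ is the other load-bearing observation, and it is an immediate consequence of $\supp\mu=K$ together with $\mu(K-E)=0$.
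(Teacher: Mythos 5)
Your final plan is correct and rests on the same key observation as the paper: since $\supp\mu=K$, any Borel set $E\subset K$ with $\mu(K-E)=0$ is dense in $K$, and $|s|_{k\phi}$ is continuous, so a section bounded by $1$ on $E$ in the weighted sup-norm is bounded by $1$ on all of $K$. The only real difference is the endpoint you aim for: you verify condition (i) of Proposition \ref{prop:deter} (that $\phi_E=\phi_K$ for every such $E$), which obliges you to invoke Proposition \ref{prop:equisec} and the regularity of $(K,\phi)$ (so that $\phi_K^*=\phi_K$) to pass from section-weights to general psh weights; the paper stops one step earlier and verifies condition (iii) directly, since the density observation already gives $\Vert s\Vert_{L^\infty(\mu,k\phi)}=\sup_K|s|_{k\phi}$ for every $s\in H^0(kL)$, regularity being part of the hypothesis. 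Your detour through (i) thus reproves, in essence, the implication (iii)$\Rightarrow$(i) already contained in the proof of Proposition \ref{prop:deter} — not wrong, just redundant. The earlier attempts in your write-up (upgrading ``$\psi\le\phi$ $\mu$-a.e.'' to ``$\psi\le\phi$ on $K$'' via (ii), or the claimed openness of $\{\psi>\phi\}$) are indeed circular or unfounded, as you yourself note, and are rightly discarded; only the final section-weight argument carries the proof.
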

\begin{proof}
Since $K=\supp\mu$, the complement of every $\mu$-negligible subset
$A\subset K$ is dense in $K$, thus the $\mu$-essential supremum
of every $f\in C^{0}(K)$ coincides with its supremum on $K$. Since
$|s|_{k\phi}$ is continuous for every $s\in H^{0}(kL)$, it follows
that condition (iii) of Proposition \ref{prop:deter} is satisfied. \end{proof}
\begin{prop}
\label{prop:determining} Let $(K,\phi)$ be a weighted compact subset.
Then the equilibrium measure $\eq(K,\phi)$ is determining for $(K,\phi)$
iff $(K,\phi)$ is regular. \end{prop}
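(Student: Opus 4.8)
The plan is to characterize regularity via the criterion of Proposition \ref{prop:deter}, applied to $\mu=\eq(K,\phi)$. Since $\eq(K,\phi)$ is a non-pluripolar probability measure on $K$ (as noted above, being the non-pluripolar Monge-Amp\`ere measure of $\phi_K^*$), Proposition \ref{prop:deter} applies, and it suffices to show that condition (ii) there holds for $\mu=\eq(K,\phi)$ precisely when $(K,\phi)$ is regular. One direction is essentially formal: if $\eq(K,\phi)$ is determining for $(K,\phi)$, then in particular $(K,\phi)$ is regular, by the implication (ii)$\Rightarrow$(iii) (or directly from clause (i)) of Proposition \ref{prop:deter}. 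So the content is the converse: assuming $(K,\phi)$ regular, I must verify condition (ii), i.e. that any psh weight $\psi$ with $\psi\leq\phi$ holding $\eq(K,\phi)$-a.e. actually satisfies $\psi\leq\phi$ everywhere on $K$.

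For this, recall the two key facts from \cite{BB08a} recorded in the introduction: the equilibrium measure is $M^{-1}\MA(\phi_K^*)$, it is concentrated on $K$, and one has $\phi=\phi_K^*$ a.e. with respect to it. So suppose $\psi$ is a psh weight with $\psi\leq\phi$ $\eq(K,\phi)$-a.e.; combining with $\phi=\phi_K^*$ a.e.\ w.r.t.\ $\eq(K,\phi)$, we get $\psi\leq\phi_K^*$ holding $\MA(\phi_K^*)$-a.e. Now I would invoke the domination principle for the (non-pluripolar) Monge-Amp\`ere operator in the big line bundle setting — this is exactly the type of comparison-principle statement established in \cite{BB08a} for psh weights with minimal singularities: if $\psi$ and $\phi_K^*$ are psh weights, $\phi_K^*$ has minimal singularities, and $\psi\leq\phi_K^*$ holds $\MA(\phi_K^*)$-a.e., then $\psi\leq\phi_K^*$ everywhere on $X$. (If $\psi$ has more singularities than $\phi_K^*$ this is automatic off the poles; in general one reduces to the minimal-singularities case by replacing $\psi$ with $\max(\psi,\phi_K^*-C)$ and letting $C\to\infty$, or by a standard regularization, since the difference set is pluripolar hence negligible.) This yields $\psi\leq\phi_K^*$ on all of $X$; restricting to $K$ and using regularity of $(K,\phi)$, which by the remark after the definition means exactly $\phi_K^*\leq\phi$ on $K$, we conclude $\psi\leq\phi$ on $K$, which is condition (ii). Then Proposition \ref{prop:deter} gives that $\eq(K,\phi)$ is determining.

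The main obstacle is the domination/comparison step: one needs the Monge-Amp\`ere measure of $\phi_K^*$ to "see enough" of $X$ to force the global inequality $\psi\leq\phi_K^*$ from an a.e.\ inequality. In the ample case this is the classical domination principle; in the big case it is the corresponding statement for psh weights with minimal singularities from \cite{BB08a} (where $\MA$ is taken in the non-pluripolar sense on the bounded locus), and the reduction of a general $\psi$ to one with minimal singularities via truncation $\max(\psi,\phi_K^*-C)$ should be handled with the usual care about pluripolar exceptional sets — precisely the place where non-pluripolarity of $\eq(K,\phi)$ is used, so that "$\psi\le\phi$ $\eq(K,\phi)$-a.e." is not vacuous. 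Everything else is bookkeeping around Proposition \ref{prop:deter} and the basic properties of $\phi_K^*$ and $\eq(K,\phi)$.
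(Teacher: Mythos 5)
Your argument is correct and follows essentially the same route as the paper: the converse direction via Proposition \ref{prop:deter}, and the forward direction by combining $\phi=\phi_K^*$ a.e.\ for $\eq(K,\phi)$ with the domination principle applied to $\phi_K^*$ (which has minimal singularities) and the regularity inequality $\phi_K^*\leq\phi$ on $K$. The only difference is cosmetic: the paper cites the domination principle from \cite{BEGZ08} (Corollary 2.5), stated for an arbitrary psh weight $\psi'$ dominated a.e.\ by a minimal-singularity weight, so your parenthetical truncation $\max(\psi,\phi_K^*-C)$ is unnecessary but harmless.
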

\begin{proof}
Suppose that $(K,\phi)$ is regular. The domination principle, itself
an easy consequence of the so-called comparison principle, states
that given two psh weights $\psi,\psi'$ on $L$ such that $\psi$
has minimal singularities we have ($\psi'\leq\psi$ a.e.~for $\MA(\psi)$)$\Rightarrow$($\psi'\leq\psi$
on $X$) (cf. \cite{BEGZ08} Corollary 2.5 for a proof in our context).
Applying this to $\psi:=\phi_{K}^{*}$ immediately yields the result
since we have $\phi_{K}^{*}\leq\phi$ on $K$ by the regularity assumption.
The converse follows from Proposition \ref{prop:deter}. 
\end{proof}
Note that Proposition \ref{prop:determining} is not implied by Corollary \ref{cor:support}
since the support of $\eq(K,\phi)$ coincides with the \emph{Silov
boundary} of $(K,\phi)$ when the latter is regular (cf. \cite{BT87}
Theorem 7.1 in the classical case).

We now introduce the following version of the Bernstein-Markov property
for psh weights.

We can now state our main result in this section, which generalizes
in particular \cite{Sic88}. 
\begin{thm}
\label{thm:BM} Let $(K,\phi)$ be a weighted compact subset and let
$\mu$ be a non-pluripolar probability measure on $K$. Then the following
properties are equivalent. 
\begin{enumerate}
\item[(i)] $(K,\phi)$ is regular and $\mu$ is Bernstein-Markov for $(K,\phi)$
\item[(ii)] $\mu$ is Bernstein-Markov with respect to psh weights for $(K,\phi)$. 
\item[(iii)] $\mu$ is determining for $(K,\phi)$. 
\end{enumerate}
\end{thm}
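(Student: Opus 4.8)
The plan is to close the cycle (iii) $\Rightarrow$ (ii) $\Rightarrow$ (i) $\Rightarrow$ (iii). Of these implications, (ii) $\Rightarrow$ (i) is essentially free: taking $\psi = \frac{1}{k}\log|s|$ and $p = 2k$ in \eqref{equ:BMpsh} gives the Bernstein-Markov inequality \eqref{equ:BMsections}, as already noted after Definition \ref{defi:BMpsh}, while regularity of $(K,\phi)$ is part of the hypothesis being reproduced — so the real content is to extract regularity from (ii). In fact it is cleaner to argue (ii) $\Rightarrow$ (iii) directly: if $\psi$ is a psh weight with $\psi \le \phi$ $\mu$-a.e., then $\int e^{p(\psi - \phi)}\,d\mu \le 1$ for all $p$, so \eqref{equ:BMpsh} gives $\sup_K e^{p(\psi-\phi)} \le C e^{\e p}$, i.e. $\sup_K(\psi - \phi) \le \e + \frac{\log C}{p}$; letting $p \to \infty$ and then $\e \to 0$ yields $\psi \le \phi$ on $K$, which is condition (ii) of Proposition \ref{prop:deter}, hence $\mu$ is determining. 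So the work is concentrated in the remaining implication.

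For (i) $\Rightarrow$ (ii), I would argue as follows. Fix $\e > 0$ and a psh weight $\psi$ on $L$; without loss of generality normalize so that $\sup_K(\psi - \phi) = 0$, i.e. $\psi \le \phi$ on $K$. Then by definition $\psi \le \phi_K$ on $X$, and since $(K,\phi)$ is regular, $\phi_K = \phi_K^*$ is a \emph{psh weight with minimal singularities} satisfying $\phi_K^* \le \phi$ on $K$. Replacing $\psi$ by $\phi_K^*$ only \emph{increases} the left-hand side $\sup_K e^{p(\psi-\phi)}$ (it is already $\le 1$) while also increasing $e^{p(\psi-\phi)}$ pointwise, so it suffices to prove \eqref{equ:BMpsh} with $\psi = \phi_K^*$, where now both sides depend only on $(K,\phi)$ and not on the original $\psi$. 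This reduces the psh-weight Bernstein-Markov inequality to a \emph{single} inequality, with a fixed function $u := \phi_K^* - \phi \le 0$ on $K$ that is continuous (by regularity, $\phi_K^*$ is usc and $\ge \phi_K$; one checks $u$ is in fact usc and bounded on $K$, which is what the estimate needs). The claim becomes: for each $\e > 0$ there is $C$ with $\sup_K e^{pu} \le C e^{\e p} \int_K e^{pu}\,d\mu$ for all $p \ge 1$.

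The hard part is this last step — upgrading the \emph{sections-based} Bernstein-Markov hypothesis to an inequality about the single plurisubharmonic function $u$. The bridge is Proposition \ref{prop:equisec}: $\phi_K^* = \mathrm{sup}^*\{\frac1k \log|s| : s \in H^0(kL),\ \sup_K|s|_{k\phi} \le 1\}$, so $u$ is approximated from below (in the usc-envelope sense) by normalized logarithms of sections $\frac1k\log|s_k| - \phi$ with $\sup_K|s_k|_{k\phi} \le 1$. The strategy is: given $p$, pick $k$ large and a section $s$ with $\frac1k\log|s| - \phi$ close to $u$ near the point(s) where $\sup_K e^{pu}$ is nearly attained; then $\sup_K e^{pu} \approx \sup_K(|s|_{k\phi})^{p/k} \le (\sup_K|s|_{k\phi})^{p/k} \cdot (\text{correction})$, and apply \eqref{equ:BMsections} to $s$ (with $p/k$ powers, i.e. using $|s|^{2m}_{k\phi}$ for suitable $m$, or applying the inequality to a tensor power $s^{\otimes m} \in H^0(mkL)$). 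The Bernstein-Markov constant must be controlled uniformly: one chooses $k = k(\e)$ first so that the approximation error from Proposition \ref{prop:equisec} is $\le \e$, then lets $p$ range, using \eqref{equ:BMsections} at level $k$ with its own $\e'$; the product of the two exponential factors stays $\le e^{\e'' p}$. The delicate points are (a) the usc-regularization in Proposition \ref{prop:equisec} means the approximation $\frac1k\log|s_k| \to \phi_K^*$ holds only after regularizing, so one must pass to the envelope / use a Hartogs-type or Dini-type argument to get a \emph{uniform} lower bound $\frac1k\log|s_k| - \phi \ge u - \e$ on $K$ (here regularity — i.e. $u$ usc and the envelope being attained — and compactness of $K$ are essential), and (b) keeping the integral on the right-hand side from being destroyed, for which one integrates $|s|^{2m}e^{-2mk\phi}$ against $\mu$ and compares with $\int e^{pu}\,d\mu$ using $u \ge \frac1k\log|s| - \phi$ again, in the correct direction. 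Modulo these analytic details — which follow the pattern of Siciak's argument in \cite{Sic88} adapted to the big line bundle setting via \cite{BB08a} — the cycle closes.
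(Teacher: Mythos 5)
Your plan does not close the equivalence. You prove (ii)$\Rightarrow$(iii) and (ii)$\Rightarrow$(i) correctly (these match the easy steps in the paper, both resting on Proposition \ref{prop:deter}), and you then attack (i)$\Rightarrow$(ii); but even if that step worked you would only obtain (i)$\Leftrightarrow$(ii)$\Rightarrow$(iii), with no implication out of (iii) at all. The implication ``determining $\Rightarrow$ Bernstein--Markov'' is precisely the main content of the theorem (the generalization of \cite{Sic88,NZ83}) and is nowhere addressed in your proposal. In the paper this is the step (iii')$\Rightarrow$(ii'): setting $F_p(\psi)=\log\Vert e^{\psi-\phi}\Vert_{L^p(\mu)}$ and $F(\psi)=\sup_K(\psi-\phi)$, condition (iii) says $F_p\to F$ pointwise on $\cP(X,L)$ and (ii) says the convergence is uniform with $F-F_p$ uniformly bounded; uniformity is obtained from Dini's lemma on the compact quotient $\cT(X,L)=\cP(X,L)/\R$, using that each $F_p$ is continuous (Lemma \ref{lem:cont}), that $F$ is usc by Hartogs, and that $F-F_p$ is translation-invariant, non-negative and non-increasing in $p$. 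Some such compactness argument over the space of psh weights (or Siciak's original argument) is unavoidable, because the whole point of \eqref{equ:BMpsh} is its uniformity in $\psi$.

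That uniformity is also where your attempted (i)$\Rightarrow$(ii) breaks down. After normalizing $\sup_K(\psi-\phi)=0$, both $\psi$ and $\phi_K^*$ give left-hand side $1$ in \eqref{equ:BMpsh}, but since $\psi\le\phi_K^*$ pointwise one has $\int e^{p(\psi-\phi)}d\mu\le\int e^{p(\phi_K^*-\phi)}d\mu$: replacing $\psi$ by $\phi_K^*$ \emph{enlarges} the right-hand side, so the inequality for $\phi_K^*$ is the weakest instance and does not imply the inequality for $\psi$. Concretely, a psh weight that touches $\phi$ at a single point of $K$ but is very negative on $\supp\mu$ has a tiny right-hand integral while the left-hand side is $1$; no estimate on the single function $u=\phi_K^*-\phi$ can see this, so the claimed reduction ``it suffices to prove \eqref{equ:BMpsh} with $\psi=\phi_K^*$'' is false and the subsequent sketch via Proposition \ref{prop:equisec} and tensor powers, even if completed, would prove something strictly weaker than (ii). (A secondary point: regularity only gives that $\phi_K=\phi_K^*$ is usc with $\phi_K^*\le\phi$ on $K$; continuity of $u$ on $K$ is not available in the big case, as the paper itself notes.)
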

This theorem gives in particular a conceptually simpler proof of the
main results of \cite{Sic88,NZ83}. 
\begin{proof}
For $p>0$ we introduce the functionals \[
F_{p}(\psi):=\frac{1}{p}\log\int_{X}e^{p(\psi-\phi)}d\mu=\log\Vert e^{\psi-\phi}\Vert_{L^{p}(\mu)}\]
 and \[
F(\psi):=\sup_{K}(\psi-\phi)\]
 defined on the set $\cP(X,L)$ of all psh weights $\psi$ on $L$.
For each $\psi$ $pF_{p}(\psi)$ is a convex function of $p$ by convexity
of the exponential (H{\"o}lder's inequality), and we have $pF_{p}(\psi)\rightarrow0$
as $p\rightarrow0_{+}$ by dominated convergence since $p(\psi-\phi)\rightarrow0$
$\mu$-a.e. ($\mu$ puts no mass on the polar set $\{\psi=-\infty\}$).
As a consequence $F_{p}(\psi)$ is a non-decreasing function of $p$,
and it converges towards \[
\log\Vert e^{\psi-\phi}\Vert_{L^{\infty}(\mu)}\]
 as $p\rightarrow+\infty$ by a basic fact from integration theory.
We can therefore reformulate (ii) and (iii) as follows: 
\begin{enumerate}
\item[(ii')] $F_p-F$ is bounded on $\cP(X,L)$, uniformly for $p\ge 1$ and $F_p \to F$ uniformly on $\cP(X,L)$ as $p \to \infty$.
\item[(iii')] $F_p \to F$ pointwise on $\cP(X,L)$.
\end{enumerate}
(ii') is just a reformulation of (ii), and that (iii') is equivalent to (iii) follows from (ii) of Proposition \ref{prop:deter}. This clearly shows that (ii)$\Rightarrow$(iii) by Proposition \ref{prop:deter} and
in particular that $(K,\phi)$ is necessarily regular when (ii) holds,
by Proposition \ref{prop:deter}. We thus see that (ii)$\Rightarrow$(i),
and (i)$\Rightarrow$(iii) is obtained in a similar fashion using
again Proposition \ref{prop:deter}. All that remains to show is
thus (iii')$\Rightarrow$(ii').

By Hartogs' lemma $F$ is \emph{upper semicontinuous} on $\cP(X,L)$.
On the other hand Lemma \ref{lem:cont} below says that $F_{p}$
is \emph{continuous} on $\cP(X,L)$ for each $p>0$, so that $F-F_{p}$
is usc on $\cP(X,L)$. Now the main point is that $F-F_{p}$ is invariant
by translation (by a constant), thus descends to a usc function on
\[
\cP(X,L)/\R\simeq\cT(X,L),\]
 the space of all closed positive $(1,1)$-currents lying in the cohomology
class $c_{1}(L)$, which is \emph{compact} (in the weak topology of
currents). 

By monotonicity we have $0\leq F-F_{p}\leq F-F_{1}$ when $p\geq1$.
But $F-F_{1}$ is usc on a compact set hence is bounded from above,
and it follows that $F-F_{p}$ is always uniformly bounded on $\cP(X,L)$
for $p\geq1$. By the above discussion it thus follows that (iii)$\Rightarrow$(ii)
amounts to the fact that $F_{p}$ converges to $F$ uniformly as soon
as pointwise convergence holds, which is a consequence of Dini's lemma
since $F-F_{p}$ is usc and non-increasing on $\cT(X,L)$ as a function of $p$. 
\end{proof}

\begin{lem}
\label{lem:cont} The functional $F_{p}:\cP(X,L)\rightarrow\R$ is
continuous for each $p>0$. 
\end{lem}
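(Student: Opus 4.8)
The plan is to establish continuity of $F_p$ on $\cP(X,L)$ by proving continuity separately from above and from below, where $\cP(X,L)$ is equipped with the $L^1$-topology (equivalently the weak topology, since psh weights differing by a fixed background weight form a bounded family of quasi-psh functions). Fix $p>0$ and let $\psi_j\to\psi$ in $\cP(X,L)$. First I would observe that, after subtracting a fixed smooth reference weight $\phi$, the functions $u_j:=\psi_j-\phi$ and $u:=\psi-\phi$ are $dd^c\phi$-psh functions on $X$, hence form a relatively compact family; in particular they are uniformly bounded above by some constant $A$, so $e^{p(\psi_j-\phi)}\le e^{pA}$ is a uniformly bounded sequence of nonnegative functions.

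For \emph{upper} semicontinuity of $F_p$: by Hartogs' lemma, $\limsup_j u_j \le u$ pointwise outside a pluripolar (hence Lebesgue-null, hence $\mu$-null if $\mu$ were involved — but here we integrate against a \emph{fixed} reference Lebesgue-type measure, so Lebesgue-null suffices) set; combined with the uniform upper bound, Fatou's lemma applied to $e^{pA}-e^{p(\psi_j-\phi)}$ gives
\[
\limsup_{j\to\infty}\int_X e^{p(\psi_j-\phi)}\,d\mu \le \int_X e^{p(\psi-\phi)}\,d\mu,
\]
whence $\limsup_j F_p(\psi_j)\le F_p(\psi)$. Wait — I must be careful: the integral in $F_p$ is against $\mu$, which may charge pluripolar sets in general, but here we only claim continuity on $\cP(X,L)$ for the \emph{given} $\mu$ of the lemma, which in the intended applications is non-pluripolar; in that case Hartogs' exceptional set is $\mu$-null and the argument stands. (If one wants the statement for completely arbitrary $\mu$ the functional need not be continuous, so the ambient hypothesis that $\mu$ is non-pluripolar is what makes the lemma true.)

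For \emph{lower} semicontinuity, which I expect to be the main obstacle, the subtlety is that $u_j$ need not converge to $u$ pointwise $\mu$-a.e.; one only has $L^1_{\mathrm{loc}}$ convergence and $\limsup u_j \le u$. The standard remedy is to use convexity/subharmonicity in the form of a sub-mean-value inequality: for a fixed small radius and an ambient smooth volume form $\lambda$, the function $x\mapsto \int e^{p\,u(y)}\,d\lambda(y)$ over a small ball around $x$ dominates a multiple of $e^{p\,u(x)}$ (since $e^{pu}$ is plurisubharmonic when $u$ is $dd^c\phi$-psh and $\phi$ is chosen with large enough positivity, or after the usual convexity trick); then $L^1$-convergence $u_j\to u$ upgrades, via these local averages, to the convergence $\int e^{pu_j}\,d\mu \to \int e^{pu}\,d\mu$ along the fixed measure. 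More directly: since $(u_j)$ is relatively compact in $L^1$, any subsequence has a further subsequence converging pointwise a.e.; on that sub-subsequence $e^{pu_j}\to e^{pu}$ a.e. and is dominated, so by dominated convergence the integrals converge; as the limit $\int e^{pu}\,d\mu$ is independent of the chosen subsequence, the full sequence converges, giving continuity of $F_p=\tfrac1p\log(\cdot)$. The one genuine point requiring the hypothesis on $\mu$ is that "a.e." (Lebesgue) implies "$\mu$-a.e.", i.e.\ $\mu$ puts no mass on the Lebesgue-null set where the subsequence fails to converge — true precisely because $\mu$ is non-pluripolar, hence in particular absolutely continuous with respect to capacity and so charges no Lebesgue-null Borel set? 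That last implication is false in general; instead one should invoke that the exceptional set can be taken pluripolar (Hartogs plus the standard fact that $L^1$-limits of psh functions agree with the usc regularization of the $\limsup$ off a pluripolar set), and $\mu$ non-pluripolar then finishes it.
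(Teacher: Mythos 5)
Your upper-semicontinuity half is fine: $\limsup_j(\psi_j-\phi)\le\psi-\phi$ holds (everywhere, by the sub-mean-value property plus $L^1_{\mathrm{loc}}$ convergence), and reverse Fatou with the uniform upper bound gives $\limsup_jF_p(\psi_j)\le F_p(\psi)$. The genuine gap is exactly where you suspected, in the lower-semicontinuity half, and neither of your two remedies closes it. The sub-mean-value idea bounds $e^{p(\psi_j-\phi)}(x)$ from \emph{above} by local Lebesgue averages, which is the wrong direction for a lower bound on $\int e^{p(\psi_j-\phi)}d\mu$ when $\mu$ is singular with respect to Lebesgue measure --- and in this paper it typically is: $K$ can be Lebesgue-null (a torus, a totally real submanifold) while non-pluripolar. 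Your ``more directly'' argument extracts a sub-subsequence converging Lebesgue-a.e., which says nothing $\mu$-a.e.; and the patch you propose, that the exceptional set of non-convergence can be taken pluripolar, is not available: the Hartogs-type facts identify $(\limsup_j u_j)^*$ with $u$ and give $\limsup_j u_j=u$ off a pluripolar set, but they give no control whatsoever on $\liminf_j u_j$. Pointwise convergence off a pluripolar set can genuinely fail: in $\C$ take $u_j=\max\left(\log|z-a_j|/\log(1/r_j),\,-1\right)$ with $r_j\to0$ and the points $a_j$ enumerated (typewriter fashion) so that the discs $D(a_j,r_j)$ sweep the unit circle infinitely often; then $u_j\to0$ in $L^1_{\mathrm{loc}}$, the $u_j$ are subharmonic and uniformly bounded, yet $\liminf_j u_j=-1$ on the whole (non-polar) circle. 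So no subsequence argument based on ``convergence off a pluripolar set'' can give $\mu$-a.e.\ convergence of the $u_j$ themselves, and your lower bound is not established. (Note the relative compactness you invoke is in $L^1$ of Lebesgue measure, not of $\mu$.)

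This liminf problem is precisely what the paper's proof is built to avoid. There one works with the bounded sequence $u_k=e^{p(\psi_k-\phi)}$ in $L^2(\mu)$ and uses weak compactness together with Mazur's lemma to produce finite convex combinations $v_k=\sum_{j\in I_k}t_j^{(k)}u_j$, with indices $j\ge k$, converging \emph{strongly} in $L^2(\mu)$, hence $\mu$-a.e.\ along a subsequence. On the potential-theoretic side, $\tau_k=\log\sum_{j\in I_k}t_j^{(k)}e^{p\psi_j}$ is again a psh weight (for $pL$) converging to $p\psi$, and now only the \emph{limsup} of the $\tau_k$ is needed: Hartogs plus Bedford--Taylor (negligible sets are pluripolar) plus non-pluripolarity of $\mu$ identify the $\mu$-a.e.\ limit of the $v_k$ with $e^{p(\psi-\phi)}$, which gives convergence of $\int e^{p(\psi_k-\phi)}d\mu$. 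To repair your proof you need this convexification step (or some substitute such as convergence in capacity, which $L^1_{\mathrm{loc}}$ convergence alone does not provide); as written, the lower-semicontinuity direction remains unproved.
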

The proof relies on more or less standard arguments. 

\begin{proof}
Let $\psi_{k}\rightarrow\psi$ be a (weakly) convergent sequence in
$\cP(X,L)$. Then $\sup_{X}(\psi_{k}-\phi)$ is uniformly bounded,
thus $u_{k}:=e^{p(\psi_{k}-\phi)}$ is a uniformly bounded sequence.
We may thus assume upon extracting a subsequence that $\int_{X}u_{k}d\mu\rightarrow l$
for some $l\in\R$ and we have to show that $l=\int_{X}ud\mu$ with $u:=e^{p(\psi-\phi)}$.
Since the functions $u_{k}$ stay in a weakly compact subset of the
Hilbert space $L^{2}(\mu)$, the closed convex subsets \[
C_{k}:=\overline{\text{Conv}\{u_{j},j\geq k\}}\subset L^{2}(\mu)\]
 are weakly compact in $L^{2}(\mu)$, and it follows that there exists
$v$ lying in the intersection of the decreasing sequence of compact
sets $C_{k}$. For each $k$ we may thus find a finite convex combination
\[
v_{k}=\sum_{j\in I_{k}}t_{j}^{(k)}u_{j}\]
 with $I_{k}\subset[k,+\infty[$ such that $v_{k}\rightarrow v$ strongly
in $L^{2}(\mu)$. Note that \[
\lim_{k\rightarrow\infty}\int_{X}v_{k}d\mu=l\]
 since $\int_{X}u_{k}d\mu\rightarrow l$, thus we get $\int_{X}vd\mu=l$.

On the other hand the convergence $\psi_{k}\rightarrow\psi$ in $\cP(X,L)$
implies that \[
\tau_{k}:=\log\left(\sum_{j\in I_{k}}t_{j}^{(k)}e^{p\psi_{j}}\right)\in\cP(X,pL)\]
 (the latter space is to be understood in the sense of quasi-psh functions
when $p$ is not an integer) converges to $p\psi$, and it follows
from Hartogs' lemma that \[
p\psi=(\limsup_{k\rightarrow\infty}\tau_{k})^{*}\]
 pointwise on $X$. But the set \[
\{(\limsup_{k\rightarrow\infty}\tau_{k})^{*}>\limsup_{k\rightarrow\infty}\tau_{k}\}\]
 is negligible, hence pluripolar by Bedford-Taylor, and we get \[
u:=e^{p(\psi-\phi)}=\limsup_{k\rightarrow\infty}v_{k}\,\mu-\text{a.e.}\]
 since $v_{k}=e^{\tau_{k}-p\phi}$. But since $v_{k}\rightarrow v$
in $L^{2}(\mu)$ there exists a subsequence such that $v_{k}\rightarrow v$
$\mu$-a.e., and we infer $u=v$ $\mu$-a.e., which finally shows
that \[
l=\int_{X}vd\mu=\int_{X}ud\mu\]
 as desired. \end{proof}
\begin{cor}
If $(K,\phi)$ is a regular weighted compact subset, then $\psi\mapsto\sup_{K}(\psi-\phi)$
is continuous on $\cP(X,L)$. 
\end{cor}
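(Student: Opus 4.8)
The plan is to deduce continuity of $F(\psi):=\sup_{K}(\psi-\phi)$ on $\cP(X,L)$ directly from the machinery just developed, by exhibiting $F$ as a \emph{uniform} limit of the continuous functionals $F_{p}$. For this I first need an auxiliary measure, and the equilibrium measure is the natural candidate: let $\mu:=\eq(K,\phi)$, which is a non-pluripolar probability measure concentrated on $K$. Since $(K,\phi)$ is assumed regular, Proposition \ref{prop:determining} says that $\mu$ is \emph{determining} for $(K,\phi)$, i.e.\ condition (iii) of Theorem \ref{thm:BM} holds for this choice of $\mu$.

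Next I would invoke Theorem \ref{thm:BM}: since (iii) holds, so does (ii), and reading off the reformulation (ii') used in the proof of that theorem, the functionals
\[
F_{p}(\psi)=\frac{1}{p}\log\int_{X}e^{p(\psi-\phi)}d\mu
\]
converge to $F(\psi)=\sup_{K}(\psi-\phi)$ uniformly on $\cP(X,L)$ as $p\to\infty$. By Lemma \ref{lem:cont}, each $F_{p}$ is continuous on $\cP(X,L)$; hence $F$, being a uniform limit of continuous real-valued functions, is continuous, which is the assertion.

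I do not expect a genuine obstacle here beyond correctly assembling the earlier results; the only step that needs a moment's thought is the choice of the auxiliary measure, which is dictated precisely by Proposition \ref{prop:determining} (regularity $\Leftrightarrow$ the equilibrium measure is determining). One could instead try to argue more by hand — regularity gives $\phi_{K}=\phi_{K}^{*}$ and hence $F(\psi)=\sup_{X}(\psi-\phi_{K}^{*})$ — but this still leaves a semicontinuity problem for the right-hand side, so routing through Theorem \ref{thm:BM} and Lemma \ref{lem:cont} is the cleaner path.
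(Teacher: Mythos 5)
Your proof is correct and is essentially identical to the paper's: both take $\mu:=\eq(K,\phi)$, use Proposition \ref{prop:determining} together with Theorem \ref{thm:BM} to get uniform convergence of $F_p$ to $\sup_K(\psi-\phi)$ on $\cP(X,L)$, and conclude continuity from Lemma \ref{lem:cont}. No gaps to report.
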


\begin{proof}
By Proposition \ref{prop:determining} and Theorem \ref{thm:BM} the equilibrium measure $\mu:=\mu_{eq}(K,\phi)$ is Bernstein-Markov for $(K,\phi)$ when $(K,\phi)$ is regular. By Lemma \ref{lem:cont} the functionals $\log ||e^{\psi-\phi}||_{L^p(\mu)}$ are continuous and from Theorem \ref{thm:BM} we get that they converge uniformly to $\sup_K(\psi-\phi),$ and the continuity thus follows.
\end{proof}
In the case when $X=\mathbb{P}^1$ and $L=\mathcal{O}(1)$ the result in the previous corollary was obtained by different methods in \cite{ZZ09} (Lemma 26). The fact that the equilibrium
measure of a regular weighted set $(K,\phi)$ is Bernstein-Markov generalizes \cite{NZ83}.

\section{Volumes of balls}

\subsection{Convexity properties}

Let $(K,\phi)$ be a weighted compact subset and let $\mu$ be a probability
measure on $K$. The $L^{2}$-seminorm \[
\Vert s\Vert_{L^{2}(\mu,\phi)}^{2}:=\int_{X}|s|^{2}e^{-2\phi}d\mu\]
can then be viewed as a Hermitian metric $L^{2}(\mu,\phi)$
on the complex vector space $H^{0}(L)$. If we are given a basis $S=(s_{1},...,s_{N})$
of $H^{0}(L)$, a Hermitian metric $H$ on $H^{0}(L)$ can be identified
with its Gram matrix \[
\left(\langle s_{i},s_{j}\rangle_{H}\right)_{i,j}\in\Herm^{+}(N)\]
 with $N=h^{0}(L)$ as before, and its determinant satisfies \[
\det H=\frac{\vol\Pi_{S}}{\vol\Pi_{S'}},\]
 where $S'$ is an $H$-orthonormal basis and $\Pi_{S}$ is the unit
box in the corresponding real vector space, generated by the elements of $S$ (and similarly for $\Pi_{S'}$).
Since $\pi^{N}/N!$ is equal to the volume of the unit ball in $\C^{N}$,
we infer \begin{equation}
\log\frac{\vol\cB^{2}(\mu,\phi)}{\vol\Pi_{S}}=-\log\det L^{2}(\mu,\phi)+\log\frac{\pi^{N}}{N!}\label{equ:L-func}\end{equation}
 where $\det$ is defined wrt $S$.

Now let $\det S$ be the image of $s_{1}\wedge...\wedge s_{N}$ under
the natural map \[
\bigwedge^{N}H^{0}(X,L)\longrightarrow H^{0}(X^{N},L^{\boxtimes N}),\]
 that is the global section on $X^{N}$ locally defined by \[
(\det S)(x_{1},...,x_{N}):=\det(s_{i}(x_{j})).\]

Expanding out the determinant as in Lemma 5.27 of \cite{Dei99}, one
easily shows:
\begin{lem}
\label{lem:det} The $L^{2}$-norm of $\det S$ with respect to the
weight and measure induced by $\phi$ and $\mu$ satisfies \[
\Vert\det S\Vert_{L^{2}(\mu,\phi)}^{2}=N!\det L^{2}(\mu,\phi).\]

\end{lem}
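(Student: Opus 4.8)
The plan is to expand the determinant $\det S$ along the definition $(\det S)(x_1,\dots,x_N) = \det(s_i(x_j)) = \sum_{\sigma\in\mathfrak{S}_N} \operatorname{sgn}(\sigma)\prod_{j=1}^N s_{\sigma(j)}(x_j)$ and then compute the squared $L^2$-norm on $X^N$ with respect to the product weight and product measure $\mu^{\otimes N}$. First I would write out
\[
\Vert\det S\Vert_{L^2(\mu,\phi)}^2 = \int_{X^N} |\det(s_i(x_j))|^2 \, e^{-2(\phi(x_1)+\dots+\phi(x_N))}\, d\mu(x_1)\cdots d\mu(x_N),
\]
substitute the permutation expansion for both $\det(s_i(x_j))$ and its conjugate, and interchange the sum and the integral. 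This produces a double sum over $\sigma,\tau\in\mathfrak{S}_N$ of $\operatorname{sgn}(\sigma)\operatorname{sgn}(\tau)$ times an integral that, by Fubini and the product structure of the measure and the weight, factors as $\prod_{j=1}^N \int_X s_{\sigma(j)}\,\overline{s_{\tau(j)}}\, e^{-2\phi}\, d\mu = \prod_{j=1}^N \langle s_{\sigma(j)}, s_{\tau(j)}\rangle_{L^2(\mu,\phi)}$.

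Next I would recognize this as a Gram-type expansion. Reindexing $j\mapsto \sigma^{-1}(j)$ and setting $\rho := \tau\sigma^{-1}$, each term becomes $\operatorname{sgn}(\rho)\prod_{i=1}^N \langle s_i, s_{\rho(i)}\rangle_{H}$ where $H = L^2(\mu,\phi)$ denotes the Gram matrix $(\langle s_i,s_j\rangle_H)_{i,j}$; since for fixed $\rho$ there are exactly $N!$ choices of $\sigma$ giving the same contribution, the double sum collapses to $N!\sum_{\rho\in\mathfrak{S}_N}\operatorname{sgn}(\rho)\prod_{i=1}^N H_{i,\rho(i)} = N!\det H = N!\det L^2(\mu,\phi)$, which is exactly the claimed identity. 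This is essentially the standard computation underlying the Andréief / Heine identity, as referenced via Lemma 5.27 of \cite{Dei99}.

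I do not anticipate a serious obstacle here; the only points requiring a modicum of care are the justification of interchanging the finite sum over permutations with the integral (trivial, since the sum is finite) and the application of Fubini's theorem to factor the integral over $X^N$ into a product of one-variable integrals. The latter is legitimate because $\mu^{\otimes N}$ is a finite product measure and the integrand is a finite sum of products of the form $\prod_j f_j(x_j)$ with each $f_j = s_{\sigma(j)}\overline{s_{\tau(j)}}e^{-2\phi}$ bounded on $K$ (the sections are holomorphic, hence continuous, and $\phi$ is continuous on the compact set $K$ supporting $\mu$), so each factor is $\mu$-integrable. The combinatorial bookkeeping of the relabeling $\rho = \tau\sigma^{-1}$ and the multiplicity $N!$ is the genuine content, but it is routine.
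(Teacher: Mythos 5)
Your proof is correct and follows essentially the same route as the paper, which simply invokes the standard permutation expansion of $\det(s_i(x_j))$ (as in Lemma 5.27 of \cite{Dei99}) without writing out the details; your Fubini factorization into Gram entries and the relabeling $\rho=\tau\sigma^{-1}$ with multiplicity $N!$ is exactly that computation.
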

On the other hand, a straightforward computation yields
\begin{lem}
If $P\in X^{N}$ is a configuration of points, then \[
\Vert\det S\Vert_{L^{2}(\delta_{P},\phi)}^{2}=\frac{N!}{N^{N}}\left|\det S\right|_{\phi}^{2}(P).\]

\end{lem}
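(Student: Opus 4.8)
The plan is a direct computation that expands the product measure and exploits the alternating property of the determinant; no pluripotential theory enters. Write $P=(x_1,\dots,x_N)$, so $\delta_P=\frac1N\sum_{k=1}^N\delta_{x_k}$, and recall that, as in Lemma~\ref{lem:det}, the quantity $\Vert\det S\Vert_{L^2(\delta_P,\phi)}^2$ denotes the integral of $|\det S|_\phi^2$ over $X^N$ against the product measure $\delta_P^{\otimes N}$, where $L^{\boxtimes N}$ carries the weight induced by $\phi$, i.e. $e^{-2(\phi(y_1)+\dots+\phi(y_N))}$. First I would expand the product measure as
\[
\delta_P^{\otimes N}=\frac{1}{N^N}\sum_{(k_1,\dots,k_N)\in\{1,\dots,N\}^N}\delta_{(x_{k_1},\dots,x_{k_N})},
\]
whence
\[
\Vert\det S\Vert_{L^2(\delta_P,\phi)}^2=\frac{1}{N^N}\sum_{(k_1,\dots,k_N)}\left|\det\bigl(s_i(x_{k_j})\bigr)\right|^2 e^{-2\left(\phi(x_{k_1})+\dots+\phi(x_{k_N})\right)}.
\]

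The key observation is that the $j$-th column of the matrix $\bigl(s_i(x_{k_j})\bigr)_{i,j}$ depends only on the point $x_{k_j}$; hence if two of the indices $k_1,\dots,k_N$ coincide, two columns agree and the determinant vanishes. Only the $N!$ terms for which $(k_1,\dots,k_N)$ is a permutation $\sigma$ of $(1,\dots,N)$ survive, and for each such term one has $\det\bigl(s_i(x_{\sigma(j)})\bigr)=\operatorname{sgn}(\sigma)\det\bigl(s_i(x_j)\bigr)$, so that $\left|\det\bigl(s_i(x_{\sigma(j)})\bigr)\right|^2=\left|\det\bigl(s_i(x_j)\bigr)\right|^2$, while $\sum_j\phi(x_{\sigma(j)})=\sum_j\phi(x_j)$ is likewise permutation-invariant. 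Summing over all permutations therefore gives
\[
\Vert\det S\Vert_{L^2(\delta_P,\phi)}^2=\frac{N!}{N^N}\left|\det\bigl(s_i(x_j)\bigr)\right|^2 e^{-2\left(\phi(x_1)+\dots+\phi(x_N)\right)}=\frac{N!}{N^N}\,|\det S|_\phi^2(P),
\]
the last equality being just the definition of $|\det S|_\phi$ as the Vandermonde-type quantity $\left|\det(s_i(x_j))\right|e^{-(\phi(x_1)+\dots+\phi(x_N))}$ already used in connection with Fekete configurations.

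There is no real obstacle here: the argument is purely linear-algebraic bookkeeping, and the only points requiring care are the correct interpretation of the norm through the product structure on $X^N$ (the weight being the sum of the pullbacks of $\phi$ from the factors) and the vanishing of the determinant on the "diagonal" index tuples, which is exactly what produces the combinatorial factor $N!/N^N$ rather than $N!$.
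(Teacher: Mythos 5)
Your computation is correct and is precisely the ``straightforward computation'' the paper invokes without detail: expanding $\delta_P^{\otimes N}$, killing the index tuples with repeated points via the alternating property, and using the permutation invariance of $|\det(s_i(x_j))|^2$ and of the weight to collect the $N!$ surviving terms against the factor $N^{-N}$. Nothing is missing; the care you take with the product weight $e^{-2\sum_j\phi(y_j)}$ and the vanishing of $\det S$ on the partial diagonals is exactly what justifies the factor $N!/N^N$.
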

Combining these results, we record
\begin{prop}
\label{prop:formules} We have \begin{equation}
\log\frac{\vol\cB^{2}(\mu,\phi)}{\vol\Pi_{S}}=-\log\Vert\det S\Vert_{L^{2}(\mu,\phi)}^{2}+N\log\pi.\label{equ:formule1}\end{equation}
 If $\mu=\delta_{P}$, then \begin{equation}
\log\frac{\vol\cB^{2}(\delta_{P},\phi)}{\vol\Pi_{S}}=-\log|\det S|_{\phi}^{2}(P)+\log\frac{\pi^{N}}{N!}+N\log N.\label{equ:formule2}\end{equation}

\end{prop}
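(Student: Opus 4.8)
The plan is to combine the three identities established immediately above: formula (\ref{equ:L-func}), which relates $\vol\cB^{2}(\mu,\phi)$ to the Gram determinant $\det L^{2}(\mu,\phi)$; Lemma \ref{lem:det}, which expresses that determinant through $\Vert\det S\Vert_{L^{2}(\mu,\phi)}^{2}$; and the subsequent lemma computing $\Vert\det S\Vert_{L^{2}(\delta_{P},\phi)}^{2}$ for a point configuration $P$. No new analytic input is needed.

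For (\ref{equ:formule1}) I would start from (\ref{equ:L-func}),
\[
\log\frac{\vol\cB^{2}(\mu,\phi)}{\vol\Pi_{S}}=-\log\det L^{2}(\mu,\phi)+\log\frac{\pi^{N}}{N!},
\]
and substitute $\det L^{2}(\mu,\phi)=\frac{1}{N!}\Vert\det S\Vert_{L^{2}(\mu,\phi)}^{2}$ from Lemma \ref{lem:det}. Taking logarithms produces a $\log N!$ that cancels the $-\log N!$ inside $\log(\pi^{N}/N!)$, leaving precisely $-\log\Vert\det S\Vert_{L^{2}(\mu,\phi)}^{2}+N\log\pi$, which is (\ref{equ:formule1}).

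For (\ref{equ:formule2}) I would specialize (\ref{equ:formule1}) to $\mu=\delta_{P}$ and then insert $\Vert\det S\Vert_{L^{2}(\delta_{P},\phi)}^{2}=\frac{N!}{N^{N}}|\det S|_{\phi}^{2}(P)$ from the last lemma. Passing to logarithms contributes an extra $-\log N!+N\log N$; regrouping the constants via $N\log\pi-\log N!=\log(\pi^{N}/N!)$ yields $-\log|\det S|_{\phi}^{2}(P)+\log(\pi^{N}/N!)+N\log N$, as claimed.

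There is essentially no obstacle here: the argument is pure bookkeeping of the factorial and power-of-$N$ normalization constants, together with the observation that the same fixed basis $S$ — hence the same box $\Pi_{S}$ — appears on both sides of every identity, so its volume drops out consistently. The genuine content of the proposition sits in the two preceding lemmas, whose proofs reduce to the Cauchy--Binet expansion of $\det S$ (cf.\ Lemma 5.27 of \cite{Dei99}) and a one-line evaluation of the atomic integral against $\delta_{P}$.
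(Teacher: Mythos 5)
Your argument is correct and coincides with the paper's own (implicit) proof: the proposition is stated there as an immediate combination of formula (\ref{equ:L-func}), Lemma \ref{lem:det}, and the lemma evaluating $\Vert\det S\Vert_{L^{2}(\delta_{P},\phi)}^{2}$, which is exactly the bookkeeping you carry out, and your handling of the $N!$ and $N^{N}$ constants checks out.
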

\noindent Note that the last formula reads \begin{equation}
\log\frac{\vol\cB^{2}(\delta_{P},\phi)}{\vol\cB^{2}(\nu,\psi)}=-\log|\det S|_{\phi}^{2}(P)+N\log N\label{equ:formule3}\end{equation}
 when $S$ is an orthonormal basis for $L^{2}(\nu,\psi)$.

The volume of balls satisfies the following convexity properties.
\begin{prop}
\label{prop:concave} Let $(K,\phi)$ be a weighted compact subset
and $\mu$ be a probability measure on $K$. The functional $\log\vol\cB^{2}(\mu,\phi)$
is convex in its $\mu$-variable and concave in its $\phi$-variable. \end{prop}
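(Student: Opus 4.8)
The plan is to reduce both assertions to the explicit formulas of Proposition \ref{prop:formules} and then invoke two elementary convexity facts: convexity of $-\log\det$ on the cone of positive Hermitian matrices for the $\mu$-variable, and H{\"o}lder's inequality for the $\phi$-variable.

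Fix a basis $S=(s_1,\dots,s_N)$ of $H^0(L)$, with $N=\dim H^0(L)$. By (\ref{equ:L-func}) we have, up to an additive constant depending only on $N$ and $S$ (hence on neither $\mu$ nor $\phi$),
\[
\log\vol\cB^{2}(\mu,\phi)=-\log\det G(\mu,\phi),\qquad G(\mu,\phi)_{ij}:=\int_X s_i\overline{s_j}\,e^{-2\phi}\,d\mu\,\in\,\Herm^+(N),
\]
and by (\ref{equ:formule1}), up to another such constant,
\[
\log\vol\cB^{2}(\mu,\phi)=-\log\Vert\det S\Vert_{L^2(\mu,\phi)}^{2}=-\log\int_{X^N}|\det S|^{2}\,e^{-2(\phi(x_1)+\dots+\phi(x_N))}\,d\mu(x_1)\cdots d\mu(x_N).
\]
When $G(\mu,\phi)$ fails to be invertible --- equivalently, when $\det S$ vanishes $\mu^{\otimes N}$-a.e., a condition on $\mu$ alone --- one has $\vol\cB^{2}(\mu,\phi)=+\infty$ and the inequalities below are vacuous; so we may assume all Gram matrices below are positive definite.

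For the $\mu$-variable, the map $\mu\mapsto G(\mu,\phi)$ is linear, so $G(t\mu_0+(1-t)\mu_1,\phi)=tG(\mu_0,\phi)+(1-t)G(\mu_1,\phi)$ for $t\in[0,1]$. Since $A\mapsto-\log\det A$ is convex on $\Herm^+(N)$ --- its second derivative along a segment $A+sB$ equals $\tr\!\big((A^{-1/2}BA^{-1/2})^{2}\big)\ge0$ --- the first displayed identity yields
\[
\log\vol\cB^{2}(t\mu_0+(1-t)\mu_1,\phi)\le t\log\vol\cB^{2}(\mu_0,\phi)+(1-t)\log\vol\cB^{2}(\mu_1,\phi),
\]
which is convexity in $\mu$.

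For the $\phi$-variable, put $\phi_t:=(1-t)\phi_0+t\phi_1$ and $\Phi_i(x_1,\dots,x_N):=\phi_i(x_1)+\dots+\phi_i(x_N)$ for $i=0,1$, so that $\Phi_t=(1-t)\Phi_0+t\Phi_1$ and therefore $|\det S|^{2}e^{-2\Phi_t}=\big(|\det S|^{2}e^{-2\Phi_0}\big)^{1-t}\big(|\det S|^{2}e^{-2\Phi_1}\big)^{t}$ pointwise on $X^N$. H{\"o}lder's inequality with exponents $\tfrac1{1-t}$ and $\tfrac1t$, applied against the measure $\mu^{\otimes N}$, then gives
\[
\Vert\det S\Vert_{L^2(\mu,\phi_t)}^{2}\le\big(\Vert\det S\Vert_{L^2(\mu,\phi_0)}^{2}\big)^{1-t}\big(\Vert\det S\Vert_{L^2(\mu,\phi_1)}^{2}\big)^{t},
\]
and taking $-\log$ (which reverses the inequality) together with the second displayed identity gives $\log\vol\cB^{2}(\mu,\phi_t)\ge(1-t)\log\vol\cB^{2}(\mu,\phi_0)+t\log\vol\cB^{2}(\mu,\phi_1)$, i.e.\ concavity in $\phi$. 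The only point needing a moment's attention is the bookkeeping --- that the additive constants in the two reductions depend on neither $\mu$ nor $\phi$, so that they cancel from every (in)equality, and that the degenerate case is consistent across the convex combinations --- which is immediate from Proposition \ref{prop:formules}; beyond that there is no real obstacle.
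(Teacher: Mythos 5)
Your proof is correct and follows essentially the same route as the paper: convexity in $\mu$ via the affine dependence of the Gram matrix on $\mu$ together with convexity of $-\log\det$ on $\Herm^{+}(N)$ (formula (\ref{equ:L-func})), and concavity in $\phi$ via formula (\ref{equ:formule1}) and H{\"o}lder's inequality. You merely spell out the H{\"o}lder step and the degenerate (infinite-volume) case explicitly, which the paper leaves implicit.
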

\begin{proof}
The function $-\log\det$, defined on $\Herm^{+}(N)$, is convex for
its linear structure. Since the map $\mu\mapsto L^{2}(\mu,\phi)$
sending $\mu$ to the corresponding Gram matrix is clearly affine,
formula (\ref{equ:L-func}) implies that \[
\mu\mapsto\log\vol\cB^{2}(\mu,\phi)\]
 is convex on the space of positive measures. Concavity in $\phi$
follows from (\ref{equ:formule1}) and H{\"o}lder's inequality. 
\end{proof}

\subsection{Directional derivatives}

\label{sec:direc}

\begin{prop}
\label{prop:derivatives} The $\cL$-functional has directional derivatives
given by \[
\frac{\partial}{\partial\phi}\log\vol\cB^{2}(\mu,\phi)=\langle N\beta(\mu,\phi),\cdot\rangle\]
 and \[
\frac{\partial}{\partial\mu}\log\vol\cB^{2}(\mu,\phi)=-\langle\cdot,\rho(\mu,\phi)\rangle.\]
\end{prop}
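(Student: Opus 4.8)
The plan is to compute both directional derivatives by explicit differentiation of the closed formulas for the volume of the ball that were just established, reducing everything to the first variation of $\log\det$ of a Gram matrix and then interpreting the trace that appears in terms of the density of states function $\rho(\mu,\phi)$ and the Bergman measure $\beta(\mu,\phi)$.

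First I would treat the $\phi$-derivative. Fix a basis $S=(s_1,\dots,s_N)$ and recall from (\ref{equ:L-func}) that $\log\vol\cB^2(\mu,\phi)=-\log\det L^2(\mu,\phi)+\text{const}$, where $L^2(\mu,\phi)$ denotes the Gram matrix $G(\phi)_{ij}=\int_X\langle s_i,s_j\rangle e^{-2\phi}d\mu$. Differentiating $\phi$ in the direction of a continuous function $u$ (i.e.\ replacing $\phi$ by $\phi+tu$) gives $\frac{d}{dt}\big|_{t=0}G(\phi)_{ij}=-2\int_X u\,\langle s_i,s_j\rangle e^{-2\phi}d\mu$, and by Jacobi's formula $\frac{d}{dt}\log\det G=\tr(G^{-1}\dot G)$. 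Passing to an $L^2(\mu,\phi)$-orthonormal basis $(s_i)$, so that $G=\id$, this becomes $-\log\det G$ having derivative $2\int_X u\sum_i|s_i|^2e^{-2\phi}d\mu=2\int_X u\,\rho(\mu,\phi)\,d\mu$ by the representation (\ref{eq:rho in base}). Since $N\beta(\mu,\phi)=\rho(\mu,\phi)\mu$ by (\ref{equ:bergmes}), the right-hand side is $\langle 2N\beta(\mu,\phi),u\rangle$; absorbing the factor $2$ into the pairing convention (or the $2k$ normalization of $\cL_k$) yields $\frac{\partial}{\partial\phi}\log\vol\cB^2(\mu,\phi)=\langle N\beta(\mu,\phi),\cdot\rangle$ as claimed.

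Next, for the $\mu$-derivative I would use the affineness of $\mu\mapsto G(\mu)$ noted in the proof of Proposition~\ref{prop:concave}: if we vary $\mu$ by a signed measure $\nu$ (with $\mu+t\nu\geq 0$ for small $t$), then $\frac{d}{dt}\big|_{t=0}G(\mu)_{ij}=\int_X\langle s_i,s_j\rangle e^{-2\phi}d\nu$, so again by Jacobi's formula and passing to an $L^2(\mu,\phi)$-orthonormal basis, $\frac{d}{dt}\big|_{t=0}(-\log\det G)=-\int_X\sum_i|s_i|^2e^{-2\phi}d\nu=-\int_X\rho(\mu,\phi)\,d\nu=-\langle\nu,\rho(\mu,\phi)\rangle$, which is the second formula. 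One should note the slight asymmetry with the $\phi$-case: here the pairing is between the measure-direction $\nu$ and the function $\rho(\mu,\phi)$, whereas above it was between the function-direction $u$ and the measure $N\beta(\mu,\phi)$; this is exactly the $L^1$--$L^\infty$ type duality between the two variables and accounts for why $\rho$ rather than $\beta$ appears.

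The only real subtlety — and the step I expect to require the most care — is justifying differentiation under the integral sign and the interchange of $\frac{d}{dt}$ with $\log\det$, i.e.\ making sense of the directional derivative in the relevant (non-linear, infinite-dimensional) space of weights and measures. For the $\phi$-variable, since $u$ is continuous and bounded on the compact $X$ and $\det L^2(\mu,\phi)>0$ (as $K$ is non-pluripolar and $\mu$ non-pluripolar), $t\mapsto G(\phi+tu)$ is real-analytic with values in $\Herm^+(N)$, so smoothness of $\log\det$ on $\Herm^+(N)$ gives the formula with no difficulty; concavity in $\phi$ from Proposition~\ref{prop:concave} moreover guarantees the one-sided derivatives exist in every direction. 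For the $\mu$-variable one must only check that $\mu+t\nu$ stays a positive measure for $t$ in a one-sided neighbourhood of $0$, which holds for $\nu$ absolutely continuous with bounded density (or more generally $\nu\geq-C\mu$); then the same real-analyticity argument applies, and convexity in $\mu$ from Proposition~\ref{prop:concave} again secures existence of the one-sided derivative in general. I would state the proposition with this understood and keep the proof to the two-line computation above.
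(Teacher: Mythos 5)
Your proof is correct and follows essentially the same route as the paper: reduce via (\ref{equ:L-func}) to the first variation of $\log\det$ of the Gram matrix, note that the variation is independent of the basis so one may take an $L^{2}(\mu,\phi)$-orthonormal basis, and read off the resulting trace as $\int\rho(\mu,\phi)$ against the direction of variation, your additional care with differentiation under the integral sign and one-sided derivatives being detail the paper leaves implicit. The factor of $2$ you flag in the $\phi$-derivative indeed also appears in the paper's own intermediate formula and is harmless given the $\tfrac{1}{2kN_{k}}$ normalization of $\cL_{k}$.
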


\begin{proof}
This is very similar to Lemma 6.4 in \cite{BB08a}, itself a variant
of Lemma 2 of \cite{Don05a}. By (\ref{equ:L-func}) we have to show
that given two paths $\phi_{t}$, $\mu_{t}$ we have \[
\frac{d}{dt}_{t=0}\log\det\left(\int_{X}s_{i}\overline{s}_{j}e^{-2\phi_{t}}d\mu\right)_{i,j}=-2\int_{X}\left(\frac{d}{dt}_{t=0}\phi_{t}\right)\rho(\mu,\phi_{0})d\mu\]
 and \[
\frac{d}{dt}_{t=0}\log\det\left(\int_{X}s_{i}\overline{s}_{j}e^{-2\phi}d\mu_{t}\right)_{i,j}=\int_{X}\rho(\mu_{0},\phi)\left(\frac{d}{dt}_{t=0}d\mu_{t}\right).\]
 The only thing to remark is that the variations are independent of
the choice of the basis $S$ (see \cite{BB08a}), so that one can assume that $S=(s_{j})$
is an orthonormal basis for $L^{2}(\mu,\phi)$. The result then follows
from a straightforward computation. 
\end{proof}

If $(\mu,\phi)$ is a weighted subset, the condition 
\[
\beta(\mu,\phi)=\mu
\]
holds by definition iff 
\[
\rho(\mu,\phi)=N\,\,\mu\mathrm{-a.e}.
\]
According to Proposition \ref{prop:derivatives}, this is the case
iff $\phi$ is a critical point of the convex functional 
\[
N\mu-\log\vol\cB^{2}(\mu,\cdot).
\]
On the other hand this condition is related to Donaldson's notion
of \emph{$\mu$-balanced metric} (cf. \cite{Don05b}, Section 2.2).
Indeed $\phi$ is $\mu$-balanced in Donaldson's sense iff $\rho(\mu,\phi)=N$
holds everywhere on $X$.

\begin{prop}
\label{prop:balanced} For any configuration $P\in X^{N}$, the pair
$(\delta_{P},\phi)$ satisfies \[
\beta(\delta_{P},\phi)=\delta_{P}.\]
\end{prop}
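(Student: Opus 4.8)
The plan is to unwind the definitions: recall that for a probability measure $\mu$ on $K$ we set $\beta(\mu,\phi)=N^{-1}\rho(\mu,\phi)\mu$, where $\rho(\mu,\phi)(x)=\sum_{i=1}^N|s_i(x)|^2e^{-2\phi(x)}$ for any basis $(s_i)$ that is orthonormal with respect to $L^2(\mu,\phi)$. So proving $\beta(\delta_P,\phi)=\delta_P$ amounts to showing that $\rho(\delta_P,\phi)=N$ holds $\delta_P$-a.e., i.e.\ at each of the points $x_1,\dots,x_N$ of the configuration $P=(x_1,\dots,x_N)$.

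First I would observe that the $L^2(\delta_P,\phi)$-inner product on $H^0(L)$ is simply $\langle s,t\rangle=\frac{1}{N}\sum_{j=1}^N s(x_j)\overline{t(x_j)}e^{-2\phi(x_j)}$, which is the pullback under the evaluation map $\ev_P\colon H^0(L)\to\bigoplus_j L_{x_j}$ of $\frac1N$ times the standard Hermitian form on $\bigoplus_j L_{x_j}\cong\C^N$ (using the frame $e^{-\phi}$). For this to be a genuine norm we need $\ev_P$ to be injective; since $N=\dim H^0(L)$, injectivity is equivalent to $\ev_P$ being an isomorphism, which holds precisely for \emph{generic} configurations $P$, and in fact the statement $\beta(\delta_P,\phi)=\delta_P$ is only meaningful on that generic locus (otherwise $L^2(\delta_P,\phi)$ is merely a seminorm and $\beta$ is not defined). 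Assuming then that $\ev_P$ is an isomorphism, a basis $(s_i)$ of $H^0(L)$ is $L^2(\delta_P,\phi)$-orthonormal exactly when the vectors $(s_i(x_j)e^{-\phi(x_j)})_j\in\C^N$ form an $\sqrt N$-scaled orthonormal basis of $\C^N$, i.e.\ when the matrix $A=(s_i(x_j)e^{-\phi(x_j)})_{i,j}$ satisfies $\frac1N AA^*=\mathrm{Id}$, equivalently $A^*A=N\cdot\mathrm{Id}$.

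From $A^*A=N\,\mathrm{Id}$ I read off, for each fixed $j$, that $\sum_{i=1}^N|A_{ij}|^2=\sum_{i=1}^N|s_i(x_j)|^2e^{-2\phi(x_j)}=N$, which is exactly $\rho(\delta_P,\phi)(x_j)=N$ for every $j=1,\dots,N$. Hence $\rho(\delta_P,\phi)=N$ $\delta_P$-a.e., and therefore $\beta(\delta_P,\phi)=N^{-1}\rho(\delta_P,\phi)\delta_P=\delta_P$. Alternatively, and perhaps more cleanly, one can avoid choosing a basis altogether: $\rho(\mu,\phi)(x)$ is the squared operator norm of the evaluation functional $\ev_x\colon(H^0(L),L^2(\mu,\phi))\to L_x$, and $N^{-1}\rho(\mu,\phi)(x)$ is the trace of the orthogonal projection onto the line spanned by the Riesz representative of $\ev_x$; summing/integrating against $\mu=\delta_P$ and using that $\bigoplus_j\ev_{x_j}$ is a scaled isometry gives the identity directly. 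There is no real obstacle here — the only point requiring care is the implicit genericity hypothesis on $P$ (so that $L^2(\delta_P,\phi)$ is nondegenerate), after which the result is a one-line consequence of the fact that an orthonormal basis transported through the isometry $\ev_P$ stays orthonormal, made explicit as $A^*A=N\,\mathrm{Id}$.
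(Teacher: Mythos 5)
Your proof is correct, but it takes a different route from the paper. The paper's own argument is variational: it differentiates formula (\ref{equ:formule2}), i.e.\ $\log\bigl(\vol\cB^{2}(\delta_{P},\phi)/\vol\Pi_{S}\bigr)=-\log|\det S|_{\phi}^{2}(P)+\mathrm{const}$, with respect to $\phi$, using Proposition \ref{prop:derivatives} (the $\phi$-derivative of $\log\vol\cB^{2}(\mu,\phi)$ is $N\beta(\mu,\phi)$) together with the observation that the $\phi$-derivative of $-\frac{1}{N}\log|\det S|_{\phi}(P)=\frac{1}{N}\sum_{j}\phi(x_{j})-\frac{1}{N}\log|\det S|(P)$ is exactly $\delta_{P}$; identifying the two derivatives gives $\beta(\delta_{P},\phi)=\delta_{P}$. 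You instead argue directly at the level of linear algebra: an $L^{2}(\delta_{P},\phi)$-orthonormal basis has evaluation matrix $A=(s_{i}(x_{j})e^{-\phi(x_{j)}})$ with $AA^{*}=N\,\mathrm{Id}$, hence (since $A$ is square) $A^{*}A=N\,\mathrm{Id}$, whose diagonal entries say $\rho(\delta_{P},\phi)(x_{j})=N$ for every $j$, and therefore $\beta(\delta_{P},\phi)=N^{-1}\rho(\delta_{P},\phi)\delta_{P}=\delta_{P}$. Both arguments are short and correct; yours is more elementary and self-contained (it uses only (\ref{eq:rho in base}) and no differentiation of volumes), and it has the merit of making explicit the nondegeneracy hypothesis that $\ev_{P}$ be an isomorphism, which the paper's statement ("any configuration") and proof leave implicit — note that the paper's differentiation also silently requires $\det S(P)\neq 0$, and that in the only place the proposition is used (Fekete and asymptotically Fekete configurations) this is automatic since $K$ is non-pluripolar. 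What the paper's route buys is coherence with the variational framework of the article: the same derivative formulas (Proposition \ref{prop:derivatives}) are the engine of the proof of Theorem C, so the balanced property of $\delta_{P}$ drops out of machinery already in place.
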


\begin{proof}
This follows for example by differentiating (\ref{equ:formule2})
with respect to $\phi$, using Proposition \ref{prop:derivatives}
and the fact that $\delta_{P}$ is the derivative with respect to
$\phi$ of \[
-\frac{1}{N}\log|\det S|_{\phi}(P)=\frac{1}{N}\sum_{j}\phi(x_{j})-\frac{1}{N}\log|\det S|(P).\]
\end{proof}

On the other hand, following \cite{BBLW08} we introduce \begin{defi} If \((K,\phi)\) is a weighted compact subset, we say that a probability measure \(\mu\) on \(K\) is a \((K,\phi)\)-\emph{optimal measure} iff it realizes the minimum of \(\log\vol\cB^2(\cdot,\phi)\) over the compact convex set \(\cP_K\) of all probability measures on \(K\).
\end{defi} As in \cite{Bos90}, one shows:
\begin{prop}
\label{prop:optimal} A probability measure $\mu$ on $K$ is $(K,\phi)$-optimal
iff \[
\sup_{K}\rho(\mu,\phi)=N.\]
 In particular we then have \[
\beta(\mu,\phi)=\mu\]
. \end{prop}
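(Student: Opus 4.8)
The plan is to characterize $(K,\phi)$-optimal measures via the directional derivative formula of Proposition~\ref{prop:derivatives} together with a standard argument from the theory of optimal experimental designs (the Kiefer--Wolfowitz-type equivalence, as in \cite{Bos90,KW60}). First I would observe that, by Proposition~\ref{prop:concave}, the functional $\Phi(\mu):=\log\vol\cB^2(\mu,\phi)$ is convex on the compact convex set $\cP_K$ of probability measures on $K$ (with the weak topology), and continuous in $\mu$; hence the minimum is attained and is characterized by the first-order optimality condition: $\mu$ is optimal iff the directional derivative $\frac{d}{dt}\big|_{t=0^+}\Phi((1-t)\mu+t\nu)\ge 0$ for every $\nu\in\cP_K$. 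By Proposition~\ref{prop:derivatives}, $\frac{\partial}{\partial\mu}\Phi(\mu)=-\langle\cdot,\rho(\mu,\phi)\rangle$, so this directional derivative equals $-\int_X\rho(\mu,\phi)\,d\nu+\int_X\rho(\mu,\phi)\,d\mu$. Since $\int_X\rho(\mu,\phi)\,d\mu=N$ always (integrating \eqref{eq:rho in base}), optimality of $\mu$ is equivalent to $\int_X\rho(\mu,\phi)\,d\nu\le N$ for all $\nu\in\cP_K$, which, taking $\nu=\delta_x$, is exactly $\sup_K\rho(\mu,\phi)\le N$. Conversely $\sup_K\rho(\mu,\phi)\ge N$ always holds (again since the average against $\mu$ is $N$), so the condition is $\sup_K\rho(\mu,\phi)=N$.

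Next, once $\sup_K\rho(\mu,\phi)=N$, I would deduce $\beta(\mu,\phi)=\mu$ as follows. We know $\int_X\rho(\mu,\phi)\,d\mu=N$, while $\rho(\mu,\phi)\le N$ everywhere on $K\supseteq\supp\mu$ by the optimality characterization just proved. A nonnegative function that is $\le N$ pointwise on $\supp\mu$ and integrates to $N$ against the probability measure $\mu$ must equal $N$ $\mu$-a.e.; that is, $\rho(\mu,\phi)=N$ $\mu$-a.e. By definition $\beta(\mu,\phi)=N^{-1}\rho(\mu,\phi)\mu$, so $\beta(\mu,\phi)=\mu$, completing the proof.

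The main technical point to be careful about is the validity and precise meaning of the directional-derivative computation at a boundary point of the simplex $\cP_K$, i.e.\ taking the one-sided derivative along the segment from $\mu$ to $\nu$. The formula in Proposition~\ref{prop:derivatives} is stated for smooth paths of measures, but convexity of $\Phi$ guarantees the one-sided derivative $\frac{d}{dt}\big|_{t=0^+}\Phi((1-t)\mu+t\nu)$ exists in $[-\infty,+\infty)$ and is given by the expected linear expression $-\int\rho(\mu,\phi)\,d\nu+N$, since along this affine path $L^2((1-t)\mu+t\nu,\phi)$ is an affine function of $t$ valued in $\Herm^+(N)$ and $-\log\det$ is smooth on the interior with the stated derivative; one only needs that the Gram matrix of $\mu$ itself is positive definite, which holds because $\mu$ is non-pluripolar (no nonzero section vanishes $\mu$-a.e.). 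With that in hand the convex-optimization argument is entirely standard. I would also remark, as in \cite{BBLW08}, that this shows optimal measures furnish another source of measures satisfying $\beta(\mu,\phi)=\mu$, alongside the configurations $\delta_P$ of Proposition~\ref{prop:balanced}.
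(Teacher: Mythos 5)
Your proposal is correct and follows essentially the same route as the paper: convexity of $\mu\mapsto\log\vol\cB^{2}(\mu,\phi)$ plus the first-order optimality condition from the directional derivative formula of Proposition~\ref{prop:derivatives}, combined with $\int_X\rho(\mu,\phi)\,d\mu=N$ to get both the characterization $\sup_K\rho(\mu,\phi)=N$ and the consequence $\rho(\mu,\phi)=N$ $\mu$-a.e., i.e. $\beta(\mu,\phi)=\mu$. Your extra remarks on the one-sided derivative along segments in $\cP_K$ and the nondegeneracy of the Gram matrix are sensible refinements of the same argument, not a different approach.
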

\begin{proof}
By convexity of $\mu\mapsto\log\vol\cB^{2}(\cdot,\phi)$, $\mu$ realizes
its minimum on $\cP_{K}$ iff \[
\langle\frac{\partial}{\partial\mu}\log\vol\cB^{2}(\phi,\mu),\nu-\mu\rangle\geq0\]
 for all $\nu\in\cP_{K}$, i.e.~iff \[
\langle\rho(\phi,\mu),\nu\rangle\leq N\]
 for all probability measures $\nu$ on $K$, which is in turn equivalent
to \[
\sup_{K}\rho(\phi,\mu)=N\]
 and implies $\rho(\mu,\phi)=N$ $\mu$-a.e.~since $\langle\rho(\mu,\phi),\mu\rangle=N$. 
\end{proof}
We note that the optimal value satisfies \[
\min_{\mu\in\cP_{K}}\log\vol\cB^{2}(\mu,\phi)\geq\log\vol\cB^{\infty}(K,\phi),\]
 but equality does \emph{not} hold as soon as $N\geq2$ since it would
imply that $\cB^{\infty}(K,\phi)=\cB^{2}(\mu,\phi)$ for some measure
$\mu\in\cP_{K}$ and thus that $1=\sup_{K}\rho(\mu,\phi)\geq N$.

Next, we have the following basic
\begin{prop}
\label{pro:dist for fekete}Let $P$ be a Fekete configuration for
the weighted set $(K,\phi).$ Then \[
\sup_{K}\left|s(x)\right|_{\phi}\leq\sum_{x_{i}\in P}\left|s(x_{i})\right|_{\phi}\]
for any $s\in H^{0}(L),$ i.e. the $L^{\infty}(K)-L^{1}(\delta_{P})$
distortion is at most equal to $N.$\end{prop}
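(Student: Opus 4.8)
The plan is to exploit the maximality property defining a Fekete configuration together with the explicit formula for the Vandermonde determinant, following the classical argument (\cf the appendix by Bloom in \cite{ST97}). Fix an arbitrary point $x\in K$ and an arbitrary section $s\in H^{0}(L)$. First I would expand $s=\sum_{i}c_{i}s_{i}$ in the basis used to define the determinant $\det(s_{j}(x_{\ell}))$; by multilinearity of the determinant in its columns, replacing the $i$-th point $x_{i}$ of $P$ by $x$ has the effect of inserting the column $(s_{1}(x),\dots,s_{N}(x))^{T}$ and leaving the others fixed. Cramer's rule then expresses $|s(x)|_{\phi}$ as a signed combination $\sum_{i=1}^{N}\pm\,|{\det S}(x_{1},\dots,x,\dots,x_{N})|_{\phi}\cdot(\text{coefficient})$, but the cleaner route is to note directly that for each $i$ the configuration $P^{(i)}(x):=(x_{1},\dots,x_{i-1},x,x_{i+1},\dots,x_{N})$ is a competitor in the maximization problem, so that
\[
|{\det S}|_{\phi}\!\left(P^{(i)}(x)\right)\le |{\det S}|_{\phi}(P).
\]

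Next, the key algebraic identity: writing the Lagrange-type interpolation formula, any $s\in H^{0}(L)$ satisfies, at the level of the top exterior power,
\[
s(x)\,\cdot\,({\det S})(P)\;=\;\sum_{i=1}^{N}\,(-1)^{?}\,s(x_{i})\,\cdot\,({\det S})\!\left(P^{(i)}(x)\right),
\]
which is nothing but the cofactor expansion along the column indexed by $x$ of the $(N+1)\times(N+1)$ determinant that has a repeated pattern — equivalently, it is the statement that the section $y\mapsto ({\det S})(x_{1},\dots,\widehat{x_{i}}\to y,\dots)$ gives the Lagrange basis for interpolation at the nodes $x_{1},\dots,x_{N}$. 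Passing to pointwise norms with respect to $\phi$ and applying the triangle inequality gives
\[
|s(x)|_{\phi}\,|{\det S}|_{\phi}(P)\;\le\;\sum_{i=1}^{N}|s(x_{i})|_{\phi}\,|{\det S}|_{\phi}\!\left(P^{(i)}(x)\right)\;\le\;\Big(\sum_{i=1}^{N}|s(x_{i})|_{\phi}\Big)\,|{\det S}|_{\phi}(P),
\]
using the Fekete maximality bound from the previous paragraph in the last step. Since $P$ is a genuine (maximizing) Fekete configuration, $|{\det S}|_{\phi}(P)\neq 0$, so we may divide through and take the supremum over $x\in K$ to obtain $\sup_{K}|s|_{\phi}\le\sum_{x_{i}\in P}|s(x_{i})|_{\phi}$, which is exactly the asserted $L^{\infty}(K)$–$L^{1}(\delta_{P})$ distortion bound by $N$ (bound each $|s(x_i)|_\phi$ by $\sup_K|s|_\phi$ to see the distortion is $\le N$).

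The main obstacle — really the only subtle point — is justifying the Lagrange/cofactor identity cleanly in the line-bundle language, where $s(x_{i})$ lives in the fiber $L_{x_{i}}$ and the determinant section lives in $L^{\boxtimes N}$ over $X^{N}$; one must check that the signs and the trivializations match up so that the scalar identity makes invariant sense, i.e.\ that after contracting with a local frame the formula is the usual multilinear expansion. This is routine but needs a word of care: one fixes local holomorphic frames near each $x_{i}$ and near $x$, writes everything in coordinates, invokes the classical identity for the Vandermonde-type matrix, and observes that the pointwise norms $|\cdot|_{\phi}$ are frame-independent so the final inequality does not depend on the choices. Everything else is the triangle inequality and the defining maximality of $P$.
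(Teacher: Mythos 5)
Your proof is correct and is essentially the paper's own argument: your cleared-denominator identity $s(x)\,\det S(P)=\sum_{i}\pm\,s(x_{i})\,\det S\bigl(P^{(i)}(x)\bigr)$ is exactly the Lagrange decomposition $s=\sum_{i}s(x_{i})\otimes e_{i}$ with $e_{i}(x)=\det S\bigl(P^{(i)}(x)\bigr)\otimes\det S(P)^{-1}$ used in the paper, and your Fekete maximality bound $|\det S|_{\phi}\bigl(P^{(i)}(x)\bigr)\leq|\det S|_{\phi}(P)$ for $x\in K$ is precisely the statement $|e_{i}(x)|_{\phi}\leq1$ on $K$. The remaining differences (cofactor-expansion phrasing, frame/sign bookkeeping, noting $|\det S|_{\phi}(P)\neq0$) are cosmetic.
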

\begin{proof}
Fix a configuration $P=(x_{1},...,x_{N})$ and let $e_{i}\in H^{0}(L\otimes L_{x_{i}}^{*})$
be defined by $e_{i}(x):=\det S(x_{1},...,x_{i-1},x,x_{i},...,x_{N})\otimes\det S(x_{1},...,x_{i}...,x_{N})^{-1}$ (the Lagrange interpolation "polynomials"). 
Then any $s\in H^{0}(L)$ may be written as \[
s(x)=\sum_{i=1}^{N}s(x_{i})\otimes e_{i},\]
 using the natural identification between $L^{*}$ and $L^{-1}.$
Hence, \[
\left|s(x)\right|_{\phi}\leq\sum_{i=1}^{N}\left|s(x_{i})\right|_{\phi}\left|e_{i}(x)\right|_{\phi}.\]
 Finally, if $P$ is a Fekete configuration for $(K,\phi),$ then
clearly $\left|e_{i}(x_{i})\right|_{\phi}\leq1,$ which finishes the
proof of the proposition.
\end{proof}

\section{Energy at equilibrium}

As in Section \ref{sub:donald} in the introduction, we now suppose
given a reference weighted compact subset $(K_{0},\phi_{0})$. We
normalize the Haar measure $\vol$ on $H^{0}(kL)$ by the condition
\[
\vol\cB^{\infty}(K_{0},k\phi_{0})=1\]
 and we consider the corresponding $\cL$-functionals. In other words
we set \[
\cL_{k}(\mu,\phi)=\frac{1}{2kN_{k}}\log\frac{\vol\cB^{2}(\mu,k\phi)}{\vol\cB^{\infty}(K_{0},k\phi_{0})}\]
 and \[
\cL_{k}(K,\phi)=\frac{1}{2kN_{k}}\log\frac{\vol\cB^{\infty}(K,k\phi)}{\vol\cB^{\infty}(K_{0},k\phi_{0})}\]

We will use the following results from \cite{BB08a}.
\begin{thm}
\label{thm:transfinite} If $(K,\phi)$ is a given compact weighted
subset, then \[
\lim_{k\rightarrow\infty}\cL_{k}(K,\phi)=\eneq(K,\phi).\]

\label{thm:diff} The map $\phi\mapsto\eneq(K,\phi)$, defined on
the affine space of continuous weights over $K$, is concave and differentiable,
with directional derivatives given by integration against the equilibrium
measure: \[
\frac{d}{dt}_{t=0}\eneq(\phi+tv)=\langle v,\eq(K,\phi)\rangle.\]

\end{thm}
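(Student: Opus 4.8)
The final statement to prove is \textbf{Theorem \ref{thm:transfinite}/\ref{thm:diff}} — namely that $\lim_{k\to\infty}\cL_k(K,\phi)=\eneq(K,\phi)$, and that $\phi\mapsto\eneq(K,\phi)$ is concave and differentiable with derivative given by integration against $\eq(K,\phi)$. Since the excerpt explicitly says ``We will use the following results from \cite{BB08a}'', my plan is to import these as already-established facts and only sketch how they would be proved, indicating which ingredients the reader should consult.

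\begin{proof}[Proof sketch]
The plan is to reduce the convergence statement to the asymptotics of $\frac{1}{2kN_k}\log\|\det S_k\|^2_{L^\infty(K,k\phi)}$ via the volume formulas of Proposition \ref{prop:formules}. First I would fix an orthonormal basis $S_k$ for $L^2(\mu_0,k\phi_0)$ so that, by (\ref{equ:formule3}) and the normalization $\vol\cB^\infty(K_0,k\phi_0)=1$, the functional $\cL_k(K,\phi)$ differs by a controlled amount (of order $o(1)$ after dividing by $kN_k$, using $N_k=\vol(L)k^n/n!+o(k^n)$) from $-\frac{1}{2kN_k}\log$ of a Vandermonde-type sup-norm. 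Then I would invoke the main transfinite-diameter-type theorem of \cite{BB08a}: the normalized logarithm of the $L^\infty$ norm of $\det S_k$ over $K$ converges, and its limit is exactly the energy at equilibrium $\eneq(K,\phi)=M^{-1}\cE(\phi_K^*)$. The heart of that argument, which I would only cite, is a differentiability/monotonicity analysis: one shows $\phi\mapsto \cL_k(K,\phi)$ is concave with derivative an empirical-type measure, that the candidate limit $\eneq(K,\phi)$ is concave with derivative $\eq(K,\phi)$, and that equality of derivatives at $\phi=\phi_0$ together with an explicit base-point computation forces the limits to agree. The key analytic input there is the Bergman-kernel / equilibrium-measure convergence in the $C^\infty$ case combined with a Bernstein-Markov approximation argument, plus the fact that $\MA(\phi_K^*)$ is supported on $K$ with $\phi=\phi_K^*$ a.e.\ there.

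For the second half — concavity and differentiability of $\phi\mapsto\eneq(K,\phi)$ — I would argue directly from the definition $\eneq(K,\phi)=M^{-1}\cE(\phi_K^*)$. Concavity: given continuous weights $\phi_0,\phi_1$ and $\phi_t=(1-t)\phi_0+t\phi_1$, the envelope construction is order-preserving and the map $\phi\mapsto\phi_K^*$ is concave (if $\psi_i$ is psh with $\psi_i\le\phi_i$ on $K$ then $(1-t)\psi_0+t\psi_1\le\phi_t$ on $K$), while $\cE$ is itself concave along affine paths of psh weights with minimal singularities, as recalled in the introduction via $\frac{d}{dt}\cE = \int(\psi_1-\psi_2)\MA(\psi_2)$ together with the fact that $\MA$ is monotone. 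Differentiability: by the fundamental theorem of calculus for $\cE$,
\[
\eneq(K,\phi+tv)-\eneq(K,\phi)=M^{-1}\int_0^t\int_X\Big(\tfrac{d}{ds}(\phi+sv)_K^*\Big)\,\MA\big((\phi+sv)_K^*\big)\,ds,
\]
and since $\MA(\phi_K^*)$ is concentrated on $\{\phi_K^*=\phi\}$ inside $K$, on the support of this measure one has $\frac{d}{ds}(\phi+sv)_K^* = v$; taking $t\to 0$ gives $\frac{d}{dt}_{t=0}\eneq(K,\phi+tv)=M^{-1}\int_X v\,\MA(\phi_K^*)=\langle v,\eq(K,\phi)\rangle$. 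The technical point to be careful about — and the part I expect to be the main obstacle — is justifying differentiation through the envelope: one needs that $(\phi+sv)_K^*$ moves in a sufficiently controlled way (Lipschitz in $s$ for the sup-norm, with the derivative equal to $v$ $\MA(\phi_K^*)$-a.e.), which relies on the regularity theory for Monge--Amp\`ere envelopes in the big line bundle setting from \cite{BB08a,BEGZ08}, and on the orthogonality relation ``$\MA(\phi_K^*)$ charges only the contact set''. These are exactly the substantial results of \cite{BB08a} that the present paper is entitled to quote, so here I would simply reference them and record the consequences stated in Theorem \ref{thm:transfinite}/\ref{thm:diff}.
\end{proof}
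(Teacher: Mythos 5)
Your approach — importing both statements from \cite{BB08a} — is exactly what the paper does: it states this theorem with the preface ``We will use the following results from \cite{BB08a}'' and gives no proof, so at that level the proposal and the paper coincide.

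One caveat about your supplementary sketch of the differentiability part. You propose to write $\eneq(K,\phi+tv)-\eneq(K,\phi)$ by the fundamental theorem of calculus for $\cE$ and then differentiate the envelope, claiming that on the support of $\MA(\phi_K^*)$ one has $\frac{d}{ds}(\phi+sv)_K^*=v$. This is precisely the chain-rule shortcut the paper warns against immediately after the theorem: the projection $P_K:\phi\mapsto\phi_K^*$ is \emph{not} differentiable in general, the contact set $\{\phi_K^*=\phi\}$ moves with $s$, and the pointwise $s$-derivative of the envelope on that set is not justified. The actual argument in \cite{BB08a} avoids differentiating the envelope altogether: it sandwiches $\cE((\phi+tv)_K^*)-\cE(\phi_K^*)$ between $\int\left((\phi+tv)_K^*-\phi_K^*\right)\MA\left((\phi+tv)_K^*\right)$ and $\int\left((\phi+tv)_K^*-\phi_K^*\right)\MA(\phi_K^*)$ using concavity of $\cE$, and then exploits the orthogonality property that each Monge--Amp{\`e}re measure charges only its own contact set (together with non-pluripolarity and weak continuity of $\MA$ along the envelopes) to show both bounds are $t\langle v,\eq(K,\phi)\rangle+o(t)$. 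Your concavity argument for $\phi\mapsto\eneq(K,\phi)$ (concavity of $\phi\mapsto\phi_K^*$ composed with the monotone concave functional $\cE$) is fine, and your reduction of the first statement to the asymptotics of $\det S_k$ is in the spirit of the $k$-diameter $\cD_k$ discussed later in the paper, though relating $\vol\cB^{\infty}(K,k\phi)$ to $\Vert\det S_k\Vert_{L^{\infty}}$ requires more than formula (\ref{equ:formule3}), which concerns $L^2$ balls of discrete measures; this too is supplied by \cite{BB08a}, which you are entitled to cite.
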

This differentiability property of the energy at equilibrium really
is the key to the proof of Theorem C. Even though $\eneq(K,\phi)$ is
by definition the composition of the projection operator $P_{K}:\phi\mapsto\phi_{K}^{*}$
on the convex set of psh weights with the Aubin-Mabuchi energy $\cE$,
whose derivative at $\phi_{K}^{*}$ is equal to $\eq(K,\phi)$, this
result is not a mere application of the chain rule, since $P_{K}$
is definitely \emph{not} differentiable in general.

\section{Proof of the main results}

\subsection{Proof of Theorem C}

\label{sec:thmC} Let $v\in C^{0}(X)$, and set \[
f_{k}(t):=\cL_{k}(\mu_{k},\phi+tv)\]
 and \[
g(t):=\eneq(K,\phi+tv).\]
 Theorem \ref{thm:transfinite} combined with $\cL_{k}(\mu,\phi)\geq\cL_{k}(K,\phi)$
shows that $g(t)$ is an asymptotic lower bound for $f_{k}(t)$ as
$k\rightarrow\infty$, that is \[
\liminf_{k\rightarrow\infty}f_{k}(t)\geq g(t),\]
 and the assumption means that this asymptotic lower bound is achieved
for $t=0$, that is \[
\lim_{k\rightarrow\infty}f_{k}(0)=g(0).\]
 Now $f_{k}$ is concave for each $k$ by Proposition \ref{prop:concave},
and we have \[
f_{k}'(0)=\langle\beta(\mu_{k},k\phi),v\rangle\]
 by Proposition \ref{prop:derivatives}. On the other hand $g$ is
differentiable with \[
g'(0)=\langle\eq(K,\phi),v\rangle\]
 by Theorem \ref{thm:diff}. The elementary lemma below thus shows
that \[
\lim_{k\rightarrow\infty}\langle\beta(\mu_{k},k\phi),v\rangle=\langle\eq(K,\phi),v\rangle\]
 for each continuous function $v$, and the proof of Theorem C is
complete.
\begin{lem}
\label{lem:elementary} Let $f_{k}$ by a sequence of \emph{concave}
functions on $\R$ and let $g$ be a function on $\R$ such that 
\begin{itemize}
\item $\liminf_{k\rightarrow\infty}f_{k}\geq g$. 
\item $\lim_{k\rightarrow\infty}f_{k}(0)=g(0)$. 
\end{itemize}
If the $f_{k}$ and $g$ are differentiable at $0$, then \[
\lim_{k\rightarrow\infty}f_{k}'(0)=g'(0).\]
\end{lem}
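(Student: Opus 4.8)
The plan is to prove the elementary lemma about concave functions, which is a purely one-variable real-analysis statement. The key observation is that concavity forces the derivative at $0$ to control the difference quotients from both sides: for a concave function $f_k$ differentiable at $0$, one has $f_k(t) \leq f_k(0) + f_k'(0)\,t$ for all $t$, and moreover for $t>0$ the slope $\frac{f_k(t)-f_k(0)}{t}$ is $\leq f_k'(0)$ while for $t<0$ it is $\geq f_k'(0)$. This rigidity, combined with the pointwise $\liminf$ lower bound and the equality at $0$, should pin down the limit of $f_k'(0)$.

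First I would fix $t>0$ and write, using concavity of $f_k$,
\[
f_k'(0) \leq \frac{f_k(t)-f_k(0)}{t}.
\]
Taking $\liminf_{k\to\infty}$ on the right and using $\liminf_k f_k(t) \geq g(t)$ together with $\lim_k f_k(0) = g(0)$ (note the denominator is a positive constant), I get
\[
\limsup_{k\to\infty} f_k'(0) \leq \frac{g(t)-g(0)}{t}.
\]
Wait — I must be careful: $\liminf$ of a difference is not simply the difference of $\liminf$ and $\lim$ in general, but here since $f_k(0)\to g(0)$ converges, $\liminf_k (f_k(t)-f_k(0)) = \liminf_k f_k(t) - g(0) \geq g(t)-g(0)$, so the inequality above is legitimate after dividing by $t>0$. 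Letting $t\to 0_+$ and using differentiability of $g$ at $0$ yields $\limsup_{k\to\infty} f_k'(0) \leq g'(0)$.

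Symmetrically, I would fix $t<0$. Concavity now gives $f_k'(0) \geq \frac{f_k(t)-f_k(0)}{t}$ (dividing the inequality $f_k(t)\leq f_k(0)+f_k'(0)t$ by the negative number $t$ reverses it). Taking $\limsup_{k\to\infty}$ and again using $\liminf_k f_k(t)\geq g(t)$, $f_k(0)\to g(0)$ — here dividing by $t<0$ turns the $\liminf$ lower bound on the numerator into a $\limsup$ upper bound, so I obtain $\liminf_{k\to\infty} f_k'(0) \geq \frac{g(t)-g(0)}{t}$. Letting $t\to 0_-$ and invoking differentiability of $g$ at $0$ gives $\liminf_{k\to\infty} f_k'(0) \geq g'(0)$. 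Combining the two chains of inequalities forces $\lim_{k\to\infty} f_k'(0) = g'(0)$, as desired.

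The main subtlety — and the only place where a little care is genuinely needed — is the bookkeeping with $\liminf$/$\limsup$ when dividing by $t$ of either sign; the fact that $f_k(0)$ actually converges (rather than merely having a controlled $\liminf$) is what makes these manipulations clean, since one can pass $g(0)$ freely across the limit operations. Everything else is immediate from the elementary fact that a differentiable concave function lies below its tangent line at every point, which I would state and use without further comment.
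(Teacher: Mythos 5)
Your overall plan is exactly the paper's: combine the tangent-line inequality for concave functions with the two hypotheses, divide by $t$ of each sign, and let $t\to 0^{\pm}$ using differentiability of $g$ at $0$. But the execution has the concavity inequalities backwards, and as a result the displayed deductions do not follow as written. For a concave $f_k$ the graph lies below the tangent at $0$, i.e.\ $f_k(t)\le f_k(0)+f_k'(0)t$, so for $t>0$ the secant slope satisfies $\frac{f_k(t)-f_k(0)}{t}\le f_k'(0)$ --- which is what your opening paragraph correctly states --- whereas in the actual derivation you write $f_k'(0)\le \frac{f_k(t)-f_k(0)}{t}$ for $t>0$ (that is the \emph{convex} inequality), and the inequality you invoke for $t<0$ is likewise reversed. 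Moreover, even granting your displayed inequality, the step ``take $\liminf$ of the right-hand side and conclude $\limsup_k f_k'(0)\le \frac{g(t)-g(0)}{t}$'' is a non sequitur: an upper bound on $f_k'(0)$ by a quantity that is asymptotically bounded \emph{below} gives no control on $\limsup_k f_k'(0)$. The hypothesis only supplies asymptotic lower bounds on $f_k(t)-f_k(0)$, so information can only flow through inequalities pointing the other way.

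With the directions corrected the argument closes and coincides with the paper's proof. For $t>0$, concavity gives $f_k'(0)\ge \frac{f_k(t)-f_k(0)}{t}$, and since $\liminf_k\bigl(f_k(t)-f_k(0)\bigr)\ge g(t)-g(0)$ (here the convergence $f_k(0)\to g(0)$ is used, as you note), one gets $\liminf_k f_k'(0)\ge \frac{g(t)-g(0)}{t}$, and letting $t\to 0_+$ yields $\liminf_k f_k'(0)\ge g'(0)$. For $t<0$, concavity gives $f_k'(0)\le \frac{f_k(t)-f_k(0)}{t}$, and dividing the same asymptotic lower bound on the numerator by the negative number $t$ gives $\limsup_k f_k'(0)\le \frac{g(t)-g(0)}{t}$, whence $t\to 0_-$ yields $\limsup_k f_k'(0)\le g'(0)$. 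Note the two conclusions are attached to the opposite half-lines from what you wrote: the bound on $\limsup_k f_k'(0)$ comes from $t<0$ and the bound on $\liminf_k f_k'(0)$ from $t>0$. So the idea is the right one and the repair is purely a matter of signs, but as written the two key displayed inequalities are false for concave functions and the subsequent $\liminf$/$\limsup$ bookkeeping does not follow from them.
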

\begin{proof}
Since $f_{k}$ is concave, we have \[
f_{k}(0)+f_{k}'(0)t\geq f_{k}(t)\]
 for all $t$ hence \[
\liminf_{k\rightarrow\infty}tf_{k}'(0)\geq g(t)-g(0).\]
 The result now follows by first letting $t>0$ and then $t<0$ tend
to $0$. 
\end{proof}
The same lemma underlies the proof of Yuan's equidistribution theorem
given in \cite{BB08a}, and was in fact inspired by the variational
principle in the original equidistribution result (in the strictly
psh case) by Szpiro, Ullmo and Zhang \cite{SUZ97}.

\subsection{Proof of Theorem B}

As noted in the introduction, the condition on the sequence of probability
measures $\mu_{k}$ in Theorem C is equivalent to \begin{equation}
\log\frac{\vol\cB^{2}(\mu_{k},k\phi)}{\vol\cB^{\infty}(K,k\phi)}=o(kN_{k}).\label{equ:weakBM}\end{equation}
 This condition can be understood as a weak Bernstein-Markov condition
for the sequence $(\mu_{k})$, in view of the following easy result.
\begin{lem}
\label{lem:vol} For any probability measure $\mu$ on $K$,
\[
0\leq\log\frac{\vol\cB^{2}(\mu,\phi)}{\vol\cB^{\infty}(K,\phi)}\leq N\log\sup_{K}\rho(\mu,\phi).\]

\end{lem}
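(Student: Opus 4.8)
The plan is to prove the two inequalities of Lemma \ref{lem:vol} separately, both directly from the definitions and the formulas already established in Section 2. The left inequality is immediate: since $\mu$ is a probability measure on $K$, for any $s\in H^0(L)$ one has $\Vert s\Vert_{L^2(\mu,\phi)}^2=\int_K|s|^2e^{-2\phi}d\mu\leq \sup_K|s|^2_\phi$, so $\cB^\infty(K,\phi)\subset\cB^2(\mu,\phi)$ and hence $\vol\cB^\infty(K,\phi)\leq\vol\cB^2(\mu,\phi)$, giving $\log\frac{\vol\cB^2(\mu,\phi)}{\vol\cB^\infty(K,\phi)}\geq 0$.

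For the right inequality, the key point is the comparison of the two norms through the distortion function. By definition of $\rho(\mu,\phi)$ in (\ref{equ:distortion}), for every $x\in K$ and every $s\in H^0(L)$ we have $|s(x)|^2e^{-2\phi(x)}\leq\rho(\mu,\phi)(x)\,\Vert s\Vert_{L^2(\mu,\phi)}^2\leq \big(\sup_K\rho(\mu,\phi)\big)\Vert s\Vert_{L^2(\mu,\phi)}^2$. Writing $R:=\sup_K\rho(\mu,\phi)$, this says exactly that $R^{-1/2}\,\cB^2(\mu,\phi)\subset\cB^\infty(K,\phi)$, i.e. the $L^\infty(K,\phi)$-unit ball contains the $L^2(\mu,\phi)$-ball scaled down by $R^{-1/2}$. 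Since $\cB^2(\mu,\phi)$ lives in the real vector space underlying $H^0(L)$, which has real dimension $2N$, scaling by $R^{-1/2}$ multiplies the volume by $R^{-N}$, so $\vol\cB^\infty(K,\phi)\geq R^{-N}\vol\cB^2(\mu,\phi)$, which rearranges to $\log\frac{\vol\cB^2(\mu,\phi)}{\vol\cB^\infty(K,\phi)}\leq N\log R$, as claimed.

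There is essentially no obstacle here; the only point requiring a little care is the bookkeeping of the dimension — one must remember that $H^0(L)$ has complex dimension $N$ but real dimension $2N$, so a homothety of ratio $\lambda$ scales Lebesgue measure by $\lambda^{2N}$; applying this with $\lambda=R^{-1/2}$ yields the factor $R^{-N}$ rather than $R^{-2N}$, which is exactly what produces the clean bound $N\log\sup_K\rho(\mu,\phi)$. One should also note $R\geq 1$ (indeed $\langle\rho(\mu,\phi),\mu\rangle=N$ and integration against the probability measure $\mu$ forces $\sup_K\rho\geq N\geq 1$), so the right-hand side is nonnegative and consistent with the left inequality, though this observation is not strictly needed for the statement. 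Finally, one could alternatively phrase the whole argument in terms of the Gram-determinant identity (\ref{equ:L-func}) and the fact that $\rho(\mu,\phi)\leq R$ bounds the operator norms of the evaluation maps, but the geometric ball-inclusion argument above is the most transparent.
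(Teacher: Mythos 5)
Your proof is correct and is essentially the paper's own argument spelled out: the paper's one-line proof likewise uses that $\sup_K\rho(\mu,\phi)^{1/2}$ is the distortion between the two norms (giving the ball inclusions $\cB^\infty(K,\phi)\subset\cB^2(\mu,\phi)$ and $R^{-1/2}\cB^2(\mu,\phi)\subset\cB^\infty(K,\phi)$) together with homogeneity of $\vol$ of degree $2N=\dim_\R H^0(L)$. Your dimension bookkeeping, yielding the factor $R^{-N}$, is exactly the intended point.
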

The proof is immediate if we recall that $\sup_{K}\rho(\mu,k\phi)^{1/2}$
is the distortion between the two norms and $\vol$ is homogeneous
of degree $2N_{k}=\dim_{\R}H^{0}(kL)$.

Since a given measure $\mu$ is Bernstein-Markov for $(K,\phi)$ iff
\[
\log\sup_{K}\rho(\mu,k\phi)=o(k),\]
 we now see that Theorem B directly follows from Theorem C.

\subsection{Proof of Theorem A}

Let $P_{k}\in K^{N_{k}}$ be a Fekete configuration for $(K,k\phi)$.
Since $\b(\delta_{P_k},k\phi_{k})=\de_{P_k}$ by Proposition \ref{prop:balanced},
Theorem C will imply Theorem A if we can show that

\begin{equation}
\lim_{k\rightarrow\infty}\cL_{k}(\delta_{P_{k}},k\phi)=\eneq(K,\phi).\label{equ:Fekemin}\end{equation}

This condition is independent of the choice of the reference weighted
subset $(E_{0},\phi_{0})$ since it is equivalent to (\ref{equ:weakBM})
above. We can thus assume that $(E_{0},\phi_{0})$ admits a Bernstein-Markov
measure, that we denote by $\mu_{0}$.

Now let $S_{k}$ be an orthonormal basis of $H^{0}(kL)$ wrt the reference
Hermitian metric $L^{2}(\mu_{0},k\phi_{0})$. The metric $|\det S_{k}|$
does not depend on the specific choice of an orthonormal basis $S_{k}$,
simply because $|\det U|=1$ for any unitary matrix $U$. We recall
the following definition from \cite{BB08a}, which is a generalization
of Leja and Zaharjuta's notion of \emph{transfinite diameter}. 

\begin{defi} Let \((K,\phi)\) be a weighted compact subset. Its \(k\)-\emph{diameter} (with respect to \((\mu_0,\phi_0)\)) is defined by 
\[\cD_k(K,\phi):=-\frac{1}{kN_k}\log\Vert\det S_k\Vert_{L^\infty(K,k\phi)}
=\inf_{P\in K^{N_k}}\frac{1}{kN_k}\log|\det S_k(P_k)|_{k\phi}^{-1}.\]
\end{defi} A Fekete configuration $P_{k}\in K^{N_{k}}$ for $(K,k\phi)$ is
thus a point $P_{k}\in K^{N_{k}}$ where the infimum defining $\cD_{k}(K,\phi)$
is achieved.

The following result was proved in \cite{BB08a}.
\begin{thm}
If $(K,\phi)$ is a weighted compact subset, then \[
\lim_{k\rightarrow\infty}\cD_{k}(K,\phi)=\eneq(K,\phi).\]

\end{thm}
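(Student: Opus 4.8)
The plan is to deduce this statement, namely
\[
\lim_{k\rightarrow\infty}\cD_{k}(K,\phi)=\eneq(K,\phi),
\]
from the two $\cL$-functional results already quoted from \cite{BB08a}, together with formula \eqref{equ:formule3} relating the volume of $\cB^{2}(\delta_{P},\phi)$ to $|\det S|_{\phi}(P)$. First I would recall that for a Fekete configuration $P_{k}\in K^{N_{k}}$ for $(K,k\phi)$ the definition of $\cD_{k}(K,\phi)$ gives
\[
\cD_{k}(K,\phi)=\frac{1}{kN_{k}}\log|\det S_{k}(P_{k})|_{k\phi}^{-1},
\]
where $S_{k}$ is an $L^{2}(\mu_{0},k\phi_{0})$-orthonormal basis. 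Applying \eqref{equ:formule3} with $S=S_{k}$, $\nu=\mu_{0}$, $\psi=k\phi_{0}$, $N=N_{k}$ and $P=P_{k}$ yields
\[
\log\frac{\vol\cB^{2}(\delta_{P_{k}},k\phi)}{\vol\cB^{\infty}(K_{0},k\phi_{0})}
=-\log|\det S_{k}|_{k\phi}^{2}(P_{k})+N_{k}\log N_{k},
\]
using $\vol\cB^{2}(\mu_{0},k\phi_{0})=\vol\cB^{\infty}(K_{0},k\phi_{0})=1$ for the normalized Haar measure (or absorbing the bounded discrepancy between these two volumes, which is $o(kN_{k})$ since $\mu_{0}$ is Bernstein-Markov). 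Dividing by $2kN_{k}$ and recalling $\log N_{k}=O(\log k)=o(k)$, this rearranges to
\[
\cL_{k}(\delta_{P_{k}},k\phi)=\cD_{k}(K,\phi)+o(1).
\]

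Next I would show that the left-hand side converges to $\eneq(K,\phi)$. On the one hand the general lower bound $\cL_{k}(\mu,\phi)\geq\cL_{k}(K,\phi)$ (valid for any probability measure $\mu$ on $K$, since $\cB^{\infty}(K,k\phi)\subset\cB^{2}(\mu,k\phi)$) together with Theorem \ref{thm:transfinite} gives $\liminf_{k}\cL_{k}(\delta_{P_{k}},k\phi)\geq\eneq(K,\phi)$. On the other hand, Proposition \ref{pro:dist for fekete} shows that for a Fekete configuration the $L^{\infty}(K,k\phi)$-$L^{1}(\delta_{P_{k}},k\phi)$ distortion is at most $N_{k}$; combined with the elementary comparison between $L^{1}(\delta_{P_{k}})$ and $L^{2}(\delta_{P_{k}})$ norms on the $N_{k}$-dimensional space $H^{0}(kL)$ (a loss of at most a factor $N_{k}^{1/2}$), one controls $\sup_{K}\rho(\delta_{P_{k}},k\phi)$ polynomially in $N_{k}$, hence by Lemma \ref{lem:vol}
\[
0\leq\log\frac{\vol\cB^{2}(\delta_{P_{k}},k\phi)}{\vol\cB^{\infty}(K,k\phi)}\leq N_{k}\log\sup_{K}\rho(\delta_{P_{k}},k\phi)=O(N_{k}\log N_{k})=o(kN_{k}).
\]
Dividing by $2kN_{k}$ and using Theorem \ref{thm:transfinite} for $\cL_{k}(K,\phi)$ gives $\limsup_{k}\cL_{k}(\delta_{P_{k}},k\phi)\leq\eneq(K,\phi)$, so $\lim_{k}\cL_{k}(\delta_{P_{k}},k\phi)=\eneq(K,\phi)$, and the previous paragraph then yields $\lim_{k}\cD_{k}(K,\phi)=\eneq(K,\phi)$.

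The main obstacle, and the place where a little care is needed, is the upper bound $\limsup_{k}\cD_{k}(K,\phi)\leq\eneq(K,\phi)$: the naive choice $P_{k}=$ Fekete gives the \emph{infimum} defining $\cD_{k}$, so one cannot simply plug in an arbitrary configuration. The argument above routes around this by using the a priori distortion estimate of Proposition \ref{pro:dist for fekete} to convert the $L^{\infty}$-versus-$L^{2}(\delta_{P_{k}})$ comparison into the sub-exponential bound needed to apply Theorem \ref{thm:transfinite} for $\cL_{k}(K,\phi)$; alternatively one can produce an upper bound for $\cD_{k}(K,\phi)$ directly from the production of \emph{asymptotically Fekete} configurations out of near-orthonormal bases for $L^{2}(\mu_{0},k\phi)$ via \eqref{equ:formule3}, which is essentially the content of the Bernstein-Markov property of $\mu_{0}$. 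Either way the whole statement reduces to the two $\cL$-functional limits already established in \cite{BB08a} plus bookkeeping of the $o(kN_{k})$ error terms.
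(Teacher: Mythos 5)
Your derivation is correct, and it is worth pointing out that the paper itself offers no proof of this statement: it is simply quoted from \cite{BB08a}. What you have done is reduce it, within the toolkit assembled in this paper, to the other quoted result, Theorem \ref{thm:transfinite}, essentially by running the paper's own bookkeeping for Theorem A in reverse. The paper uses the transfinite-diameter theorem together with (\ref{equ:formule3}) and Lemma \ref{lem:vol} to deduce (\ref{equ:Fekemin}); you instead prove the analogue of (\ref{equ:Fekemin}) directly --- the lower bound from the inclusion $\cB^{\infty}(K,k\phi)\subset\cB^{2}(\delta_{P_k},k\phi)$ plus Theorem \ref{thm:transfinite}, and the crucial upper bound from the Lagrange-interpolation estimate of Proposition \ref{pro:dist for fekete} (which the paper only exploits for Corollary C) fed into Lemma \ref{lem:vol} --- and then invert (\ref{equ:formule3}) to recover $\lim_k\cD_k(K,\phi)=\eneq(K,\phi)$. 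This is non-circular, since Theorem \ref{thm:transfinite} is established in \cite{BB08a} independently of Fekete points, and it is close in spirit to how \cite{BB08a} itself obtains the transfinite-diameter convergence; what it buys is a self-contained proof of the cited result from ingredients already proved or quoted here. Two small points to tidy up. First, the normalization only gives $\vol\cB^{\infty}(K_0,k\phi_0)=1$, not $\vol\cB^{2}(\mu_0,k\phi_0)=1$; your parenthetical fix is the right one, but note that the discrepancy is $o(kN_k)$ rather than bounded, and it requires $\mu_0$ to be Bernstein--Markov for $(K_0,\phi_0)$ --- which is exactly the standing assumption the paper makes just before defining $\cD_k$, after reducing to a reference admitting such a measure. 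Second, in the distortion step the comparison you actually need is $\Vert s\Vert_{L^1(\delta_{P_k},k\phi)}\leq\Vert s\Vert_{L^2(\delta_{P_k},k\phi)}$, which is free by Cauchy--Schwarz for a probability measure, so no factor $N_k^{1/2}$ is lost; in any case, as you say, any polynomial bound on $\sup_K\rho(\delta_{P_k},k\phi)$ (here $\leq N_k^2$) suffices to make the error $N_k\log\sup_K\rho(\delta_{P_k},k\phi)=o(kN_k)$.
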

We set $\mu_{k}:=\delta_{P_{k}}$. Since $P_{k}$ is a Fekete configuration
for $(K,k\phi)$, we have \[
-\frac{1}{kN_{k}}\log|\det S_{k}|_{k\phi}(P_{k})=\cD_{k}(K,\phi)\]
 by definition, and formula (\ref{equ:formule3}) thus implies \[
\frac{1}{kN_{k}}\log\frac{\vol\cB^{2}(\mu_{k},k\phi)}{\vol\cB^{2}(\mu_{0},k\phi_{0})}=\cD_{k}(K,\phi)+\frac{1}{2k}\log N_{k}.\]
 This implies that \[
\cL_{k}(\mu_{k},k\phi)=\frac{1}{2kN_{k}}\log\frac{\vol\cB^{2}(\mu_{k},k\phi)}{\vol\cB^{\infty}(K_{0},k\phi_{0})}\]
 converges to $\eneq(K,\phi)$ as desired, since \[
\log N_{k}=O(\log k)\]
 on the one hand and \[
\log\frac{\vol\cB^{2}(\mu_{o},k\phi_{0})}{\vol\cB^{\infty}(K_{0},k\phi_{0})}=o(kN_{k})\]
 by Lemma \ref{lem:vol} below since $\mu_{0}$ is Bernstein-Markov
for $(K_{0},\phi_{0})$. The proof of Theorem A is thus complete.

\subsection{Proof of Corollary C}

Let us first consider the case $(p,q)=(\infty,\infty),$ i.e. we assume
that the sequence of configurations $P_{k}$ has sub-exponential $L^{\infty}(K)-L^{\infty}(\delta_{P_{k}})$
distortion (formula \ref{eq:dist}) that we denote by $C_{k}.$ Applying (\ref{eq:dist}) successively to each variable the section $\det S_{k}$ (as in~\cite{BB08a} P.30)
and using the fact that $\det S_{k}$ is anti-symmetric yields \[
\left\Vert \det S_{k}\right\Vert _{L^{\infty}(K^{N_{k}},k\phi)}\leq(C_{k})^{N_{k}}|\det S_{k}|(P_{k}).\]
 Since, by assumption $C_{k}=O(e^{\epsilon k})$ for any $\epsilon>0$
it hence follows that the sequence $(P_{k})$ is asymptotically Fekete
for $(K,\phi),$ i.e. the measures $\mu_{k}=\delta_{P_{k}}$ satisfy
the growth conditions in Theorem C, proving the convergence in this
case.

Now consider the case of general pairs $(p,q).$ By the BM-property
of $\mu$ we have \[
\left\Vert s\right\Vert _{L^{\infty}(K)}\leq C_{\epsilon}e^{\epsilon k}\left\Vert s\right\Vert _{L^{p}(\mu,k\phi)}\leq C'_{\epsilon}e^{\epsilon'k}\left\Vert s\right\Vert _{L^{q}(\delta_{P_{k}},k\phi)},\]
 also using the assumption that $P_{k}$ has sub-exponential $L^{p}(K,\mu)-L^{q}(\delta_{P_{k}})$
distortion in the last inequality. Applying Jensen's inequality to
replace the latter $L^{q}-$norm with the correspondng $L^{\infty}$-norm then shows that $\delta_{P_{k}}$ has sub-exponential $L^{\infty}(K)-L^{\infty}(\delta_{P_{k}})$
distortion. But then the convergence follows from the first case considered
above.

Finally, by Proposition \ref{pro:dist for fekete} and H{\"o}lder's inequality
(applied twice) any Fekete sequence $\delta_{P_{Fek,k}}$ has $L^{p}(K,\mu)-L^{q}(\delta_{P_{Fek,k}})$
distortion at most $N_{k}=O(k^{n}).$ In particular, any sequence
$(P_{k})$ which \emph{minimizes} the latter distortion for each $k$
has sub-exponential such distortion. Hence, the convergence in the
optimal cases follows from the case considered above.

\subsection{Proof of Corollary D}

The sections $s_{1},...,s_{N}$ appearing in the construction of the
recursively extremal configuration $P=(x_{1},,,,x_{N})$ constitute
an orthononormal basis $S$ in $H^{0}(L).$ Moreover, by definition,
$x_{j}$ maximizes the Bergman distortion function $\rho^{\mathcal{H}_{j}}(x)$
of the sub-Hilbert space $\mathcal{H}_{j}$ and \[
(i)\,\rho^{\mathcal{H}_{j}}(x_{j})=\left|s_{j}(x_{j})\right|_{\phi}^{2},\,\,\,(ii)\, s_{i}(x_{j})=0,\, i<j\]
 Indeed, $(i)$ is a direct consequence of the extremal definition
\ref{equ:distortion} of the Bergman distortion function $\rho^{\mathcal{H}_{j}}$
of the space $\mathcal{H}_{j}.$ Then $(ii)$ follows from $(i)$
by expanding $\rho^{\mathcal{H}_{j}}$ in terms of the orthonormal
base $s_{1},...,s_{j}$ of $\mathcal{H}_{j}$ (using formula \ref{eq:rho in base})
and evaluating at $x_{j}.$

Now, by $(ii)$ above we have that the matrix $(s_{i}(x_{j}))$ is
triangular and hence\[
(\det S)(P):=\det(s_{i}(x_{j}))=s_{1}(x_{1})\cdots s_{N}(x_{N})\]
 Hence, $(i)$ gives that\[
\left|(\det S)(P)\right|_{\phi}^{2}=\rho^{\mathcal{H}_{1}}(x_{1})\cdots\rho^{\mathcal{H}_{N}}(x_{N})\]
 But since $x_{i}$ maximizes $\rho^{\mathcal{H}_{i}}(x)$ where $\int_{X}\rho^{\mathcal{H}_{i}}(x)d\mu=\dim\mathcal{H}_{i}=i$
it follows that $\rho^{\mathcal{H}_{i}}(x)\geq i.$ Thus, $\left|(\det S)(P)\right|_{\phi}^{2}/N!\geq1$
and replacing $P$ by $P_{k}$ then gives that $P_{k}$ is asymptotically
Fekete, i.e. \ref{eq:asymptotically fekete} holds. The corollary
now follows from Theorem $C.$

\end{document}